\newtheorem{Atheorem}{Theorem}[section]
\newtheorem{Acorollary}[Atheorem]{Corollary}
\newtheorem{Aproposition}[Atheorem]{Proposition}
\newtheorem{thm}{Theorem}[subsection]
\newtheorem{corollary}[thm]{Corollary}
\newtheorem{theorem}[thm]{Theorem}
\newtheorem{lemma}[thm]{Lemma}
\newtheorem{proposition}[thm]{Proposition}
\theoremstyle{definition}
\newtheorem{definition}[thm]{Definition}
\theoremstyle{remark}
\numberwithin{equation}{section}
\newcommand{\Z}{\mathbf{Z}}
\newcommand{\N}{\mathbf{N}}
\newcommand{\R}{\mathbf{R}}
\newcommand{\Q}{\mathbf{Q}}
\newcommand{\CB}{\textsc{cb}}
\newcommand{\Cp}{C_{p^{\infty}}}
\newcommand{\SA}{\mathcal{S}(A)}
\newcommand{\Cond}{\textsc{Cond}}
\newcommand{\ovCB}{\overline{\textsc{cb}}}
\newcommand{\Div}{\textnormal{Div}}
\newcommand{\Aut}{\textnormal{Aut}}
\newcommand{\GL}{\textnormal{GL}}
\newcommand{\Ker}{\text{Ker}}
\newcommand{\Hom}{\textnormal{Hom}}
\newcommand{\Crit}{\textnormal{cr}}
\newcommand{\Ncrit}{\textnormal{ncr}}
\newcommand{\Ord}{\textsc{Ord}}
\begin{document}

\title{The space of subgroups of an abelian group}

\author{Yves de Cornulier}
\address{IRMAR \\ Campus de Beaulieu \\
35042 Rennes Cedex, France}
\email{yves.decornulier@univ-rennes1.fr}

\author{Luc Guyot}

\address{Institut Fourier \\
Universit\'e de grenoble \\
B.P. 74 \\ 38402 Saint Martin d'H\`eres Cedex \\ France}
\email{Luc.Guyot@ujf-grenoble.fr}

\author{Wolfgang Pitsch}
\address{Universitat Aut\`onoma de Barcelona \\ Departament de Matem\`atiques\\
E-08193 Bellaterra, Spain}
\email{pitsch@mat.uab.es}

\thanks{The third author is supported  by MEC grant MTM2004-06686 and  by the program Ram\'on y Cajal, MEC, Spain}

\subjclass[2000]{Primary 20K27; Secondary 20K15, 20K45, 06E15}


\keywords{Abelian group, Cantor-Bendixson analysis} 
\date{July 14, 2010}
\maketitle

\begin{abstract}
We carry out the Cantor-Ben\-dix\-son analysis of the space of all
subgroups of any countable abelian group and we deduce a complete
classification of such spaces up to homeomorphism.
\end{abstract}

\section*{Introduction}
Let $G$ denote a discrete group. The set $\mathcal{N}(G)$ of all normal subgroups of $G$ has a natural topology, called the Chabauty topology.
It is the
setting of an interesting interplay between topological phenomena and
algebraic properties of the group $G$. Introduced by Chabauty in \cite{chb}, it reappeared
in the work of Gromov \cite{Grom81} and Grigorchuk \cite{Gri84}, where it proved to be a useful tool to understand asymptotic properties
of discrete groups, see for instance \cite{ChGu05,ABL+05}.
More precisely, consider the set of subsets of $G$, viewed as
the product $2^G$, endowed with the product (Tychonov) topology.
The subset $\mathcal{N}(G)$ is easily seen
to be closed. By construction, this is a compact, totally disconnected Hausdorff topological space, that is, a Boolean space. If $G$ is countable, then it is metrizable. For this topology, a net $(N_i)$ of normal subgroups converges to $N$ if and only if for every $g\in N$ (resp. $g\notin N$), eventually $g\in N_i$ (resp. $g\notin N_i$).

Very little is known on the global structure of $\mathcal{N}(G)$ of a group $G$, however see \cite{Cham00,CGP07}, which especially deal with the case when $G$ is free of finite rank. In this article we treat the
case where the group $G$ is abelian (not necessarily finitely generated) and we preferably write $A$ in
place of $G$. We obviously have $\mathcal{N}(A)=\SA$, the space of subgroups of $A$. This is not unrelated to finitely generated groups: indeed, if $A$ is any countable abelian group, then $A$ embeds into the center of a finitely generated group $G$, giving an obvious embedding of $\SA$ into $\mathcal{N}(G)$.
The
classification of the spaces $\mathcal{S}(A)$ for an abelian group
$A$ turns out to be much more tractable than both of its natural and
difficult generalizations, namely the classification of either
general Boolean spaces or general abelian groups, see for instance
\cite{Pie70} and \cite{Tho01} concerning these problems. Indeed, we
are able to provide a complete description of the spaces
$\mathcal{S}(A)$ in terms of natural, and computable, invariants of
the countable abelian group $A$.

A topological space $X$ is called {\it perfect} if it has no isolated point, and, at the other extreme, {\it scattered} if any non-empty subset has an isolated point. As an union of perfect subsets is perfect, every topological space has a unique largest perfect subset, called its {\it condensation part} and denoted $\Cond(X)$, which is empty if and only if $X$ is scattered. The subset $X-\Cond(X)$ is the largest scattered open subset, and is called the {\it scattered part} of $X$.

If $A$ is an abelian group, its torsion elements form a subgroup denoted by $T_A$. Recall that an element of $A$ is called divisible if it belongs to $nA$ for all non-zero integers $n$. The set of divisible elements in $A$ form a subgroup denoted by $\Div(A)$ and it is easy to check that it always has a direct complement in $A$.
Given a prime $p$, we define $\Cp=\Z[1/p]/\Z$; this is called a quasi-cyclic group. An abelian group is called \textit{Artinian} if every non-increasing sequence of subgroups stabilizes; every such group is isomorphic to a direct sum $A=\bigoplus_{i=1}^hC_{p_i^\infty}\oplus F$ for some finite subgroup $F$; the finite index subgroup $\bigoplus_{i=1}^hC_{p_i^\infty}$ coincides with $\Div(A)$.
An abelian group $A$ is called \textit{minimax} if it has a finitely generated subgroup $Z$ with $A/Z$ Artinian. Such a subgroup $Z$ is called a \textit{lattice} in $A$. 


\begin{Aproposition}[Corollaries \ref{cor:existisolated} and \ref{cor:cantor}] \label{PropNonNA}
Let $A$ be an abelian group. Then $\SA$ is non-perfect (i.e. contains an isolated point) if and
only if $A$ is minimax. In particular,
\begin{itemize}\item if $A$ is countable and not                 
minimax, then $\SA$ is homeomorphic to a              
Cantor set;
\item if $A$ is uncountable, then $\SA$ is perfect.\end{itemize}     
\end{Aproposition}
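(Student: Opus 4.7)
The plan is to prove the stronger biconditional that $\SA$ contains an isolated point if and only if $A$ is minimax, and then extract the two bulleted claims. I would factor the argument through the following algebraic lemma as the technical heart: \emph{an abelian group $G$ has $\{0\}$ isolated in $\mathcal{S}(G)$ if and only if $G$ is Artinian}. The easy direction uses that an Artinian abelian group has finite socle and satisfies the descending chain condition, so every nontrivial subgroup contains a nonzero socle element; the nonzero socle elements then form a finite isolating set for $0$.

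Granting the lemma, both directions of the proposition are short. If $A$ is minimax with lattice $Z$, a finite generating set $F_1$ of $Z$ together with a lift $F_2\subset A$ of a detector set for $0$ in $A/Z$ carves out a basic neighborhood of $Z$ whose only point is $Z$ itself: any $B$ in it must contain $Z=\langle F_1\rangle$ and project to a subgroup of $A/Z$ avoiding the detector, hence project to $0$. Conversely, if $B$ is isolated by finite data $F_1,F_2$, the subgroup $\langle F_1\rangle$ already lies in the isolating neighborhood (being inside $B$, hence disjoint from $F_2$), which forces $B=\langle F_1\rangle$, so $B$ is finitely generated. Every nontrivial subgroup of $A/B$ then lifts to a subgroup of $A$ strictly containing $B$, which by isolation must meet $F_2$; thus the image $\bar F_2$ of $F_2$ in $A/B$ detects $0$ in $\mathcal{S}(A/B)$, and the lemma yields $A/B$ Artinian, making $A$ minimax with lattice $B$.

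The bulk of the work is the nontrivial direction of the lemma. I would proceed by structural reduction on $G$ given a finite detector $F\subset G\setminus\{0\}$. First, $G$ must be torsion, since for $g$ of infinite order $n\langle g\rangle$ is nontrivial and eventually disjoint from $F$. Second, $G=\bigoplus_p G_p$ has finite support, since infinitely many primes would contribute pairwise disjoint minimal subgroups that $F$ cannot all meet. Third, each $G_p$ inherits the property via $F\cap G_p$, reducing to $G$ a $p$-group. Writing $G=\Div(G)\oplus R$ with $R$ reduced, the divisible summand must be a finite sum of copies of $C_{p^\infty}$, else its socle is already infinite. The step I expect to be the main obstacle is the last one: showing the reduced $p$-group $R$ with finite socle is finite. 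The clean route is to exploit that every nonzero element of a reduced $p$-group has finite height, combine this with pigeonhole on the finite socle to cap element orders, conclude $R$ has bounded exponent, apply Pr\"ufer's theorem to write $R$ as a direct sum of cyclic groups, and use the finite socle to conclude finitely many summands. Reassembling across primes gives $G$ Artinian.

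Finally, the two bullets drop out of the main equivalence. A countable non-minimax $A$ makes $\SA$ a non-empty, compact, metrizable, totally disconnected, perfect space, which by Brouwer's characterization is homeomorphic to the Cantor set. And every minimax abelian group is countable (a finitely generated lattice and an Artinian quotient are each countable), so an uncountable $A$ is never minimax, and $\SA$ is therefore perfect.
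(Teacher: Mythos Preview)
Your overall architecture matches the paper's: reduce the proposition to the characterization of isolated points in $\SA$ (the paper quotes from \cite{CGP07} that $S$ is isolated iff $S$ is finitely generated with $A/S$ Artinian; your lemma is the case $S=0$, and your two short paragraphs correctly recover the general case from it), and then read off both bullets via Brouwer's theorem and the countability of minimax groups. Those reductions, and the first structural steps of your lemma (torsion, finitely many primes, pass to a single $p$, bound the divisible part), are all sound.

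The gap is in your last step. The assertion that every nonzero element of a reduced $p$-group has finite height is false: ``reduced'' only rules out nonzero divisible subgroups, whereas ``no nonzero element of infinite height'' is the strictly stronger condition $\bigcap_n p^nR=0$. A concrete reduced $p$-group with an element of infinite height is $\langle c_0,c_1,c_2,\dots\mid pc_0=0,\ p^nc_n=c_0\ (n\ge 1)\rangle$: the quotient by $\langle c_0\rangle$ is $\bigoplus_{n\ge 1}\Z/p^n\Z$, which is reduced, forcing any divisible subgroup into $\langle c_0\rangle\cong\Z/p\Z$ and hence to zero --- yet $c_0=p^nc_n$ for all $n$. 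Your pigeonhole (bounding orders by one more than the maximal height of a socle element) therefore lacks the input it needs, and ``reduced'' alone does not supply it. The clean fix bypasses heights entirely: once the socle $G[p]$ is finite (which your detector argument does give), use that $G[p]$ is essential in the $p$-group $G$, so $G$ embeds in the injective hull of $G[p]\cong(\Z/p\Z)^d$, namely $\Cp^{\,d}$; this is Artinian, hence so is $G$, and you never need to split off the reduced part at all.
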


When the abelian group $A$ is uncountable, we do not have\footnote{Added after publication: A result due to Tsybenko \cite{Tsy86} contains and improves Proposition~A, showing in addition that if the countable    
discrete abelian group $G$ has cardinality $\aleph_1$, then $\mathcal{S}(G)$     
is homeomorphic to $2^{\aleph_1}$, and conversely if $G$ has cardinality        
$>\aleph_1$, then $\mathcal{S}(G)$ is {\em not} dyadic, that is, it is not a    
continuous image of a Cantor cube $2^\tau$ for any cardinal $\tau$.   } any classification result for the perfect space $\SA$, except the following, which shows that the cardinality of $A$ can be read out of the topology of $\SA$.

\begin{Aproposition}[see Paragraph \ref{subs:uncount}]
Suppose that the abelian group $A$ has uncountable cardinal $\alpha$. 
Then $\SA$ contains a subset homeomorphic to $2^\alpha$. Accordingly, $\alpha$ is the least cardinality of a basis for the topology of $\SA$.
In particular, if $A$ is uncountable, then $\SA$ is not metrizable.
\label{uncountable}
\end{Aproposition}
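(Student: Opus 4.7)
The plan is to topologically embed $2^\alpha$ into $\SA$; everything else will follow from topological weight considerations. The natural construction is to locate an independent family of $\alpha$ many nonzero elements in $A$ and send each subset of the index set to the direct sum of the corresponding cyclic subgroups.

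First I would establish that $A$ contains an independent family $\{x_i\}_{i\in I}$ of nonzero elements with $|I|=\alpha$, meaning the natural map $\bigoplus_{i\in I}\langle x_i\rangle\to A$ is injective. This reduces to checking that the rank $r(A)$ equals $\alpha$. For this, embed $A$ into its divisible hull
\[
D(A)\cong\Q^{(J)}\oplus\bigoplus_{p}\Cp^{(I_p)}.
\]
The rank of $D(A)$ equals $|J|+\sum_{p}|I_p|$, which coincides with $|D(A)|$ whenever the latter is uncountable. Since $A\hookrightarrow D(A)$ is essential, one checks that $r(A)=r(D(A))$: the torsion-free rank is preserved after tensoring with $\Q$, and essentiality forces $A[p]=D(A)[p]$ for every prime $p$. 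Combining with $|A|\leq|D(A)|$ gives $r(A)\geq|A|=\alpha$, so a maximal independent family provides the required $\{x_i\}_{i\in I}$.

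Next, define $\phi\colon 2^I\to\SA$ by $\phi(J)=\bigoplus_{i\in J}\langle x_i\rangle$. Injectivity is immediate: if $i\in J\setminus J'$ then, by independence, $x_i$ lies in $\phi(J)$ but not in $\phi(J')$. For continuity it suffices to see that for each $a\in A$ the set $\{J\in 2^I:a\in\phi(J)\}$ and its complement are open in $2^I$. Indeed, if $a\notin\bigoplus_{i\in I}\langle x_i\rangle$ the former is empty; otherwise $a$ admits a unique finite-support expression $a=\sum_{i\in F(a)}m_ix_i$, and $a\in\phi(J)$ if and only if $F(a)\subseteq J$, a cylinder condition depending on finitely many coordinates.

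Since $\phi$ is a continuous injection from the compact space $2^I$ into the Hausdorff space $\SA$, it is a homeomorphism onto its image, proving the first claim. For the second, $2^\alpha$ has topological weight $\alpha$, so $\SA$ has weight at least $\alpha$; conversely $\SA$ sits inside $2^A$ with $|A|=\alpha$, giving weight at most $\alpha$. Hence the weight equals $\alpha$ exactly, and when $\alpha>\aleph_0$ the space $\SA$ is not second countable, and in particular not metrizable, being compact. The main obstacle is the first step: producing an independent family of cardinality $|A|$ rests essentially on the structure theorem for divisible abelian groups and on the preservation of rank under essential extensions.
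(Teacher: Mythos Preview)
Your proof is correct and follows the same overall strategy as the paper: locate an independent family of cardinality $\alpha$ inside $A$, then send $J\subseteq I$ to $\bigoplus_{i\in J}\langle x_i\rangle$ and observe this is a continuous injection from a compact space to a Hausdorff space. The difference lies in how the independent family is produced. The paper argues by a direct dichotomy: either $A/T_A$ has cardinality $\alpha$, giving a copy of $\Z^{(\alpha)}$ inside $A$; or else $T_A$ has cardinality $\alpha$, in which case the socle $\bigoplus_p A[p]$ already supplies $\alpha$ independent elements of prime order. Your route through the divisible hull and the structure theorem for divisible groups is more uniform (no case split) and makes the equality of total rank and cardinality transparent, at the cost of invoking heavier machinery. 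Both yield the same embedding, and your treatment of continuity (via the well-defined finite support $F(a)$) is in fact more explicit than the paper's. One notational caution: in this paper $r(A)$ denotes the \emph{torsion-free} rank, whereas you are using it for the total rank (torsion-free rank plus $p$-ranks); you should rename your invariant to avoid a clash.
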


Our main result is the determination of the homeomorphism type of $\SA$ when $A$ is a countable abelian group. Proposition \ref{PropNonNA} settles the case of non-minimax ones.

Recall that the \textit{rank} of an abelian group $A$, denoted $r(A)$, is the largest cardinal of $\Z$-free family in $A$; this is also the $\Q$-dimension of the vector space $A\otimes_\Z\Q$. If $A$ is minimax, then $r(A)<\infty$.
We also have to introduce the notion of critical prime of a minimax abelian group, which plays a crucial role here.
If $A$ is a minimax abelian group and $p$ a prime, we define $\ell_p(A)$ as the largest integer $k$ so that $A$ maps onto $\Cp^k$. Given a lattice $Z$ in $A$, this is also the greatest integer $k$ so that $\Cp^k$ embeds into $A/Z$. The sum $$h(A)=r(A)+\sum_{p\text{ prime}}\ell_p(A)$$ is finite and called the \textit{height} of $A$. 
A prime $p$ is called \textit{critical} for $A$ if $\ell_p(A)\ge 2$. The set of critical, respectively non-critical, primes for $A$ is denoted by $\Crit(A)$, resp. $\Ncrit(A)$. The minimax abelian group $A$ is called \textit{critical} if $\Crit(A)\neq\emptyset$.
Finally, if $A$ is a minimax abelian group, $\Div(A)$ is contained in $T_A$ as a subgroup of finite index; the number of subgroups of the finite group $T_A/\Div(A)$ is denoted by $n(A)$.


Let $[n]$ denote the set $\{0,\dots,n-1\}$ with $n$ elements, and $\omega=\bigcup_n[n]$. Let $D$ denote the topological space $\omega\cup\{\infty\}$ consisting of the discrete sequence of points $(n)_{n\ge 0}$ converging to the limit $\infty$, which is homeomorphic to the subset $\{1/n|\,n\ge 1\}\cup\{0\}$ of $\R$. For any integer $m$, the space $D^m$ is scattered, since $D$ is scattered and the class of scattered spaces is closed under finite cartesian products.

\begin{Atheorem}[Theorem \ref{thm:nonscat}] \label{ThScat}
Let $A$ be a non-critical minimax abelian group, and write $h=h(A)$, $n=n(A)$. Then the space $\mathcal{S}(A)$ is countable, and homeomorphic to $D^{h}\times [n]$.
\end{Atheorem}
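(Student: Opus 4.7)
The plan is to apply the Mazurkiewicz--Sierpi\'nski classification: any countable compact Hausdorff space is automatically scattered and pinned down up to homeomorphism by the pair (Cantor--Bendixson rank, cardinality of top derived set). A straightforward check gives that $D^h\times [n]$ has CB-rank $h$ and exactly $n$ top-rank points $(\infty,\dots,\infty,i)$ for $i<n$. So, modulo automatic compactness (as $\SA\subseteq 2^A$ is closed) and metrizability (as $A$ is countable), the theorem reduces to three claims: (a) $|\SA|\leq\aleph_0$; (b) the CB-rank of $\SA$ equals $h=h(A)$; (c) the $h$-th derived set of $\SA$ has exactly $n=n(A)$ points.

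For (a), fix a finitely generated lattice $Z\leq A$; non-criticality yields $A/Z\cong\bigoplus_{p\in S}\Cp\oplus F$ with $S$ and $F$ finite. Any $H\leq A$ is determined by the triple $(H\cap Z,\,(H+Z)/Z,\,\sigma)$ where $\sigma$ is the section data realising $H$ as an extension, and each component varies in a countable set (the finitely generated $Z$ and the non-critical Artinian $A/Z$ both have countably many subgroups, and given the first two, the section data form a torsor over a countable $\Hom$-group). For (b) and (c) I proceed by induction on $h$: the base $h=0$ forces $A$ finite, so $\SA=[n(A)]$ is discrete; for the inductive step I would (i) identify the isolated points of $\SA$ as the ``finitely pinned'' subgroups---those $H\leq A$ determined, as points of $2^A$, by finitely many membership/non-membership conditions---and (ii) show that the first CB-derivative $\SA'$ again has top derived set consisting of the subgroups $H$ with $\Div(A)\subseteq H\subseteq T_A$ (in bijection with $\mathcal{S}(T_A/\Div(A))$, hence $n(A)$ in number) and total CB-rank $h-1$. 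The small cases $\Z$, $\Cp$, $\Z[1/p]$, and $\Z\oplus\Z/p$ all corroborate this prediction, after which Mazurkiewicz--Sierpi\'nski produces $\SA\cong D^h\times [n]$.

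The central obstacle is step (i)---pinning down the isolated subgroups---and then unpacking (ii). Via the lattice $Z$, isolation should translate into the combined condition that $H\cap Z$ has finite index in $Z$ \emph{and} $(H+Z)/Z$ is a finite subgroup of $A/Z$. The non-criticality hypothesis $\Crit(A)=\emptyset$ is indispensable here: when some $\ell_p\geq 2$, the subgroups of $\Cp^{\ell_p}$ form an uncountable family (e.g., via diagonal copies of $\Cp$ parametrized by $\mathbf{Z}_p$ when $\ell_p=2$), producing a condensation set in $\SA$ that would break both scatteredness and countability. Once (i) and (ii) are secured, iteratively peeling off one CB-layer leaves exactly the subgroups between $\Div(A)$ and $T_A$ at level $h$, giving the count $n$ and completing the induction.
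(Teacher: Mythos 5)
Your endgame---reduce to the Mazurkiewicz--Sierpi\'nski classification (Proposition \ref{mazur}) by computing the Cantor--Bendixson rank and counting the top-rank points---is exactly the paper's, and your countability argument (a), via a lattice $Z$ and the countable subgroup structure of $Z$ and of the non-critical Artinian quotient $A/Z$, is a workable direct route (the paper instead gets countability for free from the homeomorphism with $D^h\times[n]$). The gap is in (b) and (c), and it is one you flag yourself without closing. The induction on $h$ is not well-founded as stated: the derived space $\SA^{(1)}$ is not of the form $\mathcal{S}(B)$ for an abelian group $B$ of height $h-1$, so the inductive hypothesis cannot be applied to it, and you give no mechanism for identifying the isolated points of $\SA^{(1)},\SA^{(2)},\dots$ beyond the first layer. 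Your step (i) (isolated $\Leftrightarrow$ $H\cap Z$ of finite index in $Z$ and $(H+Z)/Z$ finite, i.e.\ $H$ finitely generated with $A/H$ Artinian) is a correct but already nontrivial statement---the paper imports it from \cite{CGP07}---and it only peels off one layer; checking $\Z$, $\Cp$, $\Z[1/p]$ and $\Z\oplus\Z/p$ does not substitute for the general step (ii).

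What is missing, and what the paper supplies, is a pointwise rank formula valid at every subgroup simultaneously: the weight $w_A(S)=r(A/S)+\ell(S)+\kappa(A/S)$ is shown to be \emph{strictly upper semi-continuous} on $\SA$ (Corollary \ref{noncritssci}, where non-criticality enters through the notion of idle subgroups: nearby subgroups of equal weight would have to be parallel but non-commensurable, which only critical primes permit) and \emph{strictly hereditary} (Proposition \ref{PropHeredity0}: every neighbourhood of $S$ with $w_A(S)\ge 1$ contains subgroups $H$ of weight $w_A(S)-1$, built explicitly by commensurable convergence, e.g.\ $S=\lim_k(k!\,Q\oplus S)$ or by deleting a $\Cp$ summand). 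Lemma \ref{carcb} then gives $\CB_{\SA}(S)=w_A(S)$ for all $S$, with maximum $h(A)$ attained exactly at the $n(A)$ subgroups between $\Div(A)$ and $T_A$. Any completed version of your argument would have to prove both halves of this estimate (semi-continuity for the upper bound, a heredity construction for the lower bound); neither appears in your outline, so as it stands the proposal is a correct reduction plus a plan, not a proof.
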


All the cases occur (for $h\ge 0$ and $n\ge 1$), for instance the abelian group $A=\Z^h\oplus\Z/2^{n-1}\Z$ has $h(A)=h$ and $n(A)=n$, and as a finitely generated group, it is minimax and non-critical.

Let us now deal with critical minimax abelian groups. Again, we have to introduce some more definitions.
Let $A$ be a minimax abelian group and let $V$ be a set of primes. First define $$\ell_V(A)=\sum_{p\in V}\ell_p(A).$$
Let $R_V\subset\Q$ denote the ring of rationals whose denominator is divisible by no $p\in V$. Let $U_V(A)\le A$ be the intersection of kernels of homomorphisms $A\to R_V$. Set $a_V(A)=r(U_V(A))$ and $\gamma_{V^c}(A)=r(A/U_V(A))$. Note that $A/U_V(A)$ embeds into $R_V^{\gamma_{V^c}(A)}$.

On the other hand, let $W$ be the ``dusty Cantor space"; namely a compact metrizable space consisting of the union of a Cantor space with an open dense countable discrete set. It is a consequence of Pierce's theorem (Theorem \ref{thpierce}) that $W$ is thus uniquely defined, up to homeomorphism.
For instance, $W$ is chosen as the union of the triadic Cantor set (which is the image of $\{0,2\}^\N$ in $\R$ by the injective continuous map $u\mapsto\sum_{n\ge 0} u(n)3^{-n-1}$) and the set of centers of all intervals in the complement, namely the set of reals of the form $\sum_{n\ge 0}v(n)3^{-n-1}$, where $v(n)$ is a sequence such that for some $n_0\ge 0$, $v(n)\in\{0,2\}$ for all $n<n_0$ and $v(n)=1$ for all $n\ge n_0$. 

\begin{Atheorem}[Theorem \ref{thm:nonscat}]\label{thm:nonscatA}
Let $A$ be a critical minimax abelian group. Then $\SA$ is uncountable, and homeomorphic to $D^\sigma\times W$, where $\sigma=\sigma(A)$ is defined as follows
\begin{eqnarray*}
\sigma(A) & = & \gamma_{\Ncrit(A)}(A)+\ell_{\Ncrit(A)}(A) \\
& = &  h(A)-(a_{\Crit(A)}(A)+\ell_{\Crit(A)}(A)).
\end{eqnarray*}
\end{Atheorem}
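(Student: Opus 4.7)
The plan is to derive Theorem \ref{thm:nonscatA} by combining a Cantor--Bendixson analysis of $\SA$ with Pierce's Theorem \ref{thpierce}, which provides uniqueness of homeomorphism type. Since $A$ is countable, $\SA$ is a compact metrizable Boolean space; to prove $\SA \cong D^\sigma \times W$ it suffices to show that $\Cond(\SA)$ is a Cantor set and that the scattered part $\SA \setminus \Cond(\SA)$ has the same Cantor--Bendixson invariants as the scattered part of $D^\sigma \times W$.

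First, I would use the decomposition of $A$ along $V = \Crit(A)$ and $V^c = \Ncrit(A)$ suggested by the very definition of $\sigma$. The subgroup $U_V(A)$ and the quotient $A/U_V(A)$ play complementary roles: $A/U_V(A)$ embeds in $R_V^{\gamma_{V^c}(A)}$ and is thus entirely governed by non-critical primes, while $U_V(A)$ captures the critical information. The additive splitting of invariants $h(A) = (a_V(A)+\ell_V(A)) + \sigma$ with $\sigma = \gamma_{V^c}(A)+\ell_{V^c}(A)$ is precisely what the theorem's two expressions for $\sigma$ encode. Even though $\SA$ is not literally a product of subgroup spaces (because of the "graph subgroups" that mix the two summands), I would expect the Cantor--Bendixson analysis to respect this splitting additively, with the non-critical summand contributing the $D^\sigma$ factor via Theorem \ref{ThScat} and the critical summand contributing the $W$ factor.

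Next, I would exhibit a Cantor set inside $\SA$. Since $A$ is critical there exists $p \in V$ with $\ell_p(A) \ge 2$, so $A$ surjects onto $\Cp^2$ and pulling back subgroups yields an embedding $\mathcal{S}(\Cp^2) \hookrightarrow \SA$. One verifies directly that $\mathcal{S}(\Cp^2)$ contains a Cantor subset (the subgroups isomorphic to $\Cp$ inside $\Cp^2$ form a projective family homeomorphic to a Cantor set). Hence $\Cond(\SA)$ is nonempty; being compact, metrizable, totally disconnected, and by definition having no isolated points, it is homeomorphic to the Cantor set.

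The decisive step is to match the scattered part to that of $D^\sigma \times W$. The local dichotomy one must establish is that $B \le A$ is a condensation point iff some small perturbation $B'$ of $B$ admits a critical $\Cp^2$-subquotient in $A/B'$; otherwise $B$ has a scattered open neighborhood whose type is governed only by non-critical invariants, so Theorem \ref{ThScat} applied locally shows this neighborhood contributes to a $D^\sigma$ factor. The main obstacle will be the careful bookkeeping that shows the Cantor--Bendixson rank of the scattered part is exactly $\sigma$ and not a larger combination of height invariants: the critical CB-directions are absorbed into the Cantor kernel, which is precisely the content of the identity $\sigma = h(A) - (a_V(A)+\ell_V(A))$. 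Once $\Cond(\SA)$ is identified as a Cantor set and the scattered part's Cantor--Bendixson invariants match those of $D^\sigma \times W$, Pierce's theorem delivers the homeomorphism $\SA \cong D^\sigma \times W$.
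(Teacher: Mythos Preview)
Your proposal has a genuine gap in the application of Pierce's theorem. You write that once $\Cond(\SA)$ is identified as a Cantor set and the scattered part's Cantor--Bendixson invariants match those of $D^\sigma\times W$, Pierce delivers the homeomorphism. But Pierce's theorem (Theorem \ref{thpierce}) requires more: the homeomorphism $\phi:\Cond(\SA)\to\Cond(D^\sigma\times W)$ must preserve the \emph{extended} Cantor--Bendixson rank, i.e.\ the function $\ovCB$ restricted to the condensation part. Both condensation parts being Cantor sets is trivially not enough: any two Cantor sets are homeomorphic, yet the extended rank can take any profile of values between $0$ and $\sigma$ on them. What the paper actually proves (Propositions \ref{cperfect} and \ref{emptint}, via Lemma \ref{LemOmegaMuK}) is that the level sets $C_i=\{S\in\Cond(\SA):\ovCB(S)\ge i\}$ form a nested chain $C_0\supset C_1\supset\cdots\supset C_\sigma$ of Cantor sets with $C_{i+1}$ of empty interior in $C_i$; only then does a rank-preserving homeomorphism to $K\times D^\sigma$ exist. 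Your sketch never establishes this filtration structure.

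The missing ingredient is substantial and is exactly where the paper's work lies. One needs (a) an algebraic identification of $\Cond(\SA)$ and of $\ovCB$ on it---the paper shows $\Cond(\SA)=\{\lambda_A\ge 1\}$ and $\ovCB=d_A$ via Lemma \ref{carcb}---and (b) that for every $S$ with $\lambda_A(S)\ge 1$ and every $n\le d_A(S)$ there exist subgroups arbitrarily close to $S$ with $\lambda_A=1$ and $d_A=n$ (Lemmas \ref{lambda1more} and \ref{LemLambda1}). Your informal criterion ``some perturbation admits a critical $\Cp^2$-subquotient'' gestures at $\lambda_A\ge 1$, but the delicate part is controlling $d_A$ simultaneously, which requires the commensurable-convergence lemmas of Sections \ref{wei} and \ref{lwei}. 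Without that control you cannot verify hypothesis (ii) of Proposition \ref{PropHomeo}, and the appeal to Pierce fails.
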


Again, all the cases occur: the minimax group ${(C_{2^\infty})}^2\times\Z^\sigma$ is critical, has $\sigma(A)=\sigma$. Even better, for a minimax group $A$ with given $h(A)$, if $\SA$ is uncountable, then $0\le \sigma(A)\le h(A)-2$, and all these cases occur, taking $A={(C_{2^\infty})}^{h-\sigma}\times\Z^\sigma$, we have $h(A)=h$ and $\sigma(A)=\sigma$ provided $0\le\sigma\le h-2$.

From the conjonction of Theorems \ref{ThScat} and \ref{thm:nonscatA}, we get the following corollaries.

\begin{Acorollary}[Boyer \cite{Boy56}]\label{sacount}
Let $A$ be an abelian group. Then $\SA$ is countable if and only if $A$ is a non-critical minimax group.
\end{Acorollary}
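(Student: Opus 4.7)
The plan is to assemble the corollary as an immediate consequence of the preceding theorems; the real content lies in verifying that the case analysis is exhaustive. The key preliminary observation is that every minimax abelian group is automatically countable: it is an extension of a finitely generated abelian group by an Artinian one, and by the structural description recalled in the excerpt an Artinian abelian group is a finite extension of a finite direct sum of quasi-cyclic groups, hence countable. Consequently, any uncountable abelian group is automatically non-minimax, so the four-way split (minimax vs.\ not, critical vs.\ not, countable vs.\ not) collapses to three cases.

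For the forward direction, if $A$ is a non-critical minimax group, Theorem \ref{ThScat} identifies $\SA$ with $D^{h(A)}\times[n(A)]$; since $D$ is countable and $h(A), n(A)$ are finite, this is a finite product of countable spaces and therefore countable.

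For the converse, I would show that each of the remaining cases produces an uncountable $\SA$. If $A$ is critical minimax, Theorem \ref{thm:nonscatA} gives $\SA\cong D^{\sigma(A)}\times W$, and the dusty Cantor space $W$ contains a Cantor set as a closed subspace, so $\SA$ is uncountable. If $A$ is not minimax, Proposition \ref{PropNonNA} splits into two subcases: when $A$ is countable, $\SA$ is homeomorphic to the Cantor set, which is uncountable; when $A$ is uncountable, Proposition \ref{uncountable} embeds $2^{|A|}$ into $\SA$, which is again uncountable.

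There is no genuine obstacle here beyond bookkeeping: the only point that needs a word of justification is the reduction of the uncountable-minimax "case" to the empty set via countability of minimax groups, after which the conclusion is a direct citation of Theorems \ref{ThScat} and \ref{thm:nonscatA} together with Propositions \ref{PropNonNA} and \ref{uncountable}.
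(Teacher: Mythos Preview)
Your argument is correct and matches the paper's approach: the corollary is presented there as an immediate consequence of Theorems~\ref{ThScat} and~\ref{thm:nonscatA} (the paper's one-line remark singles out Theorem~\ref{ThScat} as supplying the harder implication), with Propositions~\ref{PropNonNA} and~\ref{uncountable} implicitly handling the non-minimax case. Your write-up is in fact more careful than the paper's brief note, since you make explicit the observation that minimax groups are countable and you spell out the full case split, including the uncountable case.
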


Note that the harder implication, namely the forward one, follows from Theorem \ref{ThScat}.

\begin{Acorollary}[see Lemma \ref{nonhomeo}] \label{CorClass}
Let $A,B$ be countable abelian groups. The spaces $\mathcal{S}(A)$
and $\mathcal{S}(B)$ are homeomorphic if and only if one of the
following holds:
\begin{itemize}
\item[$(i)$] $h(A)=h(B)=\infty$;
\item[$(ii)$] both $A$ and
$B$ are minimax and non-critical, $h(A)=h(B)$, and $n(A)=n(B)$;
\item[$(iii)$] both $A$ and $B$ are minimax and critical, and $\sigma(A)=\sigma(B)$.
\end{itemize}
\end{Acorollary}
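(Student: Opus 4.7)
The ``if'' direction is immediate from the results already stated in the introduction. In case (i) both $A$ and $B$ are non-minimax, so by Proposition~\ref{PropNonNA} each of $\SA$ and $\mathcal{S}(B)$ is homeomorphic to a Cantor set. In case (ii) Theorem~\ref{ThScat} gives $\SA \cong D^{h(A)}\times[n(A)] = D^{h(B)}\times[n(B)] \cong \mathcal{S}(B)$, and in case (iii) Theorem~\ref{thm:nonscatA} gives $\SA \cong D^{\sigma(A)}\times W \cong \mathcal{S}(B)$.

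For the converse, the plan is to detect both the relevant case and the numerical invariants from the topology of $\SA$ alone. Two topological dichotomies pin down the case. First, by Corollary~\ref{sacount}, $\SA$ is countable exactly in case (ii), so countability separates case (ii) from the other two. Second, in case (i) the space is a Cantor set, hence perfect, whereas in case (iii) the space $D^\sigma\times W$ has isolated points coming from the dense discrete part of $W$; thus perfectness separates (i) from (iii). Combining these two observations, the case in which $A$ lies is a topological invariant of $\SA$.

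Once the case is identified, the numerical invariants are extracted by a Cantor-Bendixson analysis. For case (ii), an easy induction on $h$ shows that $D^h$ has Cantor-Bendixson rank $h$ with a unique point $(\infty,\dots,\infty)$ at that rank, and so $D^h\times[n]$ has CB rank $h$ with exactly $n$ points of rank $h$; this recovers $h(A)$ and $n(A)$, forcing them to equal $h(B)$ and $n(B)$. For case (iii), write $W=K\cup\Delta$ with $K$ a Cantor set and $\Delta$ its dense discrete open complement. Then $D^\sigma\times K$ is perfect, while $D^\sigma\times\Delta$ is a countable topological disjoint union of copies of $D^\sigma$, open and scattered in $D^\sigma\times W$. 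The only genuinely non-routine step is the reverse inclusion $\Cond(D^\sigma\times W)\subseteq D^\sigma\times K$: if a perfect subset $P$ met the scattered open set $D^\sigma\times\Delta$, then $P\cap(D^\sigma\times\Delta)$ would be a non-empty scattered open subset of $P$, and any point isolated in it would also be isolated in $P$, a contradiction. So the scattered part $X\setminus\Cond(X)$ of $D^\sigma\times W$ is exactly $D^\sigma\times\Delta$, of CB rank $\sigma$, and this recovers $\sigma(A)$, giving $\sigma(A)=\sigma(B)$.

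The main obstacle is thus not the extraction of the invariants, which is a routine Cantor-Bendixson computation, but the very identification of the case; this relies entirely on the earlier Theorems~\ref{ThScat} and~\ref{thm:nonscatA} and Proposition~\ref{PropNonNA}, combined with the topological dichotomies countable/uncountable and perfect/non-perfect.
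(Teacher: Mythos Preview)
Your proposal is correct and follows essentially the same route as the paper's Lemma~\ref{nonhomeo}: separate the three families by the dichotomies countable/uncountable and perfect/non-perfect, then recover the numerical invariants from the Cantor--Bendixson data. One terminological slip worth fixing: under the paper's convention (first $\alpha$ with $X^{(\alpha)}$ perfect), the Cantor--Bendixson rank of $D^h$ and of $D^\sigma\times W$ is $h+1$ and $\sigma+1$ respectively, not $h$ and $\sigma$; this is harmless for the argument since the invariant is recovered either way.
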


Next, we answer the question ``where" does a given subgroup $S$ of $A$ lie in $\SA$.

Let $X$ be a topological space. Let $X^{(1)}$ be its derived subspace, that is the set of its accumulation points, i.e. non-isolated points. Define by induction $X=X^{(0)}$ and $X^{(n+1)}=(X^{(n)})^{(1)}$. If $x\in X^{(n)}-X^{(n+1)}$, we say that the Cantor-Bendixson rank\footnote{We are avoiding the use of ordinals in this introduction. See Paragraph \ref{subs:cba} for the extension of the Cantor-Bendixson as an ordinal-valued function defined on all the scattered part of $X$.} of $x$ in $X$ is $n$ and we write it $\CB_X(x)=n$. Note that $\Cond(X)\subset X^{(n)}$ for all $n$.
If $X$ is a metrizable Boolean space, it can be checked that the Cantor-Bendixson rank of $x$ in $X$ is the integer $n$ if and only if there exists a homeomorphism of a neighbourhood of $x$ (which can be chosen clopen) to $D^n$ mapping $x$ to $(\infty,\dots,\infty)$.
If there is some $n$ such that $\CB_X(x) \le n$ whenever $\CB_X(x)$ is defined \footnote{This is the case if $X=\SA$ with $A$ an arbitrary abelian group, as a consequence of Proposition \ref{PropNonNA} and Theorems \ref{ThScat} and \ref{thm:nonscatA}.},  we also define for any $x\in X$, its \textit{extended Cantor-Bendixson rank} as
$$\ovCB_X(x)=\inf\sup\{\CB_X(y)|\,y\in V,\CB_X(y)<\infty\},$$
where the infimum ranges over all neighbourhoods of $x$. This extends the function $\CB_X$ (assuming $\sup\emptyset=-\infty$).

For a minimax abelian group $A$ and a prime $p$, define $$\kappa_p(A)=\ell_p(A/T_A) \text{ and } \tau_V(A)=\ell_p(T_A);$$
and, for a set of primes $V$,
$$\kappa_V(A)=\sum_p\kappa_p(A) \text{ and } \tau_V(A)=\sum_p\tau_p(A).$$
If $S\le A$, define
\begin{eqnarray*}
d_A(S)& = &  \gamma_{\Ncrit(A)}(A/S)+\kappa_{\Ncrit(A)}(A/S)+\ell_{\Ncrit(A)}(S) \\
 & = & \sigma(A)- (\tau_{\Ncrit}(A/S)+\gamma_{\Ncrit(A)}(A)-\gamma_{\Ncrit(A)}(A/S))
\end{eqnarray*}

\begin{Atheorem}[Theorem \ref{thm:nonscat}] \label{ThFormula}
Let $A$ be a minimax abelian group and let $S \le
A$. 
We have
\begin{itemize}
\item[$(i)$] $S$ is in the scattered part of $\SA$ if and only if $$\kappa_{\Crit(A)}(A/S)=\ell_{\Crit(A)}(S)=0;$$
\item[$(ii)$] the extended Cantor-Bendixson rank of $S$ in $\SA$ is $d_A(S)$.
\end{itemize}
\end{Atheorem}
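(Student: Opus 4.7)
Since Theorem \ref{ThFormula} appears as part of the same result (Theorem \ref{thm:nonscat}) as Theorems \ref{ThScat} and \ref{thm:nonscatA}, the plan is to prove the three statements together by induction on the height $h(A)$. The base case $h(A) = 0$ is the finite case, where $\SA$ is a finite discrete set, $\sigma = 0$, and the formula $d_A(S) = 0$ holds trivially for every $S \le A$. For the inductive step, the approach is to produce, for every subgroup $S \le A$, a clopen neighborhood of $S$ in $\SA$ that is canonically homeomorphic to $\mathcal{S}(A_S)$ for an explicit minimax group $A_S$ with strictly smaller height, and then invoke the inductive hypothesis on $A_S$ to read off both the local topological structure near $S$ and the value $\ovCB_{\SA}(S) = \sigma(A_S)$.

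The local model $A_S$ should come from a general clopen decomposition of $\SA$. For any $a \in A$, the set $U_a^+ = \{S' \in \SA : a \in S'\}$ is clopen and canonically homeomorphic to $\mathcal{S}(A/\langle a\rangle)$ via $S' \mapsto S'/\langle a\rangle$, while $U_a^- = \{S' : a \notin S'\}$ can be further refined by additional divisibility conditions on $a$ modulo candidate subgroups. Finite intersections of such sets yield a cofinal basis of clopen neighborhoods of $S$ of the form $V_L = \{S' : S' \cap L = S \cap L \text{ and } (S'+L)/L = (S+L)/L\}$, as $L$ runs over a cofinal family of finitely generated subgroups of $A$. For $L$ large enough (for instance, containing a lattice of $A$ together with sufficient divisibility data controlling the image of $S$ in the Artinian quotient $A/L$), one expects $V_L$ to be canonically homeomorphic to $\mathcal{S}(A_S^L)$ for a minimax group $A_S^L$ whose invariants are explicitly determined by the way $S$ and $A/S$ decompose relative to $L$.

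With the local model in hand, parts (i) and (ii) follow from a prime-by-prime accounting of the invariants of $A_S^L$. Each non-critical prime $p$ of $A$ contributes to $d_A(S)$ at most one $D$-factor from each of three independent sources: from $S$ when $\ell_p(S) \ge 1$ (the $\ell_{\Ncrit(A)}(S)$ term, since $S$ is then approximated by its subgroups of index $p^n$), from a quasi-cyclic quotient of $A/S$ (the $\kappa_{\Ncrit(A)}(A/S)$ term, obtained by perturbing $S$ along a $C_{p^\infty}$-quotient of $A/S$), and from a torsion-free $\mathbf{Z}[1/p]$-type quotient of $A/S$ (the $\gamma_{\Ncrit(A)}(A/S)$ term). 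Summed over all non-critical primes and torsion-free directions, this matches the stated expression for $d_A(S)$. For part (i), $A_S^L$ is non-critical for all sufficiently large $L$ if and only if no critical prime $p$ of $A$ contributes a $C_{p^\infty}$-factor through $S$ on either side, which translates precisely to $\kappa_{\Crit(A)}(A/S) = \ell_{\Crit(A)}(S) = 0$; when one of these is non-zero, the local model acquires a $W$-factor and the point $S$ lies in $\Cond(\SA)$.

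The main obstacle is the precise identification of the local model $A_S^L$ together with its invariants, and the verification that the various $D$- and $W$-factors assemble into a genuine direct product, i.e.\ correspond to independent deformation directions of $S$ inside $A$ rather than to a nontrivial topological extension. A secondary difficulty is the combinatorial identity needed for the equivalence of the two expressions of $d_A(S)$, which reduces to the additivity $\ell_{\Ncrit(A)}(A) = \ell_{\Ncrit(A)}(S) + \ell_{\Ncrit(A)}(A/S)$ in short exact sequences; this should follow from the fact that $\ell_p(A) \le 1$ for $p \in \Ncrit(A)$, forcing additivity prime by prime. Finally, handling the transition between the scattered and condensation parts uniformly—so that part (i) emerges from the same inductive framework as the rank formula in part (ii)—is expected to require the most delicate bookkeeping, particularly in controlling how close $S$ must be to a $W$-point for a critical prime's $C_{p^\infty}$-contribution to show up in the local model.
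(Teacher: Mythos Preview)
Your plan is genuinely different from the paper's, and as stated it has a gap that prevents the induction from closing.

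The paper does not argue by induction on $h(A)$ or via local models at all. Instead it proves an abstract characterization (Lemma~\ref{carcb}): if one has a partition $X=I\cup C$ and a function $f$ on $\overline I$ such that $C$ is perfect, $f$ is upper semi-continuous and $I$-hereditary on $\overline I$, and strictly upper semi-continuous and strictly hereditary on $I$, then automatically $I$ is the scattered part, $C=\Cond(X)$, and $f=\ovCB_X$. The work then consists in checking these properties for $I=\{\lambda_A=0\}$, $C=\{\lambda_A\ge 1\}$ and $f=d_A$: upper semi-continuity is Lemma~\ref{mapsusc2}, strict upper semi-continuity on $I$ is Proposition~\ref{LemWscsX0} (via idleness, Lemma~\ref{Lem parallnotstrong}), heredity is obtained by explicit commensurable convergence constructions (Lemmas~\ref{LemDIsHered0}, \ref{LemLambdaIsHered}, \ref{lambda1}), and perfectness of $C$ comes from the Cantor set inside $\mathcal S(\Cp^2)$ (Lemma~\ref{LemCp2}). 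The identification $\{\lambda_A=0\}=\{\kappa_{\Crit(A)}(A/S)=\ell_{\Crit(A)}(S)=0\}$ is Corollary~\ref{azero}. No local product decomposition of neighbourhoods is needed.

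The concrete problem with your scheme is the inductive step. You claim that every $S$ admits a clopen neighbourhood homeomorphic to $\mathcal S(A_S)$ with $h(A_S)<h(A)$, but this is false at the ``top'' of $\SA$. Take $A=\Z$ and $S=\{0\}=T_A$: every clopen neighbourhood of $\{0\}$ in $\mathcal S(\Z)$ is $\{0\}\cup\{n\Z:n\ge N\}$, which is homeomorphic to $D\simeq\mathcal S(\Z)$ itself, so the height does not drop. More generally, for $S=T_A$ any neighbourhood of $S$ has Cantor--Bendixson rank $h(A)+1$, so no model of smaller height can exist. Thus the induction has no way to handle the points of maximal extended Cantor--Bendixson rank, which are exactly the ones carrying the information you want.

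There is also a problem with your proposed basic clopen sets $V_L=\{S':S'\cap L=S\cap L,\ S'+L=S+L\}$. The condition $S'+L=S+L$ asks that $S'$ and $S$ have the same image in the Artinian quotient $A/L$; this is a \emph{closed} condition, but it is open only when that image is isolated in $\mathcal S(A/L)$, i.e.\ when $S+L$ has finite index in $A$. For general $S$ (e.g.\ $S$ torsion with $A/S$ not Artinian) no choice of finitely generated $L$ makes $V_L$ open, so these sets do not form a neighbourhood basis.
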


\setcounter{tocdepth}{1}
\tableofcontents

The outline of the article is as follows. In Section
\ref{sec:toppre}, we establish some topological preliminaries, notably a characterization of the extended Cantor-Bendixson function by semi-continuity and hereditary properties (Paragraph \ref{sch}) and a topological characterization of the spaces involved in Theorems \ref{ThScat} and 
\ref{thm:nonscatA} (Paragraph \ref{cha}). The short Section \ref{gen} is devoted to general abelian groups, and we prove Propositions \ref{PropNonNA} and \ref{uncountable} there. Sections \ref{wei} and \ref{lwei} are devoted to the study of $\SA$ when $A$ is a a minimax group. Section \ref{wei} contains enough to prove Theorem \ref{ThScat}, while both sections are necessary to obtain Theorems \ref{thm:nonscatA} and \ref{ThFormula}.

Sections \ref{wei} and \ref{lwei} contain a number of preliminary results, pertaining to the topology of commensurability classes in $\SA$ for an abelian minimax group $A$, which can be of independent interest.



\section{Topological preliminaries}\label{sec:toppre}

\subsection{Cantor-Bendixson analysis}\label{subs:cba}

Let $X=X^{(0)}$ be a topological space. If $X^{(1)}$ is its derived subspace, one can define by transfinite induction $X^{(\alpha)}$ as the derived subspace of $\bigcap_{\beta<\alpha}X^{(\beta)}$. This is a decreasing family of closed subsets of $X$, and the first $\alpha$ such that $X^{(\alpha)}$ is perfect is called the Cantor-Bendixson rank of $X$ and denoted $\CB(X)$. The advantage of this ordinal-valued definition is that $\Cond(X)=\bigcap_\alpha X^{(\alpha)}$ (if we restrict to integers, this is only an inclusion $\subset$ in general). If $x\notin\Cond(X)$, its Cantor-Bendixson rank is defined as $$\CB_X(x)=\sup\{\alpha|\,x\in X^{(\alpha)}\}.$$ This function is extended to all of $X$ by
$$\ovCB_X(x)=
\inf\sup\{\CB_X(y)|\,y\in V-\Cond(X),\},$$
where the infimum ranges over all neighbourhoods of $x$, assuming $\sup\emptyset=-\infty$.

\subsection{Semi-continuity, heredity}\label{sch}
\begin{definition}\label{def lwsemcont}
Let $X$ be a topological space. A map $f: X \rightarrow\Ord$ is called \emph{upper semi-continuous} at $x_0\in X$ if $f$ has a local maximum at $x_0$. It is
\emph{strictly} upper semi-continuous at $x_0$ if it is a strict local maximum, i.e. if $f(x)<f(x_0)$ when $x$ is close enough to $x_0$.
\end{definition}

\begin{definition}
Let $X$ be a topological space.
\begin{itemize}\item If $Y\subset X$ is a dense subset, a map $X\to\Ord$ is $Y$-\textit{hereditary} if for every $x\in X$ and for every neighbourhood $V$ of $x$ in $X$, we have $$f(x)\le\sup_{y\in V\cap Y}f(y)$$
\item A map $X\to\Ord$ is \textit{strictly hereditary} if for every $x\in X$ and for every neighbourhood $V$ of $x$ in $X$, we have $$f(x)\le\sup_{x'\in V\setminus\{x\}}(f(x')+1),$$
where we set $\sup\emptyset=0$.
\end{itemize}\end{definition}

\begin{lemma}\label{carcb}
Let $X$ be a topological space, with two subsets $I,C \subset X$ and a map $f:\overline{I}
\longrightarrow \Ord$. Assume that the following conditions
are satisfied:
\begin{itemize}
\item[(i)]$C$ has no isolated point;
\item[(ii)]$X=I\cup C$;
\item[(iii)]$f$ is upper semi-continuous and $I$-hereditary on $\overline{I}$;
\item[(iv)]$f$ is strictly upper semi-continuous and strictly hereditary on $I$.
\end{itemize}
Then $I=\mathcal{I}(X)$, $C=\Cond(X)$, and $f$ coincides with $\ovCB_X$ on $\overline{I}$.
\end{lemma}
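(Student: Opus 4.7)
The plan is to verify the three conclusions in turn, with the bulk of the work devoted to showing $I=\mathcal{I}(X)$. First, the inclusion $C\subset\Cond(X)$ is easy: assumption (i) implies that $\overline{C}$ also has no isolated point (any accumulation point of $C$ is a limit of infinitely many points of $C$, since no point of $C$ is isolated in $C$), so $\overline{C}$ is perfect; being contained in the largest perfect subset, $\overline{C}\subset\Cond(X)$, and \emph{a fortiori} $C\subset\Cond(X)$.

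The heart of the proof is a transfinite induction on $\alpha$ establishing: if $x\in I$ with $f(x)=\alpha$, then $x\in\mathcal{I}(X)$ and $\CB_X(x)=\alpha$. Fix such $x$ and use strict upper semi-continuity in (iv) to choose a neighbourhood $V_0$ of $x$ on which $f(y)<\alpha$ for every $y\in V_0\cap\overline{I}\setminus\{x\}$. For the lower bound $\CB_X(x)\ge\alpha$, I would show by a sub-induction that $x\in X^{(\beta)}$ for every $\beta\le\alpha$; the critical successor step $\beta=\gamma+1$ uses strict heredity in (iv) to exhibit, in any neighbourhood $V\subset V_0$ of $x$, some $y\in V\cap\overline{I}\setminus\{x\}$ with $f(y)\ge\gamma$, and then $I$-heredity in (iii) upgrades this to a point $z\in V\cap I\setminus\{x\}$ with $f(z)\ge\gamma$ (choosing a smaller neighbourhood of $y$ that avoids $x$). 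The main induction hypothesis applied to $z$, which satisfies $f(z)<\alpha$ by strict usc, yields $\CB_X(z)=f(z)\ge\gamma$, so $z\in X^{(\gamma)}$ and hence $x\in X^{(\gamma+1)}$. For the upper bound $\CB_X(x)\le\alpha$, the target is a neighbourhood $V_1\subset V_0$ with $V_1\cap X^{(\alpha)}\subset\{x\}$: the induction hypothesis combined with strict usc kills contributions from $V_1\cap I\setminus\{x\}$; usc together with $I$-heredity in (iii) propagates the bound to $V_1\cap(\overline{I}\setminus I)$; and the points of $V_1\cap C\subset\Cond(X)$ are dispatched by shrinking $V_1$ to a neighbourhood disjoint from $\overline{C}$.

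Once the induction is complete, $I\subset\mathcal{I}(X)$; combining with $C\subset\Cond(X)$ and hypothesis (ii) forces $\mathcal{I}(X)=X\setminus\Cond(X)\subset X\setminus C\subset I$, whence $I=\mathcal{I}(X)$ and dually $C=\Cond(X)$. On $I$ we then have $f=\CB_X=\ovCB_X$. For $x\in\overline{I}\setminus I$, combining upper semi-continuity and $I$-heredity from (iii) gives
\[
f(x)=\inf_V\sup_{y\in V\cap I}f(y)=\inf_V\sup_{y\in V\cap\mathcal{I}(X)}\CB_X(y)=\ovCB_X(x),
\]
where the first equality uses that usc bounds $f$ from above locally by $f(x)$ while $I$-heredity forces the $I$-supremum to reach $f(x)$. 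The main obstacle is the upper bound in the central induction, specifically the separation of $x\in I$ from $\overline{C}$: one must use the strict conditions (iv) in full to conclude that no $x\in I$ lies in $\overline{C}$, and it is precisely at this step that the tight interplay between the four hypotheses is essential.
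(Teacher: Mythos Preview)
Your outline matches the paper's proof almost line for line: the paper does precisely the transfinite induction via (iv) to get $I\subset\mathcal I(X)$ and $\CB_X=f$ on $I$, then (i) for $C\subset\Cond(X)$, then (ii) to upgrade both inclusions to equalities, then (iii) to finish on $\overline I\cap C$. You have simply unpacked the very terse induction, which is fine.

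Where you go beyond the paper is in flagging the separation problem in the upper bound, and you are right that something is hiding there. But your proposed fix --- that the strict conditions (iv) force $I\cap\overline C=\emptyset$ --- does not work. Take $X\subset\mathbf R$ to be $\{0\}\cup\{1/n:n\ge1\}\cup\bigcup_n K_n$ with each $K_n$ a Cantor set inside $(\tfrac{1}{n+1},\tfrac{1}{n})$; put $I=\{0\}\cup\{1/n\}$ (closed here, so $\overline I=I$), $C=\bigcup_n K_n$, $f(1/n)=0$, $f(0)=1$. All four hypotheses hold, yet $0\in I\cap\Cond(X)$ and the conclusion fails. The paper's one-line induction glosses over exactly this point. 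What actually rescues the argument in both applications of the lemma is that $I$ is \emph{open} there (either $I=\mathcal S(A)$, or $I=\{\lambda_A=0\}$, which is open by upper semi-continuity of $\lambda_A$); then any $x\in I$ has a neighbourhood contained in $I$, the points of $C$ never intrude, and your induction runs cleanly. So the missing ingredient is the openness of $I$, not a deeper use of (iv).
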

\begin{proof}

First, using (iv), by induction on $\alpha$, if $x\in I$ and $f(x)=\alpha$, we get $x\in\mathcal{I}(X)$ and the Cantor-Bendixson rank of $x$ in $X$ is $\alpha$. Hence $I \subset \mathcal{I}(X)$.

By (i), $C\subset\Cond(X)$.
Combining and using (ii), we get $I=\mathcal{I}(X)$ and $C=\Cond(X)$.
Finally using (iii), we get that $f$ coincides with the extended Cantor-Bendixson rank on $C\cap\overline{I}$. 
\end{proof}

\subsection{Characterization of some topological spaces}\label{cha}

It is very useful to have a characterization of some topological spaces. For instance, we already used in the introduction the classical fact (see \cite[Theorem 7.4]{Kec95}) that if a non-empty topological space is metrizable, compact, perfect and totally disconnected, then it is homeomorphic to the Cantor set.

The second case concerns scattered spaces. It is known \cite{MS20} that a non-empty Hausdorff compact scattered topological space is characterized, up to homeomorphism, by its Cantor-Bendixson rank (an arbitrary ordinal, countable in the metrizable case), and the number of points of maximal Cantor-Bendixson rank (an arbitrary positive integer). For our purposes it is enough to retain

\begin{proposition}\label{mazur}
Let $X$ be a Hausdorff compact scattered topological space of finite Cantor-Bendixson rank $m+1\ge 1$, with $n\ge 1$ points of maximal Cantor-Bendixson rank ($=m$). Then $X$ is homeomorphic to $D^m\times[n]$.
\end{proposition}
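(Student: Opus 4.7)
I would prove this by induction on $m$, using the classical fact that every compact Hausdorff scattered space is zero-dimensional (so clopen sets form a basis). The base case $m = 0$ is immediate: $X^{(1)} = \emptyset$ says every point is isolated, so by compactness $X$ is a finite discrete space of cardinality $n$, namely $[n] = D^0 \times [n]$.

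For the inductive step, I would first reduce to $n = 1$. The set $X^{(m)} = \{x_1, \dots, x_n\}$ is finite and closed, so zero-dimensionality combined with Hausdorffness provides pairwise disjoint clopen neighbourhoods $V_i \ni x_i$; absorbing the clopen leftover $X \smallsetminus \bigcup_i V_i$ into $V_1$ yields a clopen partition $X = V_1 \sqcup \cdots \sqcup V_n$. Since each $V_i$ is clopen in $X$, the Cantor-Bendixson ranks of its points coincide with those computed in $X$, so $V_i$ is compact Hausdorff scattered of rank $m+1$ with unique top-rank point $x_i$. It thus suffices to show: any compact Hausdorff scattered $V$ of rank $m+1$ with a single top-rank point $x$ is homeomorphic to $D^m$.

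For this $n = 1$ case, I would work in the metrizable setting in which this proposition is actually applied (every $X$ arising as a subspace of $\mathcal{S}(A)$ for countable $A$ is metrizable, giving a countable shrinking clopen neighbourhood basis at $x$). Then the rank-$(m-1)$ points $X^{(m-1)} \smallsetminus \{x\}$ form a countable discrete set $\{y_1, y_2, \dots\}$ accumulating only at $x$. A diagonal construction using zero-dimensionality produces pairwise disjoint clopen neighbourhoods $U_k \ni y_k$ such that $U_k \cap X^{(m-1)} = \{y_k\}$ (so each $U_k$ is scattered of rank $m$ with unique top-rank point $y_k$) and such that the $U_k$ shrink to $x$, meaning every neighbourhood of $x$ contains cofinitely many of them. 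By the inductive hypothesis $U_k \cong D^{m-1}$, and the decomposition $V = \{x\} \sqcup \bigsqcup_k U_k$ exhibits $V$ as the one-point compactification of $\bigsqcup_k D^{m-1}$, which is precisely $D \times D^{m-1} = D^m$.

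The main obstacle is the simultaneous construction of the $U_k$'s satisfying both the single-top-rank-point property (so that the inductive hypothesis applies with parameters $m-1$ and $1$) and the shrinking condition at $x$. This is where metrizability is essential: without a countable neighbourhood basis at $x$, already the case $m = n = 1$ fails, since the one-point compactification of an uncountable discrete set satisfies all the stated hypotheses but is not homeomorphic to $D$. Alternatively, and more in line with the paragraph preceding the proposition, one could verify directly by induction on $m$ that $D^m \times [n]$ itself is scattered compact Hausdorff of rank $m+1$ with $n$ top-rank points (using the product rule $\CB_{D \times Y}(\infty, y) = \CB_Y(y) + 1$ and $\CB_{D \times Y}(j,y) = \CB_Y(y)$ for $j \in \omega$) and then invoke the Mazurkiewicz-Sierpi\'nski classification just recalled to conclude $X \cong D^m \times [n]$.
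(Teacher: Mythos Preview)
The paper does not give its own proof here: the proposition is simply recorded as a consequence of the Mazurkiewicz--Sierpi\'nski classification cited in the preceding sentence, and your closing alternative (compute the Cantor--Bendixson invariants of $D^m\times[n]$ and invoke that classification) is exactly the paper's route. Your primary inductive argument is a genuinely different, more self-contained approach that in effect reproves the finite-rank case of Mazurkiewicz--Sierpi\'nski rather than quoting it. Two points to tighten: first, make explicit that your clopen pieces $U_k$ actually \emph{partition} $V\setminus\{x\}$ (cleanest is to start from a shrinking clopen basis $V=W_0\supset W_1\supset\cdots$ at $x$ and apply the inductive hypothesis to each compact annulus $W_j\setminus W_{j+1}$, then refine); second, the claim that the one-point compactification of $\bigsqcup_{k\in\omega}D^{m-1}$ ``is precisely $D^m$'' is not a tautological identification---$D^m$ minus its top point is not literally $\omega\times D^{m-1}$---but it follows by running the same decomposition on $D^m$ itself, so that both $V$ and $D^m$ are one-point compactifications of homeomorphic spaces.

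Your observation about metrizability is correct and worth stating: the proposition as written, and the paper's phrasing of the Mazurkiewicz--Sierpi\'nski result, both silently require a countability hypothesis, as your one-point-compactification-of-an-uncountable-discrete-set counterexample shows. This causes no trouble in the paper, since every application is to closed subspaces of $\mathcal S(A)$ with $A$ countable, hence metrizable.
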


This applies for instance to the ordinal $\omega^m\cdot n+1$.

If $X$ is a topological space, let us define $$C_{\alpha}(X):=\{
x \in \overline{\mathcal{I}(X)} \cap \Cond(X)\, \vert \, \ovCB_X(x) \ge
\alpha\}.$$

\begin{proposition} \label{PropHomeo}
Let $X$ be a metrizable Boolean space and $\sigma<\infty$.
Assume that $\CB(X)<\infty$ and
\begin{itemize}
\item[$(i)$] $\CB(X)=\sigma+1$ and $C_i(X)$ is homeomorphic to the Cantor space for all
$i\le \sigma$;
\item[$(ii)$] $C_{i+1}(X)$ has empty interior in $C_i(X)$
for all $i$.
\end{itemize}
Then $X$ is
homeomorphic to $D^\sigma\times W$.
\end{proposition}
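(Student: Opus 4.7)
The plan is first to note that the model $D^\sigma\times W$ itself satisfies hypotheses (i) and (ii), by a direct Cantor--Bendixson product computation: $\Cond(D^\sigma\times W)=D^\sigma\times K$ where $K$ denotes the Cantor part of $W$, the isolated points $\omega^\sigma\times\mathcal{I}(W)$ are dense in the product, and for $(d,c)\in\Cond(D^\sigma\times W)$ one sees $\ovCB(d,c)$ equals the number of $\infty$-coordinates of $d$. Hence $C_i$ is the product of the closed set $E_i:=\{d\in D^\sigma:\#\{j:d_j=\infty\}\ge i\}$ with $K$; this is compact, metrizable, perfect (since $K$ is) and totally disconnected, so Cantor, for each $i\le\sigma$, and the exactly-$i$ stratum $E_i\setminus E_{i+1}$ is dense in $E_i$, giving (ii). Thus the proposition reduces to a uniqueness assertion, which I would prove by induction on $\sigma$.

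For the base case $\sigma=0$: $\CB(X)=1$ gives $X=\mathcal{I}(X)\sqcup\Cond(X)$ with $\mathcal{I}(X)$ the open scattered part and $\Cond(X)=X^{(1)}$ perfect; (i) asserts $C_0(X)=\overline{\mathcal{I}(X)}\cap\Cond(X)$ is Cantor and (ii) is vacuous. Under the implicit hypothesis (present in the application) that $\mathcal{I}(X)$ is dense in $X$, so that $\Cond(X)=C_0(X)$ is itself Cantor, the Pierce-type characterization of the dusty Cantor space recalled above the proposition (Theorem~\ref{thpierce}) yields $X\cong W=D^0\times W$.

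For the inductive step $\sigma\ge 1$, I extract a $D$-factor locally. Pick $x_0\in C_\sigma(X)$ (nonempty Cantor by~(i)) and a decreasing clopen neighbourhood base $(U_n)_{n\ge 0}$ of $x_0$ in $X$ with $U_0=X$ and $\bigcap_n U_n=\{x_0\}$; using~(ii), which says $C_\sigma(X)$ is nowhere dense in $C_{\sigma-1}(X)$, one refines this base so that each shell $V_n:=U_n\setminus U_{n+1}$ is clopen and disjoint from $C_\sigma(X)$. The $\ovCB$ function is local, so $C_i(V_n)=C_i(X)\cap V_n$ for $i\le\sigma-1$, each being a nonempty clopen piece of the Cantor set $C_i(X)$ and therefore Cantor; the empty-interior condition (ii) transfers verbatim from $X$ to the clopen subspace $V_n$. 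Hence $V_n$ satisfies the hypotheses of the proposition with parameter $\sigma-1$, and the inductive hypothesis gives $V_n\cong D^{\sigma-1}\times W$. The disjoint union $\{x_0\}\cup\bigsqcup_n V_n$ with the natural convergence $V_n\to\{x_0\}$ is then a clopen neighbourhood of $x_0$ homeomorphic to $D\times(D^{\sigma-1}\times W)=D^\sigma\times W$. Since $C_\sigma(X)$ is itself Cantor, a clopen partition of $C_\sigma(X)$ refining a cover by such local models, combined with Pierce's characterization of $W$ applied to the pieces, assembles to the required global homeomorphism $X\cong D^\sigma\times W$.

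The principal obstacle is the verification, in the inductive step, that the shells $V_n$ genuinely satisfy the hypotheses at level $\sigma-1$: one must ensure the locality $C_i(V_n)=C_i(X)\cap V_n$, that these are actually Cantor (so the shells are nontrivial, which is precisely where (ii) forces a good choice of $(U_n)$ and precludes degenerate refinements), and propagate the empty-interior condition from $X$ down to $V_n$. Once this locality is in place, the base case and the uniformity of the Cantor set $C_\sigma(X)$ reduce the global statement to a gluing problem settled by Pierce's theorem.
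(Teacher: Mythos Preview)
Your inductive step has a fatal gap at the very first move: you claim that by refining the clopen neighbourhood base $(U_n)$ of $x_0\in C_\sigma(X)$ you can arrange the shells $V_n=U_n\setminus U_{n+1}$ to be disjoint from $C_\sigma(X)$. This is impossible. By hypothesis (i), $C_\sigma(X)$ is a Cantor set, hence perfect; since $x_0\in C_\sigma(X)$, every clopen neighbourhood $U_n$ of $x_0$ meets $C_\sigma(X)$ in an infinite (indeed Cantor) set, while $\bigcap_n U_n\cap C_\sigma(X)=\{x_0\}$. Thus infinitely many shells must intersect $C_\sigma(X)$, and your reduction to level $\sigma-1$ on each $V_n$ never gets started. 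Hypothesis (ii) does say $C_\sigma(X)$ is nowhere dense in $C_{\sigma-1}(X)$, but that is a statement about open sets in $C_{\sigma-1}(X)$, not about clopen neighbourhoods of $x_0$ in $X$; it gives no mechanism for avoiding the rest of $C_\sigma(X)$ while staying near $x_0$. Your self-diagnosed ``principal obstacle'' (locality of $C_i$ on the shells) is therefore not the real one.

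The paper sidesteps this difficulty by not attempting a local $D$-factor extraction at all. Instead it works globally on the condensation part: setting $K_i=C_{\sigma-i}(X)$, one has a nested chain of Cantor sets inside $\Cond(X)=C_0(X)$, each nowhere dense in the next. Lemma~\ref{LemOmegaMuK}, proved by a straightforward induction on the Knaster--Reichbach extension theorem, produces a homeomorphism $\Cond(X)\to K\times D^\sigma$ carrying $K_i$ to $K\times D^i$, i.e.\ preserving $\ovCB$. A single application of Pierce's Theorem~\ref{thpierce} then extends this to $X\to D^\sigma\times W$. The essential difference is that the induction happens inside the Cantor set $\Cond(X)$ along the chain $C_0\supset C_1\supset\cdots$, not by peeling off a $D$-factor from the ambient space $X$; this is exactly what the perfectness of $C_\sigma(X)$ forces.
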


We make use of the following result of Pierce \cite[Th.1.1]{Pie70}. Here, we assume that a point not in the closure the scattered part has extended Cantor-Bendixson rank $-\infty$.

\begin{theorem}[Pierce]
\label{thpierce}
Let $X,Y$ be metrizable Boolean spaces. Let $\phi$ be a homeomorphism $\Cond(X)\to \Cond(Y)$. Suppose that $\mathcal{I}(X)$ is homeomorphic to $\mathcal{I}(Y)$ and that $\phi$ preserves the extended Cantor-Bendixson rank. Then $\phi$ extends to a homeomorphism $X\to Y$.
\end{theorem}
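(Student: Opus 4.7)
The plan is to apply Pierce's theorem (Theorem~\ref{thpierce}) with target $Y := D^\sigma \times W$. This reduces the proof to three steps: (a) verifying that $Y$ itself satisfies (i)--(ii), so the invariants on the two sides match; (b) producing a rank-preserving homeomorphism $\phi \colon \Cond(X) \to \Cond(Y)$; and (c) exhibiting a homeomorphism $\mathcal{I}(X) \to \mathcal{I}(Y)$. A preliminary observation is that $C_0(X) = \Cond(X)$, i.e.\ $\Cond(X) \subseteq \overline{\mathcal{I}(X)}$; without this, Pierce cannot match condensation parts since $\Cond(Y) = \overline{\mathcal{I}(Y)} \cap \Cond(Y)$, and this should be checked as a consequence of (i)--(ii).

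For step (a), I compute the derivations of $Y$ directly: $\Cond(Y) = D^\sigma \times \Cond(W)$ and $\mathcal{I}(Y) = D^\sigma \times \mathcal{I}(W)$, and an easy induction gives $Y^{(k)} = \bigl((D^\sigma)^{(k)} \times W\bigr) \cup \bigl(D^\sigma \times \Cond(W)\bigr)$ for $1 \le k \le \sigma$. Consequently $\CB(Y) = \sigma+1$, and $\ovCB_Y(x,w)$ equals the number of coordinates of $x$ equal to $\infty$, so that $C_i(Y) = (D^\sigma)^{(i)} \times \Cond(W)$. As the product of a nonempty compact totally disconnected space with a Cantor space, this is itself a Cantor space for each $i \le \sigma$; and $C_{i+1}(Y)$ is nowhere dense in $C_i(Y)$ because $(D^\sigma)^{(i+1)}$ is nowhere dense in $(D^\sigma)^{(i)}$.

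For step (b), I construct $\phi$ by descending induction on $i$, building $\phi_i \colon C_i(X) \to C_i(Y)$ that extends $\phi_{i+1}$. The base case $\phi_\sigma$ is any homeomorphism between the two Cantor spaces $C_\sigma(X)$ and $C_\sigma(Y)$. The inductive step invokes the classical pair-extension principle for Cantor spaces: any homeomorphism between two closed nowhere-dense Cantor subsets of two Cantor spaces extends to a homeomorphism of the ambient spaces, provable by a back-and-forth argument on clopen partitions. The final $\phi = \phi_0$ satisfies $\phi(C_i(X)) = C_i(Y)$ for every $i$, hence preserves $\ovCB$. For step (c), both $\mathcal{I}(X)$ and $\mathcal{I}(Y)$ are countable scattered locally compact metrizable zero-dimensional spaces; their Cantor--Bendixson filtrations coincide combinatorially thanks to (i)--(ii), and a Mazurkiewicz--Sierpi\'nski classification of such spaces produces the required homeomorphism. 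Pierce's theorem then yields $X \cong Y$.

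The main obstacle will be the pair-extension principle for Cantor spaces used in step (b): it is classical but nontrivial, and it is the mechanism that transports the inductive data from $C_{i+1}$ up to $C_i$ while respecting the nowhere-dense inclusion. The remaining ingredients are either direct derivations computations on the model $Y$ or straightforward appeals to standard classification theorems for scattered spaces.
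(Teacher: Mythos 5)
Your proposal does not prove the statement in question; it proves a different one. Theorem \ref{thpierce} is Pierce's extension theorem: for \emph{arbitrary} metrizable Boolean spaces $X$ and $Y$, any homeomorphism $\Cond(X)\to\Cond(Y)$ preserving the extended Cantor--Bendixson rank extends to a homeomorphism $X\to Y$, provided the scattered parts are homeomorphic. What you have written is instead an argument for Proposition \ref{PropHomeo} (that a space satisfying conditions (i)--(ii) is homeomorphic to $D^\sigma\times W$), and that argument explicitly \emph{invokes} Pierce's theorem as its final step (``Pierce's theorem then yields $X\cong Y$''). As a proof of Theorem \ref{thpierce} this is circular: the statement to be proved is used as the concluding ingredient, and nothing in your steps (a)--(c) addresses the actual content of the theorem, namely how to extend a homeomorphism given only on the condensation parts across the scattered parts of two general Boolean spaces.

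For the record, the paper does not prove this statement either --- it is quoted from Pierce \cite[Th.~1.1]{Pie70} --- so there is no internal proof to compare against. A genuine proof would have to run a back-and-forth construction on compatible clopen partitions of $X$ and $Y$, simultaneously refining $\phi$ on the condensation parts and the given homeomorphism of the scattered parts, and using the extended Cantor--Bendixson rank to match the way $\overline{\mathcal{I}(X)}$ accumulates on $\Cond(X)$ with the corresponding accumulation in $Y$; this is the substance of Pierce's paper and is entirely absent from your proposal. (Your steps (a)--(c) do essentially reproduce the paper's proof of Proposition \ref{PropHomeo}, with your ``pair-extension principle'' playing the role of the Knaster--Reichbach theorem \ref{ThHomeoExt} via Lemma \ref{LemOmegaMuK}; but that is a different result from the one you were asked to prove.)
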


We also need the following lemma.
Let us view $D^{m-1}$ as the subspace $D^{m-1}\times\{\infty\}$ of $D^m$ ($D^0$ being a singleton).

\begin{lemma} \label{LemOmegaMuK}
Let $K:=\{0,1\}^{\N}$ be the Cantor discontinuum and let $0 \le
\sigma< \omega$. Let $K=K_\sigma \supset K_{\sigma-1} \supset \dots \supset
K_0$ be subsets of $K$ all homeomorphic to $K$ and such that
$K_i$ has empty interior in $K_{i+1}$ for each $0 \le i \le \sigma-1$.
Then there is a homeomorphism $K\to K\times D^\sigma$ mapping $K_i$ to $K\times D^{i}$ for each $0\le i\le\sigma$.
\end{lemma}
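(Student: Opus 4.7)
The plan is to proceed by induction on $\sigma$. The case $\sigma = 0$ is trivial, as $D^0$ is a point and $K = K_0$.

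For the inductive step, apply the inductive hypothesis to $K_{\sigma-1}$ equipped with its chain $K_0 \subset \cdots \subset K_{\sigma-1}$; the conditions ``Cantor'' and ``empty interior in the next'' are intrinsic and so transfer. This yields a homeomorphism $\alpha \colon K_{\sigma-1} \to K \times D^{\sigma-1}$ sending $K_i$ to $K \times D^i$ for $i \leq \sigma-1$. It then suffices to extend $\alpha$ to a homeomorphism $\widetilde\alpha \colon K \to K \times D^\sigma$, where $K \times D^{\sigma-1}$ is identified with $K \times D^{\sigma-1} \times \{\infty\} \subset K \times D^\sigma$. This latter subspace is closed, nowhere dense in $K \times D^\sigma$, and Cantor: nowhere-density uses that every non-empty open set in $D^\sigma$ contains an isolated point (one with finite last coordinate). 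Once $\widetilde\alpha$ is obtained, $\widetilde\alpha(K_i) = \alpha(K_i) = K \times D^i$ for $i \leq \sigma-1$ and $\widetilde\alpha(K) = K \times D^\sigma$ automatically.

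The extension step is an instance of a Knaster--Reichbach-type lemma: a homeomorphism between closed nowhere-dense Cantor subspaces of two Cantor spaces extends to a homeomorphism of the ambient spaces. To prove it directly, fix metrics and construct decreasing sequences of clopen neighborhoods $K = U_0 \supset U_1 \supset \cdots$ of $K_{\sigma-1}$ (with $\bigcap_n U_n = K_{\sigma-1}$) and $K \times D^\sigma = V_0 \supset V_1 \supset \cdots$ of $K \times D^{\sigma-1} \times \{\infty\}$ (with $\bigcap_n V_n = K \times D^{\sigma-1} \times \{\infty\}$). Simultaneously, refine a nested sequence of finite clopen partitions $\mathcal{P}_n$ of $K_{\sigma-1}$ of mesh tending to $0$ whose $\alpha$-image partitions also have mesh tending to $0$. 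Arrange the $U_n$'s and $V_n$'s so that each ``annulus'' $U_n \setminus U_{n+1}$ (resp.\ $V_n \setminus V_{n+1}$) decomposes into non-empty clopen pieces indexed by $\mathcal{P}_n$ (resp.\ by $\alpha(\mathcal{P}_n)$), with each piece contained in a small clopen neighborhood of the corresponding cell. Then for each cell $P \in \mathcal{P}_n$, pick any homeomorphism between the piece of $U_n \setminus U_{n+1}$ over $P$ and the piece of $V_n \setminus V_{n+1}$ over $\alpha(P)$; both are non-empty clopen subsets of Cantor spaces, hence themselves Cantor and hence mutually homeomorphic. Gluing these together with $\alpha$ on $K_{\sigma-1}$ defines $\widetilde\alpha$.

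The main obstacle is the continuity of $\widetilde\alpha$ at points of $K_{\sigma-1}$ (and symmetrically for the inverse at points of $K \times D^{\sigma-1} \times \{\infty\}$). This is exactly what dictates both the mesh-control of the partitions $\mathcal{P}_n$ and the adapted choice of neighborhood bases: a sequence in the annuli approaching $K_{\sigma-1}$ must eventually lie in cells of arbitrarily small diameter, whose images in $K \times D^{\sigma-1}$ lie in cells of comparably small diameter, forcing the glued map to be continuous at the limit. A minor combinatorial point is to ensure that each annulus meets every cell of the current partition non-trivially when selecting the $U_n$'s and $V_n$'s; this is possible because $K_{\sigma-1}$ has empty interior in $K$, so arbitrary clopen neighborhoods of each cell contain points outside $K_{\sigma-1}$.
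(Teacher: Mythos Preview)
Your proof is correct and follows the same inductive strategy as the paper: induction on $\sigma$ combined with the Knaster--Reichbach extension theorem (stated in the paper as Theorem~\ref{ThHomeoExt}, so your direct construction of the extension, while correct, is unnecessary). The only cosmetic difference is the order of the two steps: the paper first applies Knaster--Reichbach to straighten the outermost inclusion $K_{\sigma-1}\subset K$ into $K\times\{\infty\}\subset K\times D$ and then invokes the induction hypothesis inside $K_{\sigma-1}$, whereas you first apply the induction hypothesis inside $K_{\sigma-1}$ and then extend outward.
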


The proof is an induction based on the following theorem.

\begin{theorem}\cite[Th.2]{KR53} \label{ThHomeoExt}
Let $K_i$ ($i=1,2$) be topological spaces homeomorphic to the Cantor set and let $C_i \subset K_i$ be closed subsets with empty interior in $K_i$. Assume that there is a homeomorphism $h:C_1 \longrightarrow C_2$. Then there is a homeomorphism $\widetilde{h}:K_1 \longrightarrow K_2$ extending $h$.
\end{theorem}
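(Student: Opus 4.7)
The plan is to build $\widetilde{h}$ as the inverse limit of a back-and-forth sequence of finer and finer clopen partitions of $K_1$ and $K_2$, whose traces on $C_1$ and $C_2$ are matched through $h$ at every stage. The key combinatorial input is an elementary extension lemma: if $K$ is a non-empty Cantor space and $C\subset K$ is closed with empty interior, then any finite clopen partition $\{U_1,\dots,U_n\}$ of $C$ extends to a clopen partition $\{V_1,\dots,V_n\}$ of $K$ with $V_i\cap C=U_i$. This follows from zero-dimensionality: each $U_i$ has the form $C\cap W_i$ for some clopen $W_i\subset K$; one disjointifies the $W_i$ and then distributes the clopen sets in the complement arbitrarily.

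Fix compatible metrics $d_1,d_2$ on $K_1,K_2$. Starting from $\mathcal{P}_0^{(i)}=\{K_i\}$ matched trivially, construct inductively finite clopen partitions $\mathcal{P}_n^{(1)}$ of $K_1$ and $\mathcal{P}_n^{(2)}$ of $K_2$ together with a bijection $\phi_n\colon\mathcal{P}_n^{(1)}\to\mathcal{P}_n^{(2)}$ satisfying $h(P\cap C_1)=\phi_n(P)\cap C_2$ for every $P\in\mathcal{P}_n^{(1)}$. At step $n+1$, if $n$ is even, refine each $P\in\mathcal{P}_n^{(1)}$ into clopen pieces of $d_1$-diameter less than $2^{-n}$; the trace on $C_1$ yields a clopen partition of $P\cap C_1$, which through $h$ becomes a clopen partition of $\phi_n(P)\cap C_2$; apply the lemma inside $\phi_n(P)$ to extend the latter to a clopen partition of $\phi_n(P)$. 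If $n$ is odd, reverse the roles of $K_1$ and $K_2$. By alternation, both $d_1$- and $d_2$-meshes tend to $0$.

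For each $x\in K_1$, the pieces $P_n(x)\in\mathcal{P}_n^{(1)}$ containing $x$ form a decreasing chain whose $d_1$-diameters tend to $0$, and the matched pieces $\phi_n(P_n(x))\subset K_2$ have $d_2$-diameters tending to $0$, so they meet in a unique point, which we declare to be $\widetilde{h}(x)$. The symmetric construction from $K_2$ provides an inverse, so $\widetilde{h}$ is a bijection. For each $n$ the preimage under $\widetilde{h}$ of any $Q\in\mathcal{P}_n^{(2)}$ is the clopen set $\phi_n^{-1}(Q)$, and these generate the topology on $K_2$, so $\widetilde{h}$ is continuous; being a continuous bijection between compact Hausdorff spaces it is a homeomorphism. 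Finally, if $x\in C_1$ then the inductive compatibility forces $h(x)\in\phi_n(P_n(x))\cap C_2$ for all $n$, whence $\widetilde{h}(x)=h(x)$.

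The step I expect to require the most care is verifying that the extension lemma is legitimately applicable inside each piece $\phi_n(P)$ at every stage of the recursion. This needs (a) that $\phi_n(P)$ itself is a Cantor space, which is automatic since a non-empty clopen subset of a Cantor space is perfect, compact, metrizable and totally disconnected, and (b) that $\phi_n(P)\cap C_2$ has empty interior in $\phi_n(P)$, which is inherited from the empty-interior hypothesis on $C_2\subset K_2$. One also has to check that the starting data in the refinement (the clopen partition of $\phi_n(P)\cap C_2$ produced via $h$) is genuinely a partition into clopen subsets of $C_2$ and not merely of $P\cap C_1$, which is immediate because $h$ is a homeomorphism. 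Once these routine verifications are in place, the back-and-forth construction yields the desired extension $\widetilde{h}$.
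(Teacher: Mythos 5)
The paper offers no proof of this statement: it is quoted directly from Knaster and Reichbach \cite[Th.2]{KR53}, so there is no in-paper argument to compare yours against. Your back-and-forth construction via matched clopen partitions is, in substance, the standard (indeed the original) proof of that theorem, and the outline is correct: the extension lemma, the alternating refinement that drives both meshes to zero, the limit definition of $\widetilde{h}$, and the verification that $\widetilde{h}$ restricts to $h$ on $C_1$ all go through.

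One point deserves more care than you give it, and it is precisely the point where the empty-interior hypothesis actually enters the argument. Your extension lemma, as stated and proved, never uses that $C$ has empty interior, and it can return empty pieces: if a refinement piece $P_j$ of $P$ has $P_j\cap C_1=\emptyset$, the prescribed trace $h(P_j\cap C_1)$ is empty, and nothing in ``distribute the clopen sets in the complement arbitrarily'' forces the corresponding piece $Q_j\subset\phi_n(P)$ to be non-empty (the leftover complement may even be empty if the chosen $W_i$ already cover $\phi_n(P)$). An empty $Q_j$ would leave $\widetilde{h}$ undefined on $P_j$, since the nested intersection defining $\widetilde{h}(x)$ would be empty. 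The repair is exactly where empty interior is used: because $\phi_n(P)\cap C_2$ has empty interior in the non-empty clopen Cantor set $\phi_n(P)$, the open set $\phi_n(P)\setminus C_2$ is non-empty, hence contains a non-empty clopen set, which can be split into as many non-empty clopen pieces as needed and distributed so that every $Q_j$ is non-empty while keeping $Q_j\cap C_2$ as prescribed; the symmetric remark applies on the $K_1$ side at odd stages. With that adjustment your argument is complete, and your item (b) should be reworded accordingly, since the inheritance of empty interior is needed for this non-emptiness step rather than for the extension lemma itself.
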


\begin{proof}[Proof of Lemma \ref{LemOmegaMuK}]
The proof is an induction on $\sigma$.

Step 1: The result is obvious if $\sigma=0$ and follows from Theorem \ref{ThHomeoExt} if $\sigma=1$.

Step 2: Assume now $\sigma>1$. Apply Step 1 to get a homeomorphism $\phi:K\to K\times D$ with $\phi(K_{\sigma-1})=K\times\{\infty\}$. By induction, there exists a homeomorphism $\psi:K\to K\times D^{\sigma-1}$ mapping $\phi(K_i)$ to $K\times D^i$ for all $0\le i\le\sigma-1$. Then, for $i\le\sigma-1$, the homeomorphism
$(\psi \times \text{Id})\circ\phi$ of $K$ to $K\times D^{\sigma-1}\times D=K\times D^\sigma$
maps $K_i$ to $K\times D^i\times\{\infty\}=K\times D^i$.
\end{proof}

\proof[Proof of Proposition \ref{PropHomeo}.] The condition is obviously necessary.

Conversely, set $K_i=C_{\sigma-i}$, and use Lemma \ref{LemOmegaMuK} to get a homeomorphism $\Cond(X)\to\Cond(Y)$ preserving the extended Cantor-Bendixson rank. The hypothesis on scattered parts and Pierce's theorem then allow to get the desired homeomorphism.
\endproof

\begin{lemma}
The spaces $D^m\times [n]$ ($m\ge 0,n\ge 1$), $D^m\times W$ ($m\ge 0$) and $K$ (the Cantor set) are pairwise non-homeomorphic.\label{nonhomeo}
\end{lemma}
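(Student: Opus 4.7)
The plan is to separate the three families by crude topological invariants, then distinguish spaces within each family by a Cantor-Bendixson argument.

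First, $K$ is perfect, whereas both $D^m\times [n]$ and $D^m\times W$ contain isolated points (take a product of isolated points of each factor; $W$ has isolated points since $W\setminus K'$ is a dense discrete subset, where $K'\subset W$ denotes the embedded Cantor subspace). Next, $D^m\times [n]$ is scattered as a finite product of scattered spaces, whereas $D^{m'}\times W$ contains the nonempty perfect subspace $D^{m'}\times K'$ and hence fails to be scattered. This separates the three families pairwise.

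Within the family $\{D^m\times [n]\}$, I would invoke Proposition \ref{mazur}: such a space has Cantor-Bendixson rank $m+1$ with exactly $n$ points of maximal Cantor-Bendixson rank. Both numbers are homeomorphism invariants, so the pair $(m,n)$ is determined.

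Within the family $\{D^m\times W\}$, the main work is to identify $\Cond(D^m\times W)=D^m\times K'$, and hence $\mathcal{I}(D^m\times W)=D^m\times (W\setminus K')$. The inclusion $D^m\times K'\subseteq \Cond(D^m\times W)$ holds because $D^m\times K'$ is perfect as a subspace (since $K'$ itself is perfect). Conversely, for $w\in W\setminus K'$ the singleton $\{w\}$ is open in $W$, so $D^m\times\{w\}\cong D^m$ is a scattered open neighbourhood of any $(x,w)$, placing $(x,w)$ in the scattered part. Using the same neighbourhood, the Cantor-Bendixson rank of any $(x,w)\in\mathcal{I}(D^m\times W)$ computed in $D^m\times W$ equals the number of coordinates of $x$ equal to $\infty$; hence the supremum of Cantor-Bendixson ranks over the scattered part is $m$, and this invariant determines $m$. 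The identification of $\Cond(D^m\times W)$ is the only step of substance; the remainder is a direct use of invariants set up in \S\ref{sec:toppre}.
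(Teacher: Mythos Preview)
Your argument is correct and follows essentially the same line as the paper's: separate $K$ by perfectness, distinguish the two remaining families (the paper uses countability where you use scatteredness, which is equivalent here), and then read off $m$ (and $n$) from the Cantor--Bendixson rank. Your treatment is more detailed than the paper's in that you explicitly identify $\Cond(D^m\times W)=D^m\times K'$ and verify the rank computation, but the underlying idea is identical.
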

\proof
The only perfect space here is $K$. The other uncountable ones are $D^m\times W$, which has Cantor-Bendixson rank $m+1$. The countable space $D^m\times [n]$ has Cantor-Bendixson rank $m+1$ and exactly $n$ points of maximal Cantor-Bendixson rank ($n$).
\endproof

\section{Generalities}\label{gen}

\subsection{Isolated points}

\begin{proposition}
Let $A$ be an abelian group and $S\in\SA$. Then $S$ is isolated in $\SA$ if and only if $S$ is finitely generated and $A/S$ is Artinian.
\end{proposition}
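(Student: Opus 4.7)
The plan is to translate the isolation of $S$ in the Chabauty topology into an algebraic condition on the quotient $A/S$. A basic neighborhood of $S$ consists of all $T\in\SA$ with $F_1\subset T$ and $F_2\cap T=\emptyset$, for some choice of finite $F_1\subset S$ and $F_2\subset A\setminus S$. So $S$ is isolated precisely when a pair $(F_1,F_2)$ can be chosen so as to make $S$ the unique subgroup of $A$ satisfying both conditions.

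First I would observe that the trial subgroup $T=\langle F_1\rangle\le S$ automatically lies in every such neighborhood (since $F_2\cap T\subset F_2\cap S=\emptyset$). Isolation therefore forces $\langle F_1\rangle=S$, proving that $S$ must be finitely generated. Once $F_1$ is taken to be a finite generating set of $S$, every competitor $T$ in the neighborhood contains $S$, and isolation becomes equivalent to the existence of a finite set $\bar F\subset(A/S)\setminus\{0\}$ meeting every non-trivial subgroup of $A/S$. The problem thus reduces to proving that an abelian group $B$ admits such a finite ``meeting set'' if and only if $B$ is Artinian.

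The easy direction is immediate: if $B$ is Artinian then $B=\bigoplus_{i=1}^h C_{p_i^\infty}\oplus F$ with $F$ finite, so $B$ is torsion and its socle---the set of elements of prime order together with $0$---is finite. Its non-zero elements form a valid $\bar F$, since every non-trivial subgroup of a torsion abelian group contains an element of prime order.

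For the converse I would argue by successive reductions, starting from a finite $\bar F$ meeting every non-trivial subgroup of $B$. If $B$ contained an element $b$ of infinite order, then $\langle nb\rangle$ would avoid $\bar F$ for $n$ large, so $B$ is torsion. If infinitely many $p$-primary components $B_p$ were non-zero, a prime $p$ coprime to every order appearing in $\bar F$, together with an element of order $p$ in $B_p$, would yield a cyclic subgroup of order $p$ disjoint from $\bar F$; hence only finitely many $B_p$ are non-zero, and it suffices to treat each $B_p$ separately. The hypothesis forces every cyclic subgroup of prime order inside $B_p$ to meet $\bar F\cap B_p$, so every non-zero element of the socle of $B_p$ is a scalar multiple of an element of $\bar F$, and the socle is a finite union of cyclic groups of order $p$, hence finite. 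The main obstacle is the concluding algebraic fact: a $p$-torsion abelian group $B_p$ with finite socle is Artinian. Splitting $B_p=D_p\oplus R_p$ into divisible and reduced parts, $D_p$ has the same finite socle so $D_p\cong C_{p^\infty}^k$ for some finite $k$; the remaining step is the lemma that a reduced $p$-group with finite socle is finite, which I would prove from the inequality $|R_p[p^{n+1}]|\le |R_p[p]|\cdot|R_p[p^n]|$ together with a pigeonhole on the images $p^{n-1}g_n$ in the finite socle (for $g_n\in R_p$ of exact order $p^n$), producing an element of infinite $p$-height---a contradiction to reducedness.
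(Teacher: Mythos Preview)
The paper does not prove this proposition directly; it simply cites Lemma~1.3, Proposition~2.1 and Lemma~4.1 of \cite{CGP07}. Your self-contained argument is therefore not comparable in approach, but it is essentially correct: the translation of isolation into the existence of a finite ``meeting set'' in $A/S$, and the successive reductions down to the statement that an abelian $p$-group with finite socle is Artinian, are all sound.

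There is, however, a slip at the very last step. Producing a nonzero element $x\in R_p$ of infinite $p$-height is \emph{not} by itself a contradiction to reducedness: there exist reduced abelian $p$-groups with nonzero first Ulm subgroup $\bigcap_n p^n R_p$. What saves your argument is the extra information you already established, that each $R_p[p^n]$ is finite. With this you can upgrade ``$x$ has infinite height'' to ``$x$ lies in a copy of $C_{p^\infty}$'': for each $k\ge 1$ the set $T_k$ of elements $z$ of exact order $p^k$ with $p^{k-1}z=x$ is nonempty (your pigeonhole gives this for arbitrarily large $k$, hence for all $k$ by taking $p$-th powers) and finite (it lies in $R_p[p^k]$); the relation $pz'=z$ organises $\bigcup_kT_k$ into a finitely branching tree of infinite height, and K\"onig's lemma yields an infinite branch $x=z_1,z_2,\ldots$ with $pz_{k+1}=z_k$, whose union is a copy of $C_{p^\infty}$ inside $R_p$---now a genuine contradiction to reducedness. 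A cleaner alternative bypasses the pigeonhole entirely: a $p$-group whose socle has $\Z/p$-dimension $n$ embeds, as an essential extension of its socle, into the injective hull $C_{p^\infty}^{\,n}$ and is therefore Artinian; reduced and Artinian then gives finite.
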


This follows from Lemma 1.3, Proposition 2.1 and Lemma 4.1 in \cite{CGP07}.

\begin{corollary}\label{cor:existisolated}
Let $A$ be an abelian group. Then $\SA$ has isolated points (i.e. is non-perfect) if and only if $A$ is minimax. In this case, isolated points in $\SA$ form exactly one commensurability class, namely the lattices in $A$.\label{i}
\end{corollary}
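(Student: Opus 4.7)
My plan is to use the preceding Proposition (the characterization of isolated points as finitely generated subgroups with Artinian quotient) as a black box and then unpack the definition of ``minimax'' and ``lattice''.

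First, by the Proposition, $\SA$ admits an isolated point if and only if there exists $S\le A$ such that $S$ is finitely generated and $A/S$ is Artinian. By the definitions given in the introduction, such an $S$ is exactly a lattice in $A$, and the existence of a lattice is exactly the statement that $A$ is minimax. This gives both directions of the equivalence at once, and moreover identifies the set of isolated points with the set of lattices in $A$.

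It remains to show that, when $A$ is minimax, any two lattices are commensurable and, conversely, any subgroup of $A$ commensurable with a lattice is itself a lattice. For the first assertion, take lattices $Z_1,Z_2\le A$. The natural map $Z_1/(Z_1\cap Z_2)\hookrightarrow A/Z_2$ realizes $Z_1/(Z_1\cap Z_2)$ as a subgroup of an Artinian group, hence it is Artinian; being also a quotient of the finitely generated group $Z_1$, it is a finitely generated Artinian abelian group, hence finite. Symmetrically $Z_2/(Z_1\cap Z_2)$ is finite, so $Z_1$ and $Z_2$ are commensurable.

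For the converse direction, let $Z$ be a lattice and $S\le A$ be commensurable with $Z$. Then $S\cap Z$ has finite index in $Z$, hence is finitely generated (subgroup of a finitely generated abelian group), and has finite index in $S$, so $S$ itself is finitely generated. Finally, from the short exact sequence
\[
0\longrightarrow Z/(S\cap Z)\longrightarrow A/(S\cap Z)\longrightarrow A/Z\longrightarrow 0,
\]
in which the left-hand term is finite and the right-hand term is Artinian, we conclude that $A/(S\cap Z)$ is Artinian; consequently its quotient $A/S$ is Artinian, so $S$ is a lattice. The main (minor) subtlety is being careful that commensurability goes both ways in the exact-sequence argument; once that is set up, everything follows mechanically from ``subgroups and quotients of Artinian abelian groups are Artinian'' and ``subgroups and quotients of finitely generated abelian groups are finitely generated''.
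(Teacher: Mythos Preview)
Your proof is correct and is exactly the natural unpacking of the corollary from the preceding Proposition; the paper itself provides no argument, treating the corollary as immediate from the Proposition and the definitions of ``minimax'' and ``lattice''. Your added verification that the set of lattices is closed under commensurability (steps 3 and 4) is precisely what is needed to justify the phrase ``exactly one commensurability class'', and the arguments you give are standard and clean.
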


\begin{corollary}\label{cor:cantor}
If a countable abelian group $A$ is not a minimax group, then $\SA$ is homeomorphic to a Cantor set.
\end{corollary}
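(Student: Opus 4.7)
The plan is to invoke the classical topological characterization of the Cantor set: a non-empty compact metrizable totally disconnected space with no isolated points is homeomorphic to $\{0,1\}^\N$ (this is cited in Paragraph \ref{cha} as \cite[Theorem 7.4]{Kec95}). So the work reduces to verifying that $\SA$ has each of these four properties.

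First I would recall, as done in the Introduction, that $\SA$ is a closed subspace of the product $2^A$ with the Tychonov topology. This immediately gives compactness, Hausdorffness, and total disconnectedness, since $2^A$ is a Boolean space and closed subspaces inherit all three properties. Since $A$ is assumed countable, $2^A$ is metrizable (a countable product of the two-point discrete space), so $\SA$ is metrizable as well; moreover it is non-empty, since $\{0\}\in\SA$.

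The only remaining point is perfectness, and this is precisely what Corollary \ref{cor:existisolated} delivers: $\SA$ has an isolated point if and only if $A$ is minimax. Since by hypothesis $A$ is not minimax, $\SA$ has no isolated point, hence is perfect. Applying the Cantor set characterization concludes the proof.

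There is essentially no obstacle: all the substance is packed into Corollary \ref{cor:existisolated}, and the rest is a direct appeal to a standard characterization theorem. The proof in the paper is therefore expected to be only a couple of lines.
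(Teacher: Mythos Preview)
Your proposal is correct and matches the paper's approach exactly: the paper states Corollary \ref{cor:cantor} immediately after Corollary \ref{cor:existisolated} with no written proof, treating it as an immediate consequence of the latter together with the classical characterization of the Cantor set recalled in Paragraph \ref{cha}. Your write-up simply spells out what the paper leaves implicit.
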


These two corollaries settle Proposition \ref{PropNonNA}.

\subsection{Uncountable groups}\label{subs:uncount}

\begin{proof}[Proof of Proposition \ref{uncountable}]
If $A/T_A$ has cardinality $\alpha$, then $A$ contains a copy of $\Z^{(\alpha)}$. Otherwise, $T_A$ has cardinality $\alpha$, and denoting by $A_p$ the $p$-torsion in $A$, the direct sum $\sum_pA_p$ can be written as a direct sum of $\alpha$ cyclic subgroups of prime order.
So in both cases, $A$ contains a subgroup isomorphic to a direct sum of non-trivial (cyclic) subgroups $\bigoplus_{i\in\alpha}S_i$. The mapping $2^A\to\SA$, $J\mapsto\bigoplus_{j\in J}S_j$ is the desired embedding.
\end{proof}

\begin{lemma}Let $\alpha$ be an infinite cardinal. The least cardinal for a basis of open sets in $2^\alpha$ is $\alpha$.\label{topbasis}
\end{lemma}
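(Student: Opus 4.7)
The plan is to prove the two inequalities separately.

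For the upper bound, I would simply exhibit the standard basis of cylinder sets: for every finite $F\subset\alpha$ and every $\epsilon:F\to\{0,1\}$, the set $U_{F,\epsilon}=\{x\in 2^\alpha:x|_F=\epsilon\}$ is clopen, and these sets form a basis of the product topology. Since $\alpha$ is infinite, the set of finite subsets of $\alpha$ has cardinality $\alpha$, and for each such $F$ there are only finitely many $\epsilon$, so this basis has cardinality $\alpha$.

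For the lower bound, I would argue by contradiction: assume $\mathcal{B}$ is a basis with $|\mathcal{B}|<\alpha$. Let $x_0\in 2^\alpha$ be the constantly zero function, and for $i\in\alpha$ let $V_i=\{x\in 2^\alpha:x(i)=0\}$, which is an open neighbourhood of $x_0$. For each $i$ pick $B_i\in\mathcal{B}$ with $x_0\in B_i\subset V_i$. Since $|\mathcal{B}|<\alpha$ and $\alpha$ is an infinite cardinal, the map $i\mapsto B_i$ cannot have all finite fibres (else $\alpha\le\max(|\mathcal{B}|,\aleph_0)<\alpha$), so some $B\in\mathcal{B}$ equals $B_i$ for all $i$ in some infinite subset $I\subset\alpha$. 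Then $B\subset\bigcap_{i\in I}V_i=\{x:x(i)=0\text{ for all }i\in I\}$.

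The final step is to observe that this is impossible: $B$ is a non-empty open set (it contains $x_0$), yet no non-empty open subset of $2^\alpha$ can be contained in $\bigcap_{i\in I}V_i$ when $I$ is infinite. Indeed, any basic neighbourhood of a point in $B$ only constrains finitely many coordinates, so one can always flip a coordinate $i\in I$ lying outside this finite set, contradicting containment in $V_i$. The only mildly delicate point is the cardinal-arithmetic pigeonhole, which must be phrased so as to handle singular $\alpha$ as well; everything else is direct.
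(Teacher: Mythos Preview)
Your argument is correct, but the lower-bound half takes a different route from the paper's. The paper argues via density: from a basis of size $\beta$ one extracts a dense subset $\mathcal{D}$ of the finite-support subspace $2^{(\alpha)}$ with $|\mathcal{D}|\le\beta$, and then observes that the (finite) supports of the elements of $\mathcal{D}$ must union to all of $\alpha$, forcing $\beta\ge\alpha$. Your argument instead works locally at the single point $x_0$, using a pigeonhole on the $\alpha$ sub-basic neighbourhoods $V_i$. Both are short; the paper's route is in the spirit of the general weight--density inequality, while yours is more self-contained and avoids the density detour.

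One small correction: your parenthetical justification ``$\max(|\mathcal{B}|,\aleph_0)<\alpha$'' fails as written when $\alpha=\aleph_0$, since then $|\mathcal{B}|$ is finite but $\max(|\mathcal{B}|,\aleph_0)=\aleph_0=\alpha$. Of course in that case the finite pigeonhole gives an infinite fibre immediately, so the conclusion stands; just treat $\alpha=\aleph_0$ separately. Your stated concern about singular $\alpha$, on the other hand, is not an issue: for $\alpha>\aleph_0$, both $|\mathcal{B}|$ and $\aleph_0$ are strictly below $\alpha$, and the maximum of two cardinals below $\alpha$ is below $\alpha$ regardless of cofinality.
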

\proof
The natural basis of topological space $2^\alpha$ has cardinality $\alpha$. Conversely, if a basis has cardinality $\beta$, then it provides a basis of the dense subset $2^{(\alpha)}$. Thus $2^{(\alpha)}$ contains a dense subset $\mathcal{D}$ of cardinality not greater than $\beta$. The union of (finite) supports of all $f\in \mathcal{D}$ must be all of $\alpha$, so $\beta\ge\alpha$.\endproof

\begin{proposition}
Let $\alpha$ be an infinite cardinal, and $A$ an abelian group of cardinal $\alpha$. The least cardinal for a basis of open sets in $\SA$ is $\alpha$.
\end{proposition}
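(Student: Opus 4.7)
The plan is to prove both inequalities separately, treating the countable and uncountable cases with a unified upper bound but slightly different lower-bound arguments.

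For the upper bound, I would note that $\SA$ is a subspace of the product space $2^A$. The latter carries its natural basis formed by cylinders $\{f \in 2^A : f|_F = \varphi\}$ indexed by pairs $(F,\varphi)$ where $F$ is a finite subset of $A$ and $\varphi : F \to \{0,1\}$. The cardinality of this basis is the number of finite subsets of $A$ times $2^{|F|}$ summed, which equals $\alpha$ since $\alpha$ is infinite. Taking traces with $\SA$ gives a basis for $\SA$ of cardinality at most $\alpha$.

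For the lower bound, the case $\alpha = \aleph_0$ is automatic: since $A$ is infinite abelian, $\SA$ is infinite (e.g.\ it contains the infinitely many subgroups $nB$ for any infinite cyclic or quasi-cyclic subgroup $B \le A$), and any Hausdorff space with a finite basis is finite. So a basis of $\SA$ has cardinality at least $\aleph_0 = \alpha$.

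For the lower bound in the uncountable case, I would invoke Proposition \ref{uncountable}: $\SA$ contains a subspace $Y$ homeomorphic to $2^\alpha$. Any basis $\mathcal{B}$ of $\SA$ restricts via $U \mapsto U \cap Y$ to a basis of $Y$ whose cardinality is at most $|\mathcal{B}|$. By Lemma \ref{topbasis}, a basis of $2^\alpha$ has cardinality at least $\alpha$, hence $|\mathcal{B}| \ge \alpha$. Combined with the upper bound, this gives equality. The main subtlety to be careful with is just the observation that restricting a basis of a space to any subspace yields a basis of the subspace (of no larger cardinality), which is immediate from the definition of the subspace topology; there is no genuine obstacle once Proposition \ref{uncountable} and Lemma \ref{topbasis} are in hand, making this a short deduction.
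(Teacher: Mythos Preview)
Your approach matches the paper's: sandwich $\SA$ between $2^\alpha$ (as ambient space $2^A$) and an embedded copy of $2^\alpha$ (via Proposition~\ref{uncountable}), then invoke Lemma~\ref{topbasis}; you additionally spell out the countable case, which the paper's one-line proof leaves implicit since Proposition~\ref{uncountable} is stated only for uncountable $\alpha$. One minor correction to your parenthetical: not every infinite abelian group contains an infinite cyclic or quasi-cyclic subgroup (consider an infinite elementary abelian $p$-group), and for quasi-cyclic $B$ one has $nB=B$ for all $n\ge 1$; the assertion that $\SA$ is infinite for infinite $A$ is nonetheless a standard fact, so the argument stands.
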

\proof
As we have topological embeddings $2^{\alpha}\subset \SA\subset 2^\alpha$ (the right-hand one being the inclusion $\SA\subset 2^A$, the left-hand one following from Proposition \ref{uncountable}), this follows from Lemma \ref{topbasis}.
\endproof

\section{The weight function on $\SA$}\label{wei}

In this section, all minimax groups are assumed abelian.

\subsection{Critical primes, idle subgroups and parallelism}

\begin{definition}
Let $A$ be a minimax group. Two subgroups $S,S'$ are said to be
\emph{parallel} if they have a common lattice, and
$\ell_p(S)=\ell_p(S')$ for every prime $p$. 
\end{definition}

Clearly, this is an
equivalence relation. Commensurable implies parallel; 
the obstruction to the converse comes from what we call critical primes.

\begin{definition}[Strong Criticality] \label{DefiStrongCritic}
Let $A$ be a minimax group and let $S \le A$.
\begin{itemize}
\item The subgroup $S$ is $p$-critical if $p$ is a critical prime \\(i.e. $\ell_p(A)\ge
2$) and $\ell_p(S)>0$.
\item The subgroup $S$ is strongly $p$-critical if $\ell_p(S)>0$ and \\$\tau_p(A/S)>0$.
\end{itemize}
\end{definition}

\begin{definition}
Let $A$ be a minimax group. A subgroup $S$ of $A$ is \textit{idle} if any subgroup parallel to $S$ is commensurable to $S$.
\end{definition}

\begin{lemma}\label{Lem parallnotstrong}
Let $A$ be a minimax group and let $S\le A$. The following are equivalent:
\begin{itemize}
\item $S$ is idle;
\item $S$ is not strongly $p$-critical for any critical prime $p$.
\end{itemize}
\end{lemma}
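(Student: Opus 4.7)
The plan is to prove both directions separately, using throughout the standard additivity of $\ell_q$ on short exact sequences of minimax abelian groups: $\ell_q(A) = \ell_q(B) + \ell_q(A/B)$ for every $B \le A$ and every prime $q$ (this can be read off the divisible part of the Artinian quotient by a lattice).

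\emph{Forward direction (contrapositive): strongly $p$-critical with $p$ critical implies non-idle.} Suppose $\ell_p(S) \ge 1$ and $\tau_p(A/S) \ge 1$. Then $T_{A/S}$, an Artinian torsion group with $\ell_p \ge 1$, contains a copy of $C_{p^\infty}$; let $T \le A$ be its preimage, so $S \le T$ with $T/S \cong C_{p^\infty}$. Let $\phi \colon T \twoheadrightarrow T/S$. Since $\ell_p(S) \ge 1$, there is a surjection $\psi \colon S \twoheadrightarrow C_{p^\infty}$, which extends by divisibility of $C_{p^\infty}$ to $\phi' \colon T \to C_{p^\infty}$. The map $(\phi, \phi') \colon T \to C_{p^\infty}^2$ is then surjective, since $\phi$ is surjective and $\phi'|_S = \psi$ is surjective (so the image contains both $C_{p^\infty} \times \{0\}$ and $\{0\} \times C_{p^\infty}$). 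Put $S' := \ker \phi'$. The two sequences $0 \to S \to T \to C_{p^\infty} \to 0$ and $0 \to S' \to T \to C_{p^\infty} \to 0$ give, by additivity, $\ell_q(S') = \ell_q(S)$ for every $q$; and if $Z_0$ is any lattice of $S$ (automatically a lattice of $T$, since $T/S$ is Artinian), then $Z_0 \cap S'$ is a common lattice of $S$ and $S'$, so $S$ and $S'$ are parallel. Finally, surjectivity of $(\phi, \phi')$ yields $\phi(S') = C_{p^\infty}$, hence $S + S' = T$, so $S'/(S \cap S') \cong T/S \cong C_{p^\infty}$ is infinite: $S$ and $S'$ are not commensurable.

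\emph{Backward direction: non-strongly-$p$-critical for every critical $p$ implies idle.} Let $S'$ be parallel to $S$ with common lattice $Z$; suppose for contradiction $S$ and $S'$ are not commensurable. The Artinian quotients $(S + S')/S \cong S'/(S \cap S')$ and $(S + S')/S' \cong S/(S \cap S')$ (Artinian as quotients of $S'/Z$ and $S/Z$) cannot both be finite. Additivity in $0 \to S \to S+S' \to (S+S')/S \to 0$ and its analogue for $S'$, combined with $\ell_q(S) = \ell_q(S')$, forces $\ell_q((S+S')/S) = \ell_q((S+S')/S')$ for every $q$; picking $p$ where this common value is $\ge 1$, both quotients contain $C_{p^\infty}$. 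From $C_{p^\infty} \hookrightarrow (S+S')/S \le A/S$ we get $\tau_p(A/S) \ge 1$, and from the surjection $S \twoheadrightarrow S/(S \cap S') \supset C_{p^\infty}$, together with monotonicity of $\ell_p$ under quotients, we get $\ell_p(S) \ge 1$. Then $\ell_p(A) = \ell_p(S) + \ell_p(A/S) \ge 2$, so $p$ is critical and $S$ is strongly $p$-critical, contradicting the hypothesis.

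\emph{Main obstacle.} The one delicate step is finding the second surjection $\phi'$ in the forward direction; the key insight is that $\ell_p(S) \ge 1$ itself yields $\psi \colon S \twoheadrightarrow C_{p^\infty}$, which extends to $T$ by divisibility of $C_{p^\infty}$ and automatically makes $(\phi, \phi')$ surjective. Everything else is routine bookkeeping with the additivity of $\ell_q$.
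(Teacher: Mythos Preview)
Your proof is correct, and both directions take a different route from the paper's.

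For the ``idle'' direction, the paper first quotients by a common lattice to reduce to the case where $S$ and $S'$ are torsion, then does a prime-by-prime case analysis to show $\Div(S)=\Div(S')$ (splitting into the cases $p$ non-critical, $p$ critical with $\tau_p(S)=0$, and $p$ critical with $\tau_p(S)=\tau_p(A)$). Your argument via $S+S'$ and additivity of $\ell_q$ bypasses both the reduction and the case split: it is shorter and more symmetric, and it yields the critical prime $p$ directly as a witness to strong $p$-criticality.

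For the ``non-idle'' direction, the paper goes \emph{down} first: it takes $S' \le S$ with $S/S' \cong C_{p^\infty}$, observes $\tau_p(A/S') \ge 2$, and then picks a second copy $L/S'$ of $C_{p^\infty}$ inside $A/S'$ distinct from $S/S'$; the parallel non-commensurable subgroup is $L$. You instead go \emph{up} first: you build $T \ge S$ with $T/S \cong C_{p^\infty}$ using $\tau_p(A/S) \ge 1$, then extend a surjection $S \twoheadrightarrow C_{p^\infty}$ to $T$ via divisibility and take its kernel. The two constructions are dual in spirit; yours makes the parallelism and non-commensurability of $S'$ transparent through the surjectivity of $(\phi,\phi')$, whereas the paper leaves the verification that $L$ is parallel to $S$ somewhat implicit.
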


\proof Suppose that $S$ is not strongly critical for any critical prime $p$. Let $S'$ be a
subgroup of $A$ parallel to $S$. Replacing $S$ and $S'$ by $S/Z$ and
$S'/Z$ where $Z$ is a common lattice, we can assume
that $S,S'$ are (Artinian) torsion subgroups of $A$.

Let us show that $\Div(S)=\Div(S')$, which clearly proves that $S'$ is
commensurable to $S$. Let $p$ be a prime such that $\tau_p(A)>0$.

If $p$ is non critical (i.e. $\tau_p(A)=1$), then either
$\tau_p(S)=\tau_p(S')=1$ and hence $\Div(S) \cap \Div(S')$ contains
the divisible part $\Div(A_p)$ of the $p$-component $A_p$ of $A$ or
$\tau_p(S)=\tau_p(S')=0$ and neither $\Div(S)$ nor $\Div(S')$ contains
this part.

If $p$ is critical, then there are two cases (remind that $S$ is
not strongly $p$-critical for any critical prime $p$):

Case 1: $\tau_p(S)=0$. Then $\tau_p(S')=0$ and hence both $\Div(S)$
and $\Div(S')$ intersect trivially $\Div(A_p)$.

Case 2: $\tau_p(S)=\tau_p(A)$. Then $\tau_p(S')=\tau_p(A)$ and hence
$\Div(S) \cap \Div(S') \supset \Div(A_p)$. All in all, this shows that
$\Div(S)=\Div(S')$.

Conversely, suppose that $S$ is strongly $p$-critical for some critical prime
$p$. Let $S'$ be the kernel of a homomorphism from $S$ onto $\Cp$.
Since $S/S'$ is torsion, the natural map $A/S' \longrightarrow A/S$
maps $T_{A/S'}$ onto $T_{A/S}$. As $\ell_p(S/S')=1$ we have then
$\tau_p(A/S')=\tau_p(A/S)+1$. As $S$ is strongly $p$-critical, we
have $\tau_p(A/S') \ge 2$. So $A/S'$ contains a subgroup $L/S'$
isomorphic to $\Cp$, which is not equal (and therefore not
commensurable) to $S/S'$. So $L$ is parallel but not commensurable
to $S$.
\endproof

\subsection{The weight function and semi-continuity}

\begin{lemma}\label{suradd}
Let $A$ be a minimax group and $p$ a prime. Then for every $S\in\SA$, we have
$$\tau_p(S)+\tau_p(A/S)\ge\tau_p(A),$$ with equality if $S$ is torsion.\end{lemma}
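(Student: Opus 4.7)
The plan is to reduce the inequality to a statement about Artinian $p$-groups. Writing $B^{(p)}$ for the $p$-primary component of a torsion abelian group $B$, one has $\tau_p(B) = \ell_p(T_B^{(p)})$ when $B$ is minimax, since all the $\Cp$-summands of $\Div(T_B)$ live in the $p$-primary part.

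The key preparatory step, and the main technical obstacle, is to show that $\ell_p$ is additive on short exact sequences $0 \to U \to V \to W \to 0$ of Artinian $p$-groups (and consequently monotone under inclusion). I would verify this by checking that taking divisible parts preserves exactness: $\Div(U) = U \cap \Div(V)$, since both sides are the largest divisible subgroup of $U$; and the image of $\Div(V)$ in $W$ equals $\Div(W)$, because it is divisible while the cokernel is a quotient of the finite group $V/\Div(V)$, hence a finite divisible group, hence trivial. The equality $\ell_p(V) = \ell_p(U) + \ell_p(W)$ then reduces to counting $\Cp$-summands in divisible $p$-groups.

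With this in hand, I would apply additivity to the exact sequence $0 \to T_S^{(p)} \to T_A^{(p)} \to T_A^{(p)}/T_S^{(p)} \to 0$, using $T_S^{(p)} = S \cap T_A^{(p)}$, to obtain $\tau_p(A) = \tau_p(S) + \ell_p(T_A^{(p)}/T_S^{(p)})$. The composition $T_A^{(p)} \hookrightarrow A \twoheadrightarrow A/S$ has kernel $T_S^{(p)}$ and image in $T_{A/S}^{(p)}$, producing an embedding $T_A^{(p)}/T_S^{(p)} \hookrightarrow T_{A/S}^{(p)}$. Monotonicity then gives $\ell_p(T_A^{(p)}/T_S^{(p)}) \le \tau_p(A/S)$, whence the desired inequality.

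For the equality case, when $S$ is torsion we have $S \le T_A$. Since the primary components of a torsion element are $\Z$-multiples of itself (via Bezout on its annihilator), every subgroup of $T_A = \bigoplus_q T_A^{(q)}$ respects the primary decomposition, so $S = \bigoplus_q S^{(q)}$ with $S^{(q)} = S \cap T_A^{(q)}$. Consequently $T_A/S \cong \bigoplus_q T_A^{(q)}/S^{(q)}$, and its $p$-primary part is exactly $T_A^{(p)}/T_S^{(p)}$. Moreover $T_{A/S} = T_A/S$ because $A/T_A$ is torsion-free, so $T_{A/S}^{(p)} = T_A^{(p)}/T_S^{(p)}$; the embedding above becomes an equality, forcing $\tau_p(S) + \tau_p(A/S) = \tau_p(A)$.
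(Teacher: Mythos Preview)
Your overall strategy is sound and closely parallels the paper's: both arguments hinge on additivity of $\ell_p$ (equivalently, of $\tau_p$) along short exact sequences with torsion kernel, and both then compare $T_A^{(p)}/T_S^{(p)}$ with $T_{A/S}^{(p)}$. The paper simply asserts this additivity as ``clear'' and proceeds to the sharper identity $\tau_p(A/S)+\tau_p(S)=\tau_p(A)+\tau_p\bigl(A/(S+T_A)\bigr)$, whereas you attempt to prove additivity and then bound via the embedding $T_A^{(p)}/T_S^{(p)}\hookrightarrow T_{A/S}^{(p)}$. Steps 2--4 of your outline are correct.

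However, your proof of additivity contains a genuine error. The assertion $\Div(U)=U\cap\Div(V)$ is false: take $V=\Cp$ and $U=V[p]\cong\Z/p\Z$; then $\Div(V)=V$, so $U\cap\Div(V)=U\neq 0=\Div(U)$. More generally, $U\cap\Div(V)$ need not be divisible, so it is not ``the largest divisible subgroup of $U$''. Consequently the functor $\Div$ does \emph{not} preserve exactness of $0\to U\to V\to W\to 0$, even for Artinian $p$-groups, and your derivation of $\ell_p(V)=\ell_p(U)+\ell_p(W)$ breaks down. The conclusion is nonetheless true; one clean fix is to observe that for an Artinian $p$-group $B$ one has $\ell_p(B)=\dim_{\Z/p\Z}B[p]-\dim_{\Z/p\Z}(B/pB)$, and then apply the snake lemma to multiplication by $p$ on $0\to U\to V\to W\to 0$ to obtain the six-term exact sequence of $\Z/p\Z$-vector spaces
\[
0\to U[p]\to V[p]\to W[p]\to U/pU\to V/pV\to W/pW\to 0,
\]
whose alternating sum of dimensions yields the desired additivity.
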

\proof
It is clearly an equality when the kernel is torsion. Apply this to the exact sequence
$$0\to T_A/(T_A \cap S)\to A/S\to A/(S+T_A)\to 0$$
to get (note that $T_A \cap S=T_S$) $$\tau_p(A/S)=\tau_p(A/(S+T_A))+\tau_p(T_A/T_S).$$
Again, using additivity in the torsion case, $$\tau_p(T_A/T_S)=\tau_p(A)-\tau_p(S).$$
Thus $$\tau_p(A/S)+\tau_p(S)=\tau_p(A)+\tau_p(A/(S+T_A)).\qedhere$$
\endproof

The following lemma is straightforward.

\begin{lemma} \label{rusc0}
Let $A$ be an abelian group. The map $S\mapsto r(S)$ is lower semi-continuous on $\mathcal{S}(A)$. In particular, if $r(A)<\infty$, then the map $S\mapsto r(A/S)$ is upper semi-continuous on $\mathcal{S}(A)$.\label{gammausc0}\qed
\end{lemma}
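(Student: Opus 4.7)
The plan is to unwind the definition of the Chabauty topology on $\SA$: a net $(S_i)$ converges to $S$ if and only if, for each $g\in A$, the statements $g\in S$ and $g\in S_i$ agree eventually. In particular, for any finite subset $F\subset A$, the set $\{S'\in\SA:F\subset S'\}$ is an open neighbourhood of every $S$ containing $F$.

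For the first assertion, I will argue that for every integer $n\le r(S)$ there is a neighbourhood of $S$ on which $r\ge n$. Given such $n$, pick $g_1,\dots,g_n\in S$ which are $\Z$-linearly independent. The set $V=\{S'\in\SA:g_1,\dots,g_n\in S'\}$ is open and contains $S$, and any $S'\in V$ still contains the linearly independent family $g_1,\dots,g_n$, so $r(S')\ge n$. This shows $r$ is lower semi-continuous (taking values in $\N\cup\{\infty\}$, with the usual order topology on the codomain).

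For the second assertion, recall that tensoring with $\Q$ is exact, so from the short exact sequence $0\to S\to A\to A/S\to 0$ one gets $r(A)=r(S)+r(A/S)$. Under the hypothesis $r(A)<\infty$, this gives $r(A/S)=r(A)-r(S)$, so $S\mapsto r(A/S)$ is obtained from the lower semi-continuous function $S\mapsto r(S)$ by subtraction from the constant $r(A)$, hence is upper semi-continuous.

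There is no real obstacle here: both parts are immediate consequences of the definition of the Chabauty topology together with the exactness of $-\otimes_{\Z}\Q$. The only subtlety worth a line is that the rank additivity used in the second step really does require $r(A)<\infty$ so that subtraction is well defined; otherwise the statement about $r(A/\cdot)$ need not make sense.
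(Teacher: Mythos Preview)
Your argument is correct and is exactly the routine verification the paper has in mind: the lemma is stated with a \qed\ and prefaced by ``The following lemma is straightforward,'' so no proof is given there. Your unwinding of the Chabauty topology via the open sets $\{S'\in\SA: g_1,\dots,g_n\in S'\}$ and the rank additivity $r(A)=r(S)+r(A/S)$ is precisely the intended justification.
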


\begin{lemma}
Let $A$ be an abelian group with finitely many elements of order $p$ (e.g. $A$ is minimax). Then the map $S\mapsto\tau_p(S)$ is upper semi-continuous on $\mathcal{S}(A)$.\label{taupusc0}
\end{lemma}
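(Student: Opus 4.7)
My plan is to prove directly that for every integer $k\ge 0$ the super-level set $\{S\in\SA : \tau_p(S)\ge k+1\}$ is closed, which is equivalent to $\tau_p$ having a local maximum at every point of $\SA$. The first step is a concrete description of $\tau_p(S)=\ell_p(T_S)$: because $|A[p]|<\infty$, the $p$-primary torsion subgroup of $S$ has finite socle and decomposes as a finite sum of cyclic and quasi-cyclic $p$-groups, so $\ell_p$ counts its $C_{p^\infty}$-summands, which is the $\mathbf{F}_p$-dimension of the socle of its divisible part. A short verification gives
\[
\tau_p(S)\;=\;\dim_{\mathbf{F}_p}\Bigl(A[p]\cap\bigcap_{n\ge 1}p^n S\Bigr),
\]
the point being that for $x\in S$ of order dividing $p$, any $p^n$-root of $x$ in $S$ has order dividing $p^{n+1}$ and hence sits automatically in $T_S$, while divisibility by integers prime to $p$ is free.

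Next I would use finiteness of $A[p]$ a first time: only finitely many $\mathbf{F}_p$-subspaces $F\le A[p]$ have dimension $k+1$, and the super-level set is the union of the sets
\[
V_F\;=\;\bigcap_{n\ge 1,\,f\in F}\{S\in\SA : f\in p^n S\}
\]
over such $F$. Thus it suffices to show that each $\{S : f\in p^n S\}$ is closed. For this, observe that $f\in p^n S$ if and only if $S$ meets the set $Y_{f,n}:=\{z\in A : p^n z=f\}$, which is empty or a coset of $A[p^n]$. A short induction on $n$ using the exact sequence $0\to A[p]\to A[p^{n+1}]\xrightarrow{p}A[p^n]$ gives $|A[p^n]|\le|A[p]|^n<\infty$, so $Y_{f,n}$ is a finite subset of $A$.

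The conclusion is then immediate: the complement
\[
\{S\in\SA : S\cap Y_{f,n}=\varnothing\}\;=\;\bigcap_{z\in Y_{f,n}}\{S\in\SA : z\notin S\}
\]
is a finite intersection of subbasic clopens of $2^A$, hence open, so $\{S : f\in p^n S\}$ is closed. Intersecting over $n$ and $f\in F$ and then taking the finite union over $F$ shows that $\{S : \tau_p(S)\ge k+1\}$ is closed. The main obstacle, and the only real place the hypothesis is used, is upgrading the condition ``$f\in p^n S$'' from the a priori $G_\delta$ condition it would be in the Chabauty topology to a clopen condition; this needs the fibers $Y_{f,n}$ to be finite, which is exactly what $|A[p]|<\infty$ delivers via the inductive bound on $|A[p^n]|$.
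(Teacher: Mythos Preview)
Your proof is correct and follows a genuinely different route from the paper's. The paper argues pointwise: given $S$, it first passes to the quotient $A/M$ by a finite subgroup $M\le (T_S)_p$ chosen so that $(T_S)_p$ becomes divisible; in that reduced situation $S$ contains exactly $p^{\tau_p(S)}-1$ elements of order $p$, and the clopen neighbourhood $\{S':S'\cap F=\emptyset\}$ (where $F$ is the finite set of order-$p$ elements of $A$ outside $S$) forces $S'[p]\subseteq S[p]$, whence $\tau_p(S')\le\dim_{\mathbf F_p}S'[p]\le\tau_p(S)$.

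Your argument is instead global: the identity $\tau_p(S)=\dim_{\mathbf F_p}\bigl(A[p]\cap\bigcap_{n}p^nS\bigr)$ holds for every $S$ without any preliminary normalisation, and you then exhibit each super-level set as a finite union (over subspaces $F\le A[p]$) of countable intersections of clopens. The paper's proof is shorter and only invokes the finiteness of $A[p]$ directly; yours needs the easy consequence that every $A[p^n]$ is finite, but in exchange yields an explicit description of the closed sets $\{\tau_p\ge k\}$ and a formula for $\tau_p$ valid uniformly on $\SA$, with no quotient step. One small inaccuracy in your closing commentary: the condition ``$f\in p^nS$'' is a priori \emph{open} in the Chabauty topology (it is the union $\bigcup_{z\in Y_{f,n}}\{S:z\in S\}$ of basic clopens), not $G_\delta$; what the finiteness of $Y_{f,n}$ buys is the upgrade from open to clopen, which is exactly what you need for closedness of the super-level sets.
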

\begin{proof}
Note that $\tau_p(S)$ makes sense, since the $p$-component of $T_A$, that is, the set of elements whose order is a power of $p$, is Artinian.
Let $(T_A)_p$ be the $p$-component of the torsion in $A$, i.e. the set of element of $p$-prime order. Consider $S \le A$
and let us show that $\tau_p$ is upper semi-continuous at $S$.
There exists a finite subgroup $M$ of $(T_S)_p$ such that $(T_S)_p/M$ is divisible. So taking the quotient by $M$, we can assume that $(T_S)_p=S\cap (T_A)_p$ is divisible. Let $F$ be the set of elements of order $p$ in $A \setminus S$. Then $S$ contains exactly
$p^{\tau_p(S)}-1$ elements of order $p$. Therefore for any $S' \le
A$ with $S'\cap F=\emptyset$, we have $\tau_p(S')\le\tau_p(S)$.
\end{proof}

\begin{lemma} 
Let $A$ be an abelian group with finitely many elements of order $p$, and with $r(A)<\infty$ (e.g. $A$ is minimax). Then the map $S\mapsto\tau_p(A/S)$ is lower semi-continuous on $\mathcal{S}(A)$.\label{taulsc0}
\end{lemma}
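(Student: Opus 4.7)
The plan is to combine the identity
$$\tau_p(A/S) = \tau_p(A) - \tau_p(S) + \tau_p(A/(S+T_A))$$
established in the proof of Lemma \ref{suradd} with the upper semi-continuity of $S \mapsto \tau_p(S)$ proved in Lemma \ref{taupusc0}. The first term is constant, and $-\tau_p(S)$ is lower semi-continuous, so the problem reduces to showing that $S \mapsto \tau_p(A/(S+T_A))$ is lower semi-continuous on $\mathcal{S}(A)$. Writing $B := A/T_A$ (torsion-free of finite rank $r(A)$) and $\bar S := (S+T_A)/T_A$, this amounts to the lower semi-continuity of $S \mapsto \tau_p(B/\bar S)$.

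Fix $S_0 \in \mathcal{S}(A)$ and set $k := \tau_p(B/\bar{S_0})$. I will pull the embedding $\Cp^k \hookrightarrow B/\bar{S_0}$ back to a subgroup $M \le A$ with $M \supseteq S_0 + T_A$ and $M/(S_0+T_A) \cong \Cp^k$, and select witness lifts $y_{i,n} \in M$ ($i \le k$, $n \ge 1$) realizing $k$ independent Prüfer chains in $M/(S_0+T_A)$, so that $p y_{i,1}$ and $p y_{i,n+1}-y_{i,n}$ all lie in $S_0+T_A$ and no non-trivial $\mathbb{F}_p$-combination of the $\bar y_{i,1}$ lies in $\bar{S_0}$. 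Using the direct-sum decomposition $A = \Div(A_p) \oplus A'$ (with $A'$ of finite $p$-socle), the relations of the form ``$\cdot \in S_0+T_A$'' can be reparsed into genuinely clopen conditions of the form ``$\cdot \in S$'' by absorbing the torsion corrections into the $\Div(A_p)$ summand. The candidate neighborhood $V$ is then defined by these clopen membership conditions for $n \le N$, together with finitely many ``exclusion'' clauses of non-membership of specified elements in $S$.

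The main obstacle is to show that a \emph{finite} value of $N$ and a finite exclusion list already force $\tau_p(B/\bar S) \ge k$ throughout $V$. The clopen relations guarantee only the existence of $p^N$-th roots of the $\bar y_{i,1}$ in $B/\bar S$, whereas membership in the divisible $p$-part of $(B/\bar S)_p$ requires genuine $p^\infty$-divisibility. The resolution rests on the hypothesis $r(A) < \infty$: because $B$ is torsion-free of finite rank, $\dim_{\mathbb{F}_p}(B/\bar S)[p]$ is bounded uniformly in $\bar S$, so by adjoining enough exclusion conditions to $V$---playing a role dual to the finite set $F$ in the proof of Lemma \ref{taupusc0}---one can rule out the scenario in which the Prüfer chains drift into a growing reduced component of $(B/\bar S)_p$ instead of closing up inside its divisible part. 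This is the delicate step of the argument; once it is in place, one concludes that $\tau_p(B/\bar S) \ge k$ for every $S \in V$, completing the lower semi-continuity of $S \mapsto \tau_p(A/S)$.
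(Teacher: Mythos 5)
Your opening reduction is sound as far as it goes: the identity $\tau_p(A/S)=\tau_p(A)-\tau_p(S)+\tau_p(A/(S+T_A))$ together with Lemma \ref{taupusc0} does reduce the problem to the lower semi-continuity of $S\mapsto\tau_p(A/(S+T_A))$. But notice that this reduction buys essentially nothing: when $T_A=0$ the reduced statement \emph{is} the lemma, so you have traded the problem for one of the same difficulty. The entire content then sits in the step you yourself flag as ``delicate'' and do not carry out: showing that finitely many membership conditions (which only certify that the $\bar y_{i,1}$ admit $p^N$-th roots in $B/\bar S$) plus a finite exclusion list force the images to lie in a $k$-dimensional divisible part. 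This is a genuine gap, not a routine verification. For any finite $N$ there are a priori subgroups $S$ satisfying all the imposed clopen conditions for which the witnesses have $p$-height exactly $N$ and land in a large finite cyclic summand of $(B/\bar S)_p$, and nothing in the proposal identifies which finitely many elements to exclude to prevent this; the appeal to the uniform bound $\dim_{\mathbb{F}_p}(B/\bar S)[p]\le r(B)$ is a relevant observation but does not by itself convert finite divisibility into membership in $\Div((B/\bar S)_p)$. As written, the proof asserts its conclusion at the crucial moment.

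The paper avoids all of this with two short moves. First, it reduces to the case where $S_0$ is torsion: if $Z$ is a lattice in $S_0$, then $\{S: S\supseteq Z\}$ is a clopen neighbourhood of $S_0$ canonically homeomorphic to $\mathcal{S}(A/Z)$, and $\tau_p(A/S)$ is unchanged under this identification. Second, once $S_0$ is torsion, it writes $\tau_p(A/S)=f(S)+g(S)$ with $f(S)=\tau_p(A/S)+\tau_p(S)$ and $g(S)=-\tau_p(S)$: by Lemma \ref{suradd}, $f\ge\tau_p(A)$ everywhere with equality at the torsion subgroup $S_0$, so $f$ attains its global minimum at $S_0$ and is therefore trivially lower semi-continuous there, while $g$ is lower semi-continuous by Lemma \ref{taupusc0}. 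No analysis of $p$-heights or divisible parts of quotients is needed. I would encourage you to adopt this reduction-to-torsion step; without it, your argument is incomplete at its central point.
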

\begin{proof}
Note that the assumption on $A$ is inherited by its quotients (we need $r(A)<\infty$ here), so the map considered here makes sense. Let us check that this map is lower semi-continuous at $S_0$. We can suppose that $S_0$ is torsion. Indeed, let $Z$ be a lattice in $S_0$. Then $\mathcal{S}(A/Z)$ can be viewed as an open subset in $\SA$, $S_0$ corresponding to $S_0/Z$.
So assume that $S_0$ is torsion. We can write $\tau_p(A/S)=f(S)+g(S)$, with $f(S)=\tau_p(A/S)+\tau_p(S)$ and $g(S)=-\tau_p(S)$. By Lemma \ref{taupusc0}, $g$ is lower semi-continuous. By Lemma \ref{suradd}, since $S_0$ is torsion, $f$ takes its minimal value $\tau(A)$ at $S_0$, so is lower semi-continuous at $S_0$. 
\end{proof}

\begin{definition}
Let $A$ be a minimax group.
The \emph{weight} of a subgroup $S$ is
$$
w_A(S)=r(A/S)+\ell(S)+\kappa(A/S).
$$
\end{definition}

The following lemma is straightforward.

\begin{lemma}
Let $A$ be a minimax group. The maps $w_A$ is constant on each commensurability class in $\mathcal{S}(A)$.\qed
\end{lemma}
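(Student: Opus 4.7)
The plan is to reduce the statement to the case of a finite-index inclusion and then verify invariance of each of the three summands separately. If $S,S'$ are commensurable, then $S\cap S'$ has finite index in both $S$ and $S'$, so it suffices to prove that $w_A(S)=w_A(S')$ whenever $S\subset S'$ with $[S':S]<\infty$; applying this twice via $S\cap S'$ then yields the general statement.

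So assume $S\subset S'$ with $S'/S$ finite. For the term $r(A/S)$: the canonical projection $A/S\twoheadrightarrow A/S'$ has finite (torsion) kernel $S'/S$, so tensoring with $\Q$ gives an isomorphism $(A/S)\otimes\Q\cong (A/S')\otimes\Q$, and hence $r(A/S)=r(A/S')$. For the term $\ell(S)=\sum_p\ell_p(S)$: pick a lattice $Z$ of $S$; since $[S':Z]=[S':S][S:Z]<\infty$, the subgroup $Z$ is also a lattice of $S'$. Then $S/Z\subset S'/Z$ is a finite-index inclusion of Artinian groups, so the divisible parts coincide, $\Div(S/Z)=\Div(S'/Z)$ (the largest divisible subgroup is unchanged by a finite extension). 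Since $\ell_p(S)$ equals the number of $\Cp$-factors in $\Div(S/Z)$, we get $\ell_p(S)=\ell_p(S')$ for every prime $p$, and summing over the (finitely many) contributing primes gives $\ell(S)=\ell(S')$. For the term $\kappa(A/S)$: the map $A/S\twoheadrightarrow A/S'$ has finite kernel, which lies in the torsion subgroup $T_{A/S}$, so it induces an isomorphism on the torsion-free quotients $(A/S)/T_{A/S}\cong (A/S')/T_{A/S'}$; consequently $\kappa_p(A/S)=\ell_p((A/S)/T_{A/S})=\ell_p((A/S')/T_{A/S'})=\kappa_p(A/S')$ for every $p$.

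There is no real obstacle here; the only bookkeeping point is to check that a lattice of $S$ is still a lattice of $S'$ after finite-index enlargement and that each of the sums $\ell$ and $\kappa$ is supported on finitely many primes (which follows from $A$ being minimax). Summing the three identities produces $w_A(S)=w_A(S')$.
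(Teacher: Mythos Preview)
Your argument is correct and is precisely the kind of routine verification the paper has in mind (the paper gives no proof, marking the lemma with a bare \qed as ``straightforward''). One small slip: when you pick a lattice $Z$ in $S$, the index $[S:Z]$ is generally \emph{not} finite---a lattice in this paper means a finitely generated subgroup with Artinian (not finite) quotient---so the line $[S':Z]=[S':S][S:Z]<\infty$ is not valid. The conclusion you actually need, that $Z$ is also a lattice of $S'$, is nonetheless true: $S'/Z$ is an extension of the finite group $S'/S$ by the Artinian group $S/Z$, hence Artinian. Everything else goes through as written.
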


\begin{lemma}\label{mapsusc}
Let $A$ be a minimax group. The maps $w_A$ is upper semi-continuous on $\mathcal{S}(A)$.
\end{lemma}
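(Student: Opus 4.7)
The plan is to rewrite $w_A(S)$ as a combination of functions whose semi-continuity is already handled by Lemmas~\ref{rusc0}--\ref{taulsc0}. The essential ingredient will be the additivity of $\ell_p$ along short exact sequences of minimax groups:
$$\ell_p(A) = \ell_p(S) + \ell_p(A/S) \qquad \text{for every } S \le A.$$
To verify this, I would choose a lattice $Z \le A$ (so that $Z \cap S$ is a lattice of $S$ and $(Z+S)/S$ is a lattice of $A/S$), consider the short exact sequence of Artinian groups
$$0 \to S/(S \cap Z) \to A/Z \to A/(S+Z) \to 0,$$
and invoke the additivity, on short exact sequences of Artinian abelian groups, of the $\Cp$-multiplicity of the divisible part; after passing to $p$-primary components this reduces to exactness of the maximal divisible subgroup functor on $p$-primary Artinian groups.

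Applying the same formula to the torsion sequence $0 \to T_B \to B \to B/T_B \to 0$ for any minimax $B$ yields the companion identity $\ell_p(B) = \tau_p(B) + \kappa_p(B)$. Combining the two gives
$$\ell_p(S) + \kappa_p(A/S) = \ell_p(S) + \ell_p(A/S) - \tau_p(A/S) = \ell_p(A) - \tau_p(A/S).$$
Since $h(A) < \infty$, only finitely many primes contribute to any of these sums, so summing over $p$ produces
$$w_A(S) = r(A/S) + \ell(A) - \sum_{p} \tau_p(A/S),$$
with $\ell(A)$ depending only on $A$.

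To conclude, Lemma~\ref{rusc0} says that $S \mapsto r(A/S)$ is upper semi-continuous, and Lemma~\ref{taulsc0} says that each $S \mapsto \tau_p(A/S)$ is lower semi-continuous, hence each $S \mapsto -\tau_p(A/S)$ is upper semi-continuous. A finite sum of upper semi-continuous integer-valued functions is upper semi-continuous, so $w_A$ is upper semi-continuous as required. The only substantive step is the additivity $\ell_p(A) = \ell_p(S) + \ell_p(A/S)$; once this identity is in hand, the rest of the argument is merely assembling the semi-continuity lemmas already established in this section.
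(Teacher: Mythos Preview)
Your proof is correct and follows essentially the same route as the paper: rewrite $w_A(S)=r(A/S)+\ell(A)-\tau(A/S)$ and then invoke Lemmas~\ref{rusc0} and~\ref{taulsc0}. The paper simply asserts this identity as an observation, while you supply a derivation via the additivity $\ell_p(A)=\ell_p(S)+\ell_p(A/S)$; note that this additivity is in fact already available from Lemma~\ref{suradd} (the equality case, with $S$ torsion) once you pass to the Artinian quotient $A/Z$, so you could shorten your argument by citing it directly rather than appealing to exactness of the divisible-part functor.
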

\begin{proof}
Observe that
$$w_A(S)=r(A/S)+\ell(A)-\tau(A/S).$$
so $w_A$ is upper semi-continuous as consequences of Lemmas \ref{gammausc0} and \ref{taulsc0}.
\end{proof}

\begin{lemma}\label{closeS}
Let $A$ be a minimax group and $S$ a torsion subgroup. If $S'\le A$ is close enough to $S$ and $\Div(S')\le S$, then either $S'=S$ or $w_A(S')<w_A(S)$.
\end{lemma}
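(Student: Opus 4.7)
My plan is to combine the semi-continuity of the constituents of $w_A$ with an isolated-point argument in the Artinian quotient $T_A/\Div(S)$.

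First I would note that, by the identity $w_A(\cdot) = r(A/\cdot) + \ell(A) - \tau(A/\cdot)$ established in the proof of Lemma \ref{mapsusc}, the upper semi-continuity of $S' \mapsto r(A/S')$ (Lemma \ref{rusc0}), and the lower semi-continuity of $S' \mapsto \tau(A/S')$ (Lemma \ref{taulsc0}), on a suitable neighborhood $U_0$ of $S$ one has simultaneously $r(A/S') \le r(A/S)$ and $\tau(A/S') \ge \tau(A/S)$, so that $w_A(S') \le w_A(S)$ with equality if and only if both intermediate inequalities are equalities. If $w_A(S') < w_A(S)$ we are done; otherwise, since $r(A/S) = r(A)$ (as $S$ is torsion), it follows that $r(S') = 0$ and $S'$ is torsion, and then Lemma \ref{suradd} applied to the torsion subgroups $S, S'$ yields $\tau(S') = \tau(S)$.

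Next, I would identify $\Div(S')$. Since $S$ is a torsion subgroup of the Artinian group $T_A$, it is itself Artinian, so I can write $S = D \oplus F$ with $D = \Div(S)$ and $F$ finite. Because $\Div(S')$ is divisible and contained in $S$, its projection to the finite factor $F$ must vanish, forcing $\Div(S') \le D$. Decomposing $D = \bigoplus_p \Cp^{\tau_p(S)}$, the inequalities $\tau_p(S') \le \tau_p(S)$ for every $p$ (a consequence of $\Div(S') \le D$), combined with $\tau(S') = \tau(S)$, force $\tau_p(S') = \tau_p(S)$ for every $p$; since a divisible subgroup of $\Cp^n$ of rank $n$ is necessarily the whole $\Cp^n$, this upgrades $\Div(S') \le D$ to $\Div(S') = D$.

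The proof is then completed by an isolation argument. The group $T_A$ is Artinian, hence so are $T_A/D$ and its quotient $T_A/S$; moreover $S/D$ is finite, in particular finitely generated. The isolated-point characterization recalled just before Corollary \ref{cor:existisolated} therefore shows that $S/D$ is isolated in $\mathcal{S}(T_A/D)$. Identifying $\mathcal{S}(T_A/D)$ with the closed subspace $\{S'' \in \SA : D \le S'' \le T_A\}$ of $\SA$, one obtains a neighborhood $U_1$ of $S$ with $U_1 \cap \{S'' : D \le S'' \le T_A\} = \{S\}$. Setting $U := U_0 \cap U_1$, any $S' \in U$ with $\Div(S') \le S$ that falls in the equality case satisfies $S' \le T_A$ and $D = \Div(S') \le S'$, so $S' \in U_1 \cap \mathcal{S}(T_A/D) = \{S\}$, giving $S' = S$. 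The main obstacle, in my view, is recognizing that the equality case $w_A(S') = w_A(S)$ is governed by an isolated-point phenomenon for the finite group $S/D$ inside the Artinian space $\mathcal{S}(T_A/D)$, rather than by a direct combinatorial control in $\SA$; once this reduction is made, the rest is essentially bookkeeping.
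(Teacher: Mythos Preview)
Your argument is correct and reaches the same conclusion, but the route differs from the paper's in two places. First, the paper begins by modding out a finite subgroup of $S$ so as to assume $S$ is divisible; this lets it use the per-prime lower semi-continuity of $\tau_p(A/\cdot)$ (Lemma~\ref{taulsc0}) to obtain $\ell_p(S')=\ell_p(S)$ for every $p$ directly, whence $\Div(S')=S$. You avoid the reduction and instead work with the total $\tau$, recovering the per-prime equalities $\tau_p(S')=\tau_p(S)$ from the containment $\Div(S')\le D$ together with $\tau(S')=\tau(S)$. Second, in the endgame the paper takes a direct complement $M$ of $S$ in $S'$ and kills $M$ by forbidding elements of prime order outside $S$; you instead invoke the isolated-point criterion (finite $S/D$, Artinian $T_A/S$) to conclude that $S/D$ is isolated in $\mathcal{S}(T_A/D)$, hence $S'=S$. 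The paper's argument is slightly more elementary and self-contained; yours is more structural, trading the explicit complement computation for the black-box characterization of isolated points, and it has the mild advantage of not needing the preliminary reduction to $S$ divisible.
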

\proof
First we can mod out by a finite subgroup of $S$ and suppose that $S$ is divisible. We can write $w_A(S')=\ell(A)+r(A/S')-\sum_p\tau_p(A/S')$ as a sum of upper semi-continuous functions. So if $S'$ be close to $S$ with $w_A(S')=w_A(S)$, we have $r(A/S')=r(A/S)$ by Lemma \ref{rusc0} (hence $S'$ is torsion too) and $\tau_p(A/S')=\tau_p(A/S)$ for all $p$ by Lemma \ref{taulsc0}, i.e. $\ell_p(S')=\ell_p(S)$ for all $p$. As by assumption $\Div(S')\le S$, it follows that $\Div(S')=S$. So $S$ has a direct complement $M$ in $S'$. Let $F$ be the set of elements of prime order in $A\setminus S$. If $S'$ is close enough to $S$, we have $S'\cap F=\emptyset$, hence $M$ cannot contain any element of prime order, so $M=\{0\}$, and $S'=S$.
\endproof

\begin{proposition}\label{prop:neigh}
Let $A$ be a minimax group, and $S\in\SA$. If $S'$ is close enough to $S$, then either
\begin{itemize}\item[(1)] $w_A(S')<w_A(S)$, \item[(2)] or $w_A(S')=w_A(S)$ and $S'$ is parallel and non-commensurable to $S$,\item[(3)] or $S'=S$.
\end{itemize}\end{proposition}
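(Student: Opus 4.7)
The plan is to combine the upper semi-continuity of $w_A$ (Lemma~\ref{mapsusc}) with the torsion-case Lemma~\ref{closeS}, via a reduction modulo a common lattice. The chief technical point will be to upgrade the single equality $w_A(S')=w_A(S)$ into the stronger separate equalities $r(A/S')=r(A/S)$ and $\tau_p(A/S')=\tau_p(A/S)$ for every prime $p$; together these encode the parallelism required in case $(2)$ and supply the hypotheses needed to invoke Lemma~\ref{closeS}.

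First I would apply Lemma~\ref{mapsusc} to obtain $w_A(S')\le w_A(S)$ in some neighbourhood of $S$; a strict inequality gives case $(1)$, so henceforth I assume equality. Writing $w_A(T)=r(A/T)+\ell(A)-\tau(A/T)$ and using the upper semi-continuity of $T\mapsto r(A/T)$ (Lemma~\ref{rusc0}) and the lower semi-continuity of $T\mapsto \tau_p(A/T)$ for each of the finitely many primes $p$ with $\tau_p(A)>0$ (Lemma~\ref{taulsc0}), the non-negative quantities $r(A/S)-r(A/S')$ and $\tau_p(A/S')-\tau_p(A/S)$ must sum to zero; hence each vanishes, giving $r(A/S')=r(A/S)$ and $\tau_p(A/S')=\tau_p(A/S)$ for all $p$.

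Next, I would fix a lattice $Z$ in $S$. Since $Z$ is finitely generated, the condition $Z\le S'$ is clopen, and so holds once $S'$ is close enough to $S$. The equality $r(S')=r(S)=r(Z)$ then makes $S'/Z$ torsion, and a torsion minimax abelian group is Artinian, so $Z$ is a lattice in $S'$ as well. Inside $A/Z$ the subgroups $S/Z$ and $S'/Z$ are torsion, so Lemma~\ref{suradd} yields $\tau_p(S/Z)=\tau_p(A/Z)-\tau_p(A/S)$ and likewise for $S'/Z$; the equalities $\tau_p(A/S')=\tau_p(A/S)$ established above then give $\ell_p(S'/Z)=\ell_p(S/Z)$. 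Since $Z$ is finitely generated, this lifts to $\ell_p(S')=\ell_p(S)$ for every prime, so $S$ and $S'$ are parallel.

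It remains to rule out commensurability when $S'\ne S$. The identity $\ell_p(T)=\ell_p(T/Z)$ for finitely generated $Z\le T$ (proved by adjoining $Z$ to a lattice in $T$) yields $w_A(T)=w_{A/Z}(T/Z)$ whenever $Z\le T$, so the preserved weight descends to $A/Z$. If $S'$ were commensurable with $S$, then $S'/Z$ would be commensurable with the Artinian group $S/Z$, forcing $\Div(S'/Z)=\Div(S/Z)\le S/Z$; Lemma~\ref{closeS}, applied in $A/Z$ to the torsion subgroup $S/Z$, would then give $S'/Z=S/Z$, i.e.\ $S'=S$. Thus either $S'=S$ (case $(3)$) or $S'$ is parallel and non-commensurable to $S$ (case $(2)$), completing the trichotomy.
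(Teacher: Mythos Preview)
Your proof is correct and follows essentially the same route as the paper's: pass modulo a finitely generated $Z\le S$ so that $S$ becomes torsion, decompose $w_A$ into its semi-continuous pieces to upgrade $w_A(S')=w_A(S)$ to $r(A/S')=r(A/S)$ and $\tau_p(A/S')=\tau_p(A/S)$ for all $p$ (hence parallelism), and then invoke Lemma~\ref{closeS} to rule out the commensurable case. One small slip worth correcting: the finite set of primes you should sum over is $\{p:\ell_p(A)>0\}$, not $\{p:\tau_p(A)>0\}$ (for instance $A=\Z[1/p]$, $S=\Z$ has $\tau_p(A)=0$ but $\tau_p(A/S)=1$); this does not affect the argument, since Lemma~\ref{taulsc0} gives lower semi-continuity of $\tau_p(A/\,\cdot\,)$ for every prime $p$.
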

\proof
Let $S\le A$ and let $Z$ be a finitely generated subgroup of $S$. If $S'$ is close enough to $S$, then $Z\le S'$, and as $w_{A/Z}(S'/Z)=w_A(S')$ whenever $S'$ contains $Z$, we can suppose that $S$ is divisible. If $S'$ is close to $S$ and $w_A(S')=w_A(S)$ then $r(A/S')=r(A/S)$, so that $S'$ is torsion, and $\tau_p(A/S')=\tau_p(A/S)$ for all $p$, hence $\ell_p(S)=\ell_p(S')$ for all $p$, i.e. $S$ and $S'$ are parallel (argue as in the beginning of the proof of Lemma \ref{closeS}).
If moreover $S'$ is commensurable to $S$, then $\Div(S')=\Div(S)$. By Lemma \ref{closeS}, we get that if $S'$ is close enough to $S$, then $S'=S$.
\endproof

As $w_A$ is constant on commensurability classes, we get

\begin{corollary}
Let $A$ be a minimax group. Every commensurability class in $\SA$ is discrete.\qed
\end{corollary}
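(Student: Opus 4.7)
The plan is to combine Proposition \ref{prop:neigh} with the fact (stated in the unnumbered lemma just before Lemma \ref{mapsusc}) that $w_A$ is constant on each commensurability class. Given $S\in\SA$ with commensurability class $\mathcal{C}$, I want to show that $S$ is isolated in $\mathcal{C}$ with the induced topology; since $S$ is arbitrary, this shows $\mathcal{C}$ is discrete.

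First, I would apply Proposition \ref{prop:neigh} to obtain a neighbourhood $V$ of $S$ in $\SA$ such that every $S'\in V$ falls into one of the three alternatives (1), (2), (3). Next, I would take any $S'\in V\cap\mathcal{C}$ and rule out cases (1) and (2) in turn. Case (1) is excluded because $S'$ is commensurable to $S$, so $w_A(S')=w_A(S)$ by constancy of $w_A$ on commensurability classes. Case (2) is excluded because it requires $S'$ to be non-commensurable to $S$, whereas $S'\in\mathcal{C}$. Hence only case (3) remains, giving $S'=S$. Therefore $V\cap\mathcal{C}=\{S\}$, which exactly means $S$ is isolated in $\mathcal{C}$.

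There is no real obstacle here: the work has been done in Proposition \ref{prop:neigh}, and this corollary is a purely formal consequence obtained by observing that the two nontrivial alternatives of that proposition are both incompatible with being commensurable (and distinct from $S$). The only thing to be careful about is to apply the constancy of $w_A$ on commensurability classes before attempting any deeper argument, so that alternative (1) is immediately discarded.
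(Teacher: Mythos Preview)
Your proof is correct and matches the paper's own reasoning exactly: the corollary is presented with a \qed, preceded by the remark ``As $w_A$ is constant on commensurability classes, we get'', which is precisely the observation you use to eliminate alternative~(1), while alternative~(2) is excluded by definition.
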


\begin{corollary}\label{cor:sscidle}
Let $A$ be a minimax group and $S\in\SA$. Then the function $w_A$ is strictly upper semi-continuous at $S$ if and only if $S$ is idle.
\end{corollary}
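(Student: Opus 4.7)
The plan is to read off the corollary from Proposition \ref{prop:neigh}, which says that for $S'$ sufficiently close to $S$, exactly one of three alternatives holds: (1) $w_A(S')<w_A(S)$; (2) $w_A(S')=w_A(S)$ with $S'$ parallel and non-commensurable to $S$; or (3) $S'=S$. Strict upper semi-continuity of $w_A$ at $S$ is therefore equivalent to the absence of alternative (2) on some neighborhood of $S$. The easy direction is immediate: if $S$ is idle, then no subgroup of $A$ is parallel and non-commensurable to $S$, so alternative (2) is globally forbidden and strict upper semi-continuity follows at once.

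For the converse, assume $S$ is not idle; by Lemma \ref{Lem parallnotstrong}, $S$ is strongly $p$-critical for some critical prime $p$, that is, $\ell_p(S)\ge 1$ and $\tau_p(A/S)\ge 1$. I will build a sequence $S_k\to S$ in $\SA$ with each $S_k$ parallel but non-commensurable to $S$, which forces alternative (2) in every neighborhood of $S$. Pick a surjection $\pi\colon S\twoheadrightarrow\Cp$ with kernel $S_0$ and pass to $A/S_0$. Since $S/S_0\cong\Cp$ is torsion, Lemma \ref{suradd} yields $\tau_p(A/S_0)=\tau_p(S/S_0)+\tau_p(A/S)=1+\tau_p(A/S)\ge 2$, so the divisible subgroup of $A/S_0$ contains a copy $V\cong\Cp\oplus\Cp$ with $S/S_0$ as its first factor and a transverse $\Cp$ summand $E$. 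Choose compatible generators $(d_n)$ of $S/S_0$ and $(e_n)$ of $E$ (so $pd_{n+1}=d_n$, $pe_{n+1}=e_n$, $d_n,e_n$ of order $p^n$), let $F_k\le V$ be the $\Cp$ subgroup generated by $\{d_n+e_{n-k}:n\ge k\}$, and let $S_k$ be the preimage of $F_k$ in $A$. A direct Chabauty computation inside $V$ shows $F_k\ne S/S_0$ and $F_k\to S/S_0$, and since all $S_k$ contain $S_0$, this lifts to $S_k\to S$ in $\SA$. Parallelism follows because $S_0$ is a common lattice (both $S/S_0$ and $S_k/S_0=F_k$ are isomorphic to $\Cp$, hence Artinian) and $\ell_q(S_k)=\ell_q(S)$ for every prime $q$; non-commensurability follows because $F_k\cap(S/S_0)$, an intersection of two distinct $\Cp$ subgroups of the divisible group $V$, is finite.

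The step I expect to require the most care is verifying $w_A(S_k)=w_A(S)$, which is necessary to place $S_k$ in alternative (2) rather than (1). I would handle this by splitting $A/S_0=V'\oplus W$, with $V'$ the full divisible subgroup and $W$ a complement, and observing that
\[
A/S_k \;=\; V'/F_k\oplus W \;\cong\; V'/(S/S_0)\oplus W \;=\; A/S
\]
as abstract groups, since any two $\Cp$ summands of the finite direct sum of quasi-cyclic groups $V'$ have isomorphic quotients. This isomorphism gives $r(A/S_k)=r(A/S)$ and $\kappa(A/S_k)=\kappa(A/S)$, and parallelism gives $\ell(S_k)=\ell(S)$; combining, $w_A(S_k)=w_A(S)$. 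Hence $S_k\ne S$ is a sequence converging to $S$ on which $w_A$ is constant equal to $w_A(S)$, so $w_A$ fails to be strictly upper semi-continuous at $S$.
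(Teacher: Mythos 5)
Your proposal is correct, and the two directions deserve separate comment. For the ``if'' direction you argue exactly as the paper does: the paper's entire proof of Corollary \ref{cor:sscidle} is the one-line observation that alternative (2) of the trichotomy in Proposition \ref{prop:neigh} can only occur when $S$ is non-idle. For the converse, the paper gives no explicit argument at this point; the fact it needs (that a non-idle, i.e.\ strongly $p$-critical, subgroup is a limit of subgroups parallel and non-commensurable to it, on which $w_A$ takes the same value) only surfaces later, in Lemma \ref{LemCritCantor}, where it is deduced from Lemma \ref{LemCp2} by identifying the set of quasi-cyclic subgroups of $(\Cp)^2$ with $\mathbf{P}^1(\Q_p)$ and hence with a Cantor set. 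Your construction replaces that identification by an explicit sequence $F_k\to S/S_0$ inside $\Cp\oplus\Cp$; this is more elementary and self-contained, and you also carry out the verification that $w_A(S_k)=w_A(S)$ (via the isomorphism $A/S_k\cong A/S$), a point the paper never spells out even in Lemma \ref{LemCritCantor} --- alternatively it follows from the fact that $w_A$ depends only on $r(A/S)$ and the numbers $\tau_p(A/S)$, which are constant on parallelism classes. Two small points of hygiene: $S_0=\ker(S\to\Cp)$ need not be finitely generated, so it is not literally a ``common lattice'' in the paper's sense; you should pass to a lattice $Z$ of $S_0$ and note that $S/Z$ and $S_k/Z$ are both Artinian. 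And your appeal to Lemma \ref{suradd} is the equality case applied inside $A/S_0$ to the torsion subgroup $S/S_0$, which is exactly how the paper computes $\tau_p(A/S')=\tau_p(A/S)+1$ in the proof of Lemma \ref{Lem parallnotstrong}. Neither point affects the validity of the argument.
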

Indeed, the case (2) can occur only if $S$ is non-idle.

If $A$ has no critical primes, every subgroup is idle and we thus get

\begin{corollary}\label{noncritssci}
If the minimax group $A$ has no critical primes, then the map $w_A$ is strictly upper semi-continuous on $\SA$.\qed
\end{corollary}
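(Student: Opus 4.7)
The plan is to deduce this corollary directly from the two preceding results, Lemma \ref{Lem parallnotstrong} and Corollary \ref{cor:sscidle}.

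First I would observe that if $A$ has no critical primes, then the set of critical primes is empty, so the condition of Lemma \ref{Lem parallnotstrong} that $S$ be not strongly $p$-critical for any critical prime $p$ is vacuously satisfied for every $S \in \SA$. Hence every subgroup of $A$ is idle.

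Next I would invoke Corollary \ref{cor:sscidle}, which states that $w_A$ is strictly upper semi-continuous at $S$ if and only if $S$ is idle. Since every $S \in \SA$ is idle by the previous step, $w_A$ is strictly upper semi-continuous at every point of $\SA$, which is precisely the statement of the corollary.

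There is essentially no obstacle here; the statement is a formal consequence of the already-established dichotomy in Proposition \ref{prop:neigh} (which distinguishes the cases in which the weight drops strictly versus the cases in which parallel non-commensurable subgroups accumulate at $S$), together with the characterization of idleness in terms of critical primes.
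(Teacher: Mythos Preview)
Your proposal is correct and follows exactly the paper's reasoning: the paper simply notes that when there are no critical primes every subgroup is idle (vacuously, via Lemma~\ref{Lem parallnotstrong}) and then invokes Corollary~\ref{cor:sscidle}, which is precisely what you do.
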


\subsection{Commensurable convergence}

\begin{definition}
Let $H$ and $S$ be two subgroups of $A$. We say that there is
commensurable convergence from $H$ to $S$ if $S$ belongs to the
topological closure of the commensurability class of $H$.
\end{definition}

\begin{lemma} \label{LemComConv}
Let $A$ be a minimax group and $S \le T_A$ a torsion subgroup. Let $H$ be another
subgroup of $A$. Then the following are equivalent.
\begin{itemize}
\item[(i)] There is commensurable convergence from $H$ to $S$;
\item[(ii)] $\Div(H)$ is contained in $S$.
\end{itemize}
\end{lemma}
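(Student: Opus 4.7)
The implication (i)$\Rightarrow$(ii) is the easier one. Suppose $(H_n)$ is a net of subgroups commensurable to $H$ with $H_n\to S$ in $\SA$. Since a divisible abelian group has no proper finite-index subgroup, commensurability forces $\Div(H)\subseteq H\cap H_n$ for every $n$, so $\Div(H)\subseteq H_n$. By the Chabauty semantics of convergence, any element belonging to every $H_n$ must belong to $S$; hence $\Div(H)\subseteq S$.

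For the converse, the plan is first to reduce to the case where $H$ is reduced by passing to $\bar A:=A/\Div(H)$. Since $\Div(H)\subseteq S$, the map $T\mapsto T/\Div(H)$ is a homeomorphism from $\{T\in\SA:T\supseteq\Div(H)\}$ onto $\mathcal{S}(\bar A)$, and it preserves commensurability (intersection indices are unchanged because $\Div(H)$ lies inside every subgroup in sight) and Chabauty convergence (since $\Div(H)\subseteq S$, the preimage of $\bar S$ under $\pi\colon A\to\bar A$ is exactly $S$). Moreover $\bar H:=H/\Div(H)$ is reduced (any larger divisible subgroup would contradict the maximality of $\Div(H)$) and $\bar S:=S/\Div(H)$ remains torsion, so one may assume $H$ is reduced minimax.

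Convergence is then established by an explicit local construction. A basic neighborhood of $S$ is specified by a finite set $F\subset A$; I seek $H'\in\SA$ commensurable to $H$ with $H'\cap F=F_0$, where $F_0:=S\cap F$ and $F_1:=F\setminus S$. Let $H_1:=H+\langle F_0\rangle$: since $\langle F_0\rangle\subseteq S$ is finitely generated torsion, hence finite, $H_1$ is commensurable to $H$, contains $F_0$, and remains reduced minimax. Crucially, the quotient $H_1/\langle F_0\rangle$ is still reduced: any nontrivial divisible subgroup of it is a sum of quasi-cyclic groups (since $\Q$ is not minimax), whose preimage in $H_1$ would be torsion (modulo the finite $\langle F_0\rangle$) and reduced minimax, hence finite, contradicting the infinitude of its divisible quotient. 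Using that reduced minimax abelian groups are residually finite, I choose a finite-index subgroup $\bar K\le H_1/\langle F_0\rangle$ missing the finitely many nonzero images of $F_1\cap H_1$. Its preimage $H'\le H_1$ is commensurable to $H$, contains $F_0$ (since $\langle F_0\rangle\subseteq H'$), and meets $F_1$ trivially (elements of $F_1\setminus H_1$ being excluded automatically); so $H'\cap F=F_0=S\cap F$, as required.

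The delicate point, and the main obstacle, is ensuring that the quotient $H_1/\langle F_0\rangle$ remains reduced so that residual finiteness applies; this relies on the minimax hypothesis through the key fact that a reduced torsion minimax abelian group is finite.
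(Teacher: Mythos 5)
Your argument is correct. The forward implication is the same as the paper's, run directly rather than contrapositively: both hinge on the fact that a divisible group has no proper finite-index subgroup, so $\Div(H)$ lies in every subgroup commensurable to $H$ and hence in every Chabauty limit of such subgroups. For the converse, the two proofs use the same two ingredients --- absorb the torsion of $S$ into $H$ by finite steps, and kill the reduced part of $H$ by finite-index subgroups --- but organize them differently. The paper splits $H=\Div(H)\oplus L$, chooses a torsion-free finite-index subgroup $L'\le L$ and finite-index subgroups $V_n\le L'$ with trivial intersection, and exhibits the explicit commensurable sequence $H_n=F_n\oplus V_n\to S$, where $F_n$ exhausts $S$ over $\Div(H)$; the splitting makes every verification immediate. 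You instead pass to the quotient $A/\Div(H)$ and argue neighbourhood by neighbourhood, which obliges you to check that $H_1/\langle F_0\rangle$ remains reduced and to invoke residual finiteness of reduced minimax abelian groups explicitly; the paper uses that same fact implicitly when it produces the chain $(V_n)$ (e.g. $V_n=n!\,L'$, using that $L'/nL'$ is finite and $\bigcap_n nL'=0$ for a reduced torsion-free minimax group). Your route costs a little extra bookkeeping --- the reducedness of $H_1/\langle F_0\rangle$, which the direct-sum splitting sidesteps, and which you justify correctly via ``reduced torsion minimax implies finite'' --- but avoids choosing a complement of $\Div(H)$; both arguments are sound, and your quantifier structure (one commensurable subgroup per basic neighbourhood of $S$) is exactly what membership of $S$ in the closure of the commensurability class of $H$ requires.
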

\proof Suppose (ii). As a divisible subgroup, $\Div(H)$ has a direct complement $L$ in $H$, and $L$ contains a torsion-free subgroup of finite index $L'$.
Let $(F_n)$ be a non-decreasing sequence of subgroups of $S$
containing $\Div(H)$ as a subgroup of finite index, with union all
of $S$. Let $(V_n)$ be a sequence of subgroups of finite index of $L'$,
with trivial intersection. Set
$$H_n=F_n\oplus V_n\subset S\oplus L'.$$ Then
$H_n$ has finite index in $F_n\oplus L$, which contains $\Div(H)\oplus L=H$ with finite index. So $H_n$ is commensurable to $H$; clearly $(H_n)$
tends to $S$.

Conversely, suppose that $T_H \cap S$ has infinite index in $T_H$.
Then $H$ contains a quasi-cyclic subgroup $P\simeq\Cp$ such that $P\cap S$ is
finite. Then every subgroup of $A$ commensurable to $H$ contains
$P$. Therefore this remains true for every group in the closure of
the commensurability class of $H$, which therefore cannot contain
$S$.
\endproof

As a consequence, we get
\begin{proposition} \label{PropComConv}
Let $A$ be a minimax group and $S$ a subgroup of $A$. Let $H$ be another
subgroup of $A$. Then the following are equivalent.
\begin{itemize}
\item[(i)] There is commensurable convergence from $H$ to $S$;
\item[(ii)] $H$ contains some lattice $Z$ of $S$ and, in $A/Z$, $T_{H/Z}$ is virtually contained in $S/Z$.
\end{itemize}
\end{proposition}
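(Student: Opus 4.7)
The plan is to reduce the general case to the torsion case already handled by Lemma~\ref{LemComConv}. The key intermediate step will be to produce a lattice $Z$ of $S$ that happens to lie inside $H$; once this is done, passing to $A/Z$ turns $S/Z$ into an Artinian (hence torsion) subgroup, and Lemma~\ref{LemComConv} applies to $H/Z$ and $S/Z$ in $A/Z$.

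For the direction $(i)\Rightarrow(ii)$, I will first pick \emph{any} lattice $Z'$ of $S$ (which exists since every subgroup of the minimax group $A$ is minimax). Let $(H_\lambda)$ be a net of subgroups commensurable to $H$ converging to $S$. Because $Z'$ is finitely generated and $Z'\le S$, eventually $Z'\le H_\lambda$; fix such a $\lambda$. Setting $m=[H_\lambda:H\cap H_\lambda]<\infty$ gives $mH_\lambda\le H$, and in particular $Z:=mZ'\le H$. Since $Z'/mZ'$ is finite, $Z$ is still a lattice of $S$. Now $Z$ is contained in $H$ and, since $Z\le S$, eventually $Z\le H_\lambda$, so passing to a subnet I obtain a net $(H_\lambda/Z)$ of subgroups commensurable to $H/Z$ in $A/Z$ that converges to $S/Z$. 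As $S/Z$ is torsion, Lemma~\ref{LemComConv} gives $\Div(H/Z)\le S/Z$. Since $H/Z$ is minimax, $\Div(H/Z)$ has finite index in $T_{H/Z}$, so $T_{H/Z}\cap (S/Z)$ has finite index in $T_{H/Z}$, which is precisely the virtual containment claimed in (ii).

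For the direction $(ii)\Rightarrow(i)$, given a lattice $Z$ of $S$ with $Z\le H$ and $T_{H/Z}$ virtually contained in $S/Z$, I will first upgrade ``virtual containment of $T_{H/Z}$'' to ``containment of $\Div(H/Z)$'': the finite-index subgroup $\Div(H/Z)\cap(S/Z)$ of the divisible group $\Div(H/Z)$ must equal $\Div(H/Z)$ itself (a divisible group has no proper subgroup of finite index), giving $\Div(H/Z)\le S/Z$. Since $S/Z$ is torsion in $A/Z$, Lemma~\ref{LemComConv} produces a net $(\overline{H_\lambda})$ of subgroups of $A/Z$ commensurable to $H/Z$ and converging to $S/Z$. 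Pulling back along the quotient map $A\to A/Z$ (i.e.\ taking the preimages $H_\lambda$ containing $Z$), commensurability is preserved because $Z$ lies in every $H_\lambda$ and in $H$, and convergence $H_\lambda\to S$ in $A$ follows directly from convergence in $A/Z$ combined with $Z\le S\cap H_\lambda$.

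The main obstacle is the first paragraph: turning an arbitrary lattice $Z'$ of $S$ into one contained in $H$. The trick is to exploit commensurability of some nearby $H_\lambda$ to $H$ and use the finite-index bound $m$ to replace $Z'$ by $mZ'$; once this construction is in place, the rest is bookkeeping between $A$ and $A/Z$ and a direct appeal to the torsion case.
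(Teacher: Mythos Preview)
Your proof is correct and follows essentially the same approach as the paper: reduce to the torsion case of Lemma~\ref{LemComConv} by finding a lattice of $S$ inside $H$, then pass to $A/Z$. The only cosmetic difference is in how you produce $Z$: the paper simply takes $Z=H\cap Z_0$ (which has finite index in $Z_0$ since $Z_0\le H_n$ and $H\cap H_n$ has finite index in $H_n$), whereas you use $Z=mZ'$; both constructions work and your write-up is in fact more explicit than the paper's in spelling out the pull-back for $(ii)\Rightarrow(i)$ and the passage from $\Div(H/Z)\le S/Z$ to virtual containment of $T_{H/Z}$.
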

\begin{proof}
The implication (ii)$\Rightarrow$(i) is a direct corollary of Lemma \ref{LemComConv}. Suppose (i). Let $Z_0$ be a lattice of $S$ and let $(H_n)$ be a sequence of subgroups of $A$, commensurable to $H$, converging to $S$. Then $H_n\cap Z_0\to Z_0$. As $Z_0$ is finitely generated, eventually $H_n$ contains $Z_0$. So $H$ contains a finite index subgroup $Z$ of $Z_0$. So working in $A/Z$ and applying Lemma \ref{LemComConv}, we get (ii).
\end{proof}

As an application, we have the two extreme cases

\begin{proposition}\label{extreme_com_class}
Let $A$ be a minimax group.\begin{itemize}\item If $Z$ is a lattice of $A$, then there is commensurable convergence from $Z$ to any subgroup of $A$.\item If $S=T_A$, then $S$ belongs to the closure of any commensurability class in $\SA$.\end{itemize}
\end{proposition}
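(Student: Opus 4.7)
Both statements will be obtained as direct applications of the characterization of commensurable convergence given in Proposition \ref{PropComConv}: it suffices, in each case, to exhibit a lattice $Z'$ of the target subgroup $S$ contained in $H$ and to verify that $T_{H/Z'}$ is virtually contained in $S/Z'$ in $A/Z'$.

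For the first item, let $Z$ be a lattice of $A$ and let $S\le A$ be arbitrary. The natural candidate for a lattice of $S$ contained in $Z$ is $Z':=Z\cap S$. I would first check that $Z'$ is indeed a lattice of $S$: the inclusion $S\hookrightarrow A$ induces an embedding $S/Z'\hookrightarrow A/Z$, and since $A/Z$ is Artinian by assumption on $Z$, so is $S/Z'$; moreover $Z'\le Z$ is finitely generated. Next, since $Z$ is finitely generated, the quotient $Z/Z'$ is a finitely generated abelian group, so $T_{Z/Z'}$ is \emph{finite}, and consequently virtually contained in any subgroup of $A/Z'$, in particular in $S/Z'$. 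Proposition \ref{PropComConv} then gives commensurable convergence from $Z$ to $S$.

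For the second item, let $H\le A$ be arbitrary and $S=T_A$. Here the key observation to make first is that $T_A$ is itself Artinian. To see this, fix any lattice $L$ of $A$; then $T_L$ is finite (as $L$ is finitely generated abelian), while $T_A/T_L$ embeds into the Artinian quotient $A/L$ via the composition $T_A\hookrightarrow A\twoheadrightarrow A/L$ (whose kernel is exactly $T_A\cap L=T_L$). Thus $T_A$ is an extension of the Artinian group $T_A/T_L$ by the finite group $T_L$, hence Artinian. Consequently the trivial subgroup $Z':=0$ is a lattice of $T_A$, it is trivially contained in $H$, and $T_{H/Z'}=T_H\subseteq T_A=S/Z'$, so the virtual containment is satisfied with index one. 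A second appeal to Proposition \ref{PropComConv} yields commensurable convergence from $H$ to $T_A$.

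I do not anticipate any real obstacle: everything reduces to the two elementary facts that intersecting with a lattice produces a lattice in a subgroup (via the embedding of quotients into $A/Z$) and that the torsion part $T_A$ of a minimax group is Artinian. Once Proposition \ref{PropComConv} is in place, both assertions are essentially immediate.
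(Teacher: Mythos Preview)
Your proof is correct and follows exactly the approach implicit in the paper, which presents the proposition as a direct application of Proposition~\ref{PropComConv} without further detail. You have simply supplied the straightforward verifications (that $Z\cap S$ is a lattice of $S$ with $T_{Z/(Z\cap S)}$ finite, and that $T_A$ is Artinian so that $Z'=0$ works for the second item), which is precisely what the paper leaves to the reader.
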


So, the lattices of a minimax group form a dense commensurability class. As it consists of isolated points it is the unique dense commensurability class. At the opposite, the torsion subgroup is in the unique closed (and finite) commensurability class of $\SA$.

Another consequence of Proposition \ref{PropComConv} is the following, which quite surprisingly is not an obvious consequence of the definition.

\begin{corollary}\label{cctrans}
If there is commensurable convergence $H\to L$ and $L\to S$, then there is commensurable convergence $H\to S$.\qed
\end{corollary}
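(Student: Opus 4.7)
The plan is to reduce Corollary \ref{cctrans} to the following \emph{invariance} statement: if $H \to L$ and $L' \sim L$ (commensurable), then $H \to L'$. Granting this, since $L \to S$ means $S$ lies in the closure of the commensurability class $[L]$ of $L$, I pick a net $(L_j)_j$ in $[L]$ with $L_j \to S$; applying the invariance statement to each $L_j \sim L$ yields $L_j \in \overline{[H]}$, and since $\overline{[H]}$ is closed, it contains $S = \lim L_j$, i.e.\ $H \to S$.

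To prove the invariance statement, I apply Proposition \ref{PropComConv} to $H \to L$ to extract a lattice $Z_L$ of $L$ with $Z_L \le H$ such that $T_{H/Z_L}$ is virtually contained in $L/Z_L$ inside $A/Z_L$. I then set $Z' := Z_L \cap L'$. Since $[L : L \cap L'] < \infty$, one gets $[Z_L : Z'] \le [L : L\cap L'] < \infty$, so $Z'$ is finitely generated; moreover the natural isomorphism $L'/Z' \simeq (L'+Z_L)/Z_L$ realises $L'/Z'$ as a subgroup of $(L+L')/Z_L$, which is a finite extension of the Artinian group $L/Z_L$ and hence Artinian. Thus $Z'$ is a lattice of $L'$ sitting inside $H$.

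The delicate step is verifying that $T_{H/Z'}$ is virtually contained in $L'/Z'$, so that Proposition \ref{PropComConv} applies in the reverse direction. I exploit the canonical projection $\phi : A/Z' \twoheadrightarrow A/Z_L$, whose kernel $Z_L/Z'$ is finite; direct checks give $T_{H/Z'} = \phi^{-1}(T_{H/Z_L}) \cap H/Z'$ and $\phi^{-1}(L/Z_L) = L/Z'$, so the hypothesis $[T_{H/Z_L} : T_{(H\cap L)/Z_L}] < \infty$ lifts through $\phi$ to $[T_{H/Z'} : T_{(H\cap L)/Z'}] < \infty$. To swap $L$ for $L'$, let $M := L \cap L'$: then $H \cap M$ has finite index in both $H \cap L$ and $H \cap L'$, and the routine fact $[C:B] < \infty \Rightarrow [T_C : T_B] < \infty$ (because $T_B = T_C \cap B$) yields $[T_{H/Z'} : T_{(H\cap L')/Z'}] < \infty$. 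Proposition \ref{PropComConv} then gives $H \to L'$, completing the invariance statement and hence the corollary.

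The main obstacle will be the finite-index bookkeeping through the two quotients $A/Z_L$ and $A/Z'$ and its transfer to torsion subgroups; once these verifications are granted, the rest of the argument is purely formal.
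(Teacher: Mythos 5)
Your proof is correct and follows the route the paper indicates: Corollary \ref{cctrans} is presented as a consequence of Proposition \ref{PropComConv} and is stated without proof, and your argument supplies exactly the missing derivation from that proposition. Your organization --- first proving the saturation statement that $H\to L$ and $L'\sim L$ imply $H\to L'$ (by cutting the lattice $Z_L$ down to $Z'=Z_L\cap L'$ and transporting the virtual containment of torsion through the finite-kernel quotient $A/Z'\to A/Z_L$), then concluding via the closedness of the closure of the commensurability class of $H$ --- is a clean and complete way to carry out the finite-index bookkeeping that the paper leaves to the reader.
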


\subsection{Hereditary properties}

\begin{proposition} \label{PropHeredity0} \noindent
Let $A$ be a minimax group. Set $I=\{S\in\SA|\,\kappa_{\Crit(A)}(A/S)=\ell_{\Crit(A)}(S)=0\}$. Then
the map $w_A$ is strictly hereditary on $I$.
\end{proposition}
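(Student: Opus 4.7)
The plan is to reduce strict heredity at each $S_0\in I$ to producing a single auxiliary subgroup $S^*\in\SA$ with (a) $w_A(S^*)=w_A(S_0)-1$ and (b) $S_0\in\overline{[S^*]}$, the closure of the commensurability class of $S^*$. Once such an $S^*$ is produced, Proposition~\ref{PropComConv} supplies a sequence $S_n\in[S^*]$ converging to $S_0$, and commensurability-invariance of $w_A$ forces $w_A(S_n)=w_A(S_0)-1\neq w_A(S_0)$, hence $S_n\neq S_0$; every neighbourhood $V$ of $S_0$ then eventually contains such $S_n$, yielding the required bound $w_A(S_0)\le\sup_{S'\in V\setminus\{S_0\}}(w_A(S')+1)$. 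The case $w_A(S_0)=0$ is vacuous, so I may assume $w_A(S_0)\ge 1$.

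Since $S_0\in I$, the identity $w_A(S_0)=r(A/S_0)+\ell_{\Ncrit(A)}(S_0)+\kappa_{\Ncrit(A)}(A/S_0)$ holds, and I construct $S^*$ by splitting into three cases according to which summand is positive. \emph{First}, if $\ell_p(S_0)\ge 1$ for some non-critical $p$, I decompose $S_0=P\oplus N$ with $P\cong\Cp$ a divisible summand and set $S^*:=N$; any lattice $M$ of $N$ is then a lattice of $S_0$ with $T_{N/M}\subseteq S_0/M$, so Proposition~\ref{PropComConv} gives $S_0\in\overline{[N]}$, and the identities $r(A/N)=r(A/S_0)$, $\ell(N)=\ell(S_0)-1$, and $\kappa(A/N)=\kappa(A/S_0)$ (the last because $(A/N)/T_{A/N}=(A/S_0)/T_{A/S_0}$) yield $w_A(N)=w_A(S_0)-1$. \emph{Second}, if $\ell_{\Ncrit(A)}(S_0)=\kappa_{\Ncrit(A)}(A/S_0)=0$ but $r(A/S_0)\ge 1$, then $S_0\in I$ forces $\ell(S_0)=\kappa(A/S_0)=0$, so $(A/S_0)/T_{A/S_0}$ is a torsion-free minimax group with $\ell=0$, hence finitely generated, hence free abelian of rank $r$; the short exact sequence $0\to T_{A/S_0}\to A/S_0\to\Z^r\to 0$ splits, I lift a basis vector to $g\in A$, and set $S^*:=S_0+\langle g\rangle$. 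Then $S^*/S_0\cong\Z$ is torsion-free, so Proposition~\ref{PropComConv} gives commensurable convergence, and the splitting $A/S^*=\Z^{r-1}\oplus T_{A/S_0}$ forces $\tau(A/S^*)=\tau(A/S_0)$, whence $w_A(S^*)=w_A(S_0)-1$ via the identity $w_A=r(A/\cdot)+\ell(A)-\tau(A/\cdot)$ from the proof of Lemma~\ref{mapsusc}. \emph{Third}, if $\kappa_p(A/S_0)\ge 1$ for some non-critical $p$, I use the structure of $(A/S_0)/T_{A/S_0}$ as a finite direct sum of rank-one torsion-free minimax summands, select a summand $\Sigma$ whose prime-inversion set $S_j$ contains $p$, lift $\Sigma$ back to a subgroup $L\subseteq A/S_0$ (at worst with $L\cap T_{A/S_0}$ finite), and take $S^*$ to be the preimage of $L$ in $A$. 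Commensurable convergence again follows from virtual torsion-freeness of $S^*/S_0$, and the same identity together with $\tau(A/S^*)=\tau(A/S_0)$ delivers the required decrement.

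The main technical obstacle is this third case: the lift $L$ must be of the \emph{whole} rank-one summand $\Sigma$, not merely of a $\Z[1/p]$-subgroup, since inverting only $p$ would introduce spurious $\Cp$-torsion in $A/S^*$ for primes $q\in S_j\setminus\{p\}$, corrupting $\tau$. To handle this, I would first split $A/S_0=\Div(T_{A/S_0})\oplus B$ with $B$ reduced, which is always possible because divisible abelian subgroups are direct summands; the minimax hypothesis then makes $T_B=T_{A/S_0}/\Div(T_{A/S_0})$ finite, and one constructs the required rank-one divisible lift inside $B$ via iterated division in its reduced structure. Any finite torsion discrepancy in $L\cap T_B$ is absorbed without affecting $\tau$-counts, so the bookkeeping $\Delta r=-1$, $\Delta\tau=0$ goes through and gives $\Delta w_A=-1$ as needed.
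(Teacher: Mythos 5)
Your overall strategy coincides with the paper's: for each $S_0\in I$ with $w_A(S_0)\ge 1$ you produce a single subgroup $S^*$ with $w_A(S^*)=w_A(S_0)-1$ and with commensurable convergence $S^*\to S_0$, and then let constancy of $w_A$ on commensurability classes do the rest (this is exactly the content of Lemma~\ref{LemDIsHered0}, from which the paper deduces the proposition). The framework is sound, and your second case is correct; the point you leave implicit --- that the witnesses must lie in $I$ for heredity \emph{on} $I$ --- is harmless because $I=\{\lambda_A=0\}$ is open, so subgroups of $[S^*]$ sufficiently close to $S_0$ are automatically in $I$. However, the construction of $S^*$ has two genuine gaps, both traceable to the fact that you never perform the paper's preliminary reduction to the case where $S_0$ is torsion (replace $A$ by $A/Z$ for a lattice $Z\le S_0$; this identifies a neighbourhood of $S_0$ with $\mathcal{S}(A/Z)$ and preserves all invariants). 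In your first case, $\ell_p(S_0)\ge 1$ does \emph{not} yield a decomposition $S_0=P\oplus N$ with $P\cong\Cp$: the invariant $\ell_p$ measures quotients, not subgroups. Take $A=S_0=\Z[1/p]$ with $p$ non-critical: then $\ell_p(S_0)=1$ but $S_0$ is torsion-free, so $\Div(S_0)=0$ and no quasi-cyclic summand exists; moreover $r(A/S_0)=\kappa(A/S_0)=0$, so your first case is the only one available and the argument breaks. Only after modding out a lattice does $S_0$ become Artinian, at which point the desired summand exists.

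The second gap is in your third case: a torsion-free minimax group of finite rank is in general \emph{not} a direct sum of rank-one groups, so you cannot ``select a summand $\Sigma$ whose prime-inversion set contains $p$''. There are indecomposable rank-two examples, e.g.\ the subgroup of $\Z[1/p]^2$ consisting of pairs $(x,y)$ with $\alpha x-y\in\Z_p$ for a fixed unit $\alpha\in\Z_p\setminus\Q$; it contains $\Z^2$ with quotient $\Cp$, hence is minimax, and any rank-one summand with $\ell_p=1$ would force $\alpha\in\Q$. The paper sidesteps this entirely: it takes a cyclic subgroup $C$ of a torsion-free finite-index subgroup $L'$ of a complement of $\Div(A)$ and lets $Q$ be the preimage in $L'$ of the torsion of $L'/C$ (the pure closure of $C$); then $L'/Q$ is torsion-free, which is all that is needed to get $\tau(A/(S\oplus Q))=\tau(A/S)$ and hence the decrement $\Delta w_A=-1$, with no rank-one decomposition and no control of the ``inversion set'' of $Q$ beyond $\ell_{\Crit(A)}(Q)=0$. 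Your first and third cases need to be rebuilt along these lines.
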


The proposition readily follows from the following lemma.

\begin{lemma} \label{LemDIsHered0}
Let $A$ be a minimax group and $S\in\SA$. Suppose that
$$\kappa_{\Crit(A)}(A/S)=0\text{ and }r(A/S)+\ell_{\Ncrit(A)}(S)\ge 1$$ (if $S\in I$, these two assumptions just mean that $w_A(S)\ge 1$). Then there exists
$H\in\SA$ with commensurable convergence $H\to S$, with $w_A(H)=w_A(S)-1$ and
$\ell_{\Crit(A)}(H)=\ell_{\Crit(A)}(S)$.
\end{lemma}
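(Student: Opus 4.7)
The plan is to first reduce to the case where $S$ is torsion. If $r(S)\ge 1$, pick a torsion-free finitely generated lattice $Z \le S$; by Proposition~\ref{PropComConv}, commensurable convergence to $S$ within $\{H \ge Z\}\subset\SA$ corresponds to commensurable convergence to $S/Z$ in $\mathcal{S}(A/Z)$, and the weight function, the set of critical primes, and the hypotheses on $\ell_{\Crit}$ and $\kappa_{\Crit}$ all transfer cleanly because $Z$ is torsion-free and finitely generated. So assume $S$ is torsion and set $Q := A/T_A$, a torsion-free reduced minimax group. Then $w_A(S) = r(Q) + \tau(S) + \ell(Q)$, and the hypotheses amount to $\ell_p(Q) = \tau_p(S) = 0$ for every critical prime $p$, together with $r(Q) + \ell_{\Ncrit}(S) \ge 1$.

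\textbf{Case A ($r(Q)\ge 1$).} Pick any non-zero $\bar a \in Q$ and let $L_Q := \{q \in Q : nq \in \Z\bar a \text{ for some } n \ge 1\}$ be its isolator in $Q$. Then $L_Q$ is a pure rank-$1$ subgroup of $Q$, so $Q/L_Q$ is torsion-free of rank $r(Q)-1$; and since $\ell_p(Q) = 0$ for every critical $p$ forces $Q$ (hence any rank-$1$ subgroup of $Q$) to contain no $\Z[1/p]$-like element, we have $\ell_p(L_Q) = 0$ for every critical $p$. Split off the divisible torsion as $A = \Div(T_A) \oplus R$ with $R$ reduced, and let $L \subset R$ be the preimage of $L_Q$ under $R \twoheadrightarrow Q$. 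Then $L$ is reduced, its torsion equals the finite $T_R$, and additivity of $\ell_p$ on $0 \to T_R \to L \to L_Q \to 0$ gives $\ell_p(L) = \ell_p(L_Q)$ for all $p$. Set $H := \Div(S) \oplus L$; the sum is direct because $\Div(S) \subset \Div(T_A)$ and $L \subset R$ meet trivially. Then $\Div(H) = \Div(S) \subset S$, giving commensurable convergence $H \to S$ by Lemma~\ref{LemComConv}; $\ell_{\Crit}(H) = 0$ from the critical-prime conditions on $S$ and on $L_Q$; and a direct identification of $T_{A/H}$ shows that the torsion-free quotient of $A/H$ is $Q/L_Q$, yielding $r(A/H) = r(Q) - 1$, $\ell_p(H) = \tau_p(S) + \ell_p(L_Q)$, and $\kappa_p(A/H) = \ell_p(Q/L_Q)$. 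Summing over $p$ and using $\ell_p(L_Q) + \ell_p(Q/L_Q) = \ell_p(Q)$ gives $w_A(H) = (r(Q)-1) + \tau(S) + \ell(Q) = w_A(S) - 1$.

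\textbf{Case B ($r(Q) = 0$ and $\ell_{\Ncrit}(S) \ge 1$).} Here $A$ is torsion. Pick a non-critical prime $p$ with $\ell_p(S) \ge 1$ and let $H \subsetneq S$ be obtained by removing one $\Cp$-summand from $\Div(S)$. Then $H \subset S$ gives $\Div(H) \subset S$ and hence commensurable convergence for free; $\ell(H) = \ell(S) - 1$, and since $A$ is torsion, $w_A(H) = \ell(H) = w_A(S) - 1$; finally $\ell_{\Crit}(H) = \ell_{\Crit}(S) = 0$ because only a non-critical $\Cp$ was deleted. The main obstacle is Case A: the naïve attempt $H = \Div(S) + \langle a\rangle$ only decreases $w_A$ by exactly $1$ when the image $\bar a \in Q$ is primitive, which can fail whenever $Q$ is $p$-divisible for some non-critical $p$ (typically $Q = \Z[1/p]$), in which case the cyclic choice drops the weight by more than one. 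Passing to the isolator $L_Q$---which is always pure and, by the critical-prime hypothesis, automatically avoids critical-prime divisibility---and lifting it through the reduced summand $R$ ensures that every $\Cp$-divisibility living in $Q$ is faithfully absorbed into $L$, so that the drop in rank is compensated by the correct increase in $\ell(H)$ and no leftover contribution appears in $\kappa(A/H)$.
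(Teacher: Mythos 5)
Your construction is essentially the paper's: reduce to $S$ torsion via a lattice, then in the positive-rank case adjoin a rank-one pure (isolated) subgroup of the torsion-free part having $\ell_{\Crit(A)}=0$, and in the Artinian case delete a non-critical quasi-cyclic summand; the paper's ``inverse image in $L'$ of the torsion of $L'/C$'' is exactly your isolator $L_Q$, and your appeal to Lemma~\ref{LemComConv} replaces the paper's explicit sequence $k!\,Q\oplus S\to S$, which is a legitimate (arguably cleaner) shortcut.

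One point needs correcting, though it does not break the argument. You assert that the hypotheses ``amount to $\ell_p(Q)=\tau_p(S)=0$ for every critical prime $p$.'' They do not: with $S$ torsion, $\kappa_{\Crit(A)}(A/S)=0$ says only that $\ell_p(Q)=0$ for critical $p$; nothing forces $\tau_p(S)=0$ there (take $A=(C_{2^\infty})^2\oplus\Z$ and $S=C_{2^\infty}\oplus 0\oplus 0$: the hypotheses hold but $\ell_2(S)=1$ with $2$ critical). Consequently the conclusion you must verify is $\ell_{\Crit(A)}(H)=\ell_{\Crit(A)}(S)$, not $\ell_{\Crit(A)}(H)=0$; this matters downstream, since Lemma~\ref{LemDIsHered} applies the present lemma precisely to subgroups with $\lambda_A(S)=\ell_{\Crit(A)}(S)=1$. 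Fortunately your $H=\Div(S)\oplus L$ satisfies $\ell_{\Crit(A)}(H)=\ell_{\Crit(A)}(\Div(S))+\ell_{\Crit(A)}(L)=\ell_{\Crit(A)}(S)+0$ (and likewise in Case~B only a non-critical summand is removed), so the required equality holds in all cases; just replace the erroneous reformulation of the hypotheses and the ``$=0$'' claims by this computation.
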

\proof We can assume that $S$ is torsion. Indeed, let $Z$ be a lattice of $S$. We can check that $A/Z$ and $S/Z$ satisfy the same hypotheses as $A$ and $S$, and that $w_{A/Z}(H/Z)=w_A(H)$ for any $H\le A$ containing $Z$.

\begin{itemize}\item If $r(A)\ge 1$, then there exists a torsion-free subgroup $Q \le A$
of rank one such that $\ell_{\Crit(A)}(Q)=0$ and $\tau(A/(S\oplus Q))=\tau(A/S)$. Indeed, take a direct complement $L$ of $\Div(A)$ in $A$ and a torsion-free finite index subgroup $L'$ of $L$, a cyclic subgroup $C \le L'$, and the inverse image $Q \le L'$
of the torsion of $L'/C$. As $0=\kappa_{\Crit(A)}(A/S)=\ell_{\Crit(A)}(L)=0$, we have $\ell_{\Crit(A)}(Q)=0$. Now $\tau(A/(S\oplus Q))=\tau(T_{A/S}\oplus L/Q)=\tau(A/S)$.

Setting $H=S\oplus Q$, we
have $\ell_{\Crit(A)}(H)=\ell_{\Crit(A)}(S)$ and $$\begin{array}{ccc}
w_A(H)&=&r(A/H)+\ell(A)-\tau(A/H)\\
&=& r(A/S)-1+\ell(A)-\tau(A/S)\\ & = & w_A(S)-1.
\end{array}$$

 Now $S$ is the limit of the subgroups $H_k:=k!Q \oplus S \le H$ ($k \in \N$), which
have finite index in $H$. Indeed, observe that 
\begin{itemize}
\item $nQ$ is close to $\{0\}$ if $n$ has some large prime divisor $p$ such that no elements of $Q$ is divisible by every power of $p$;
\item $nQ$ has finite index in $Q$ for any $n\ge1$, because $Q/nQ$ is Artinian with finite exponent.
\end{itemize}

\item If $r(A)=0$, then $A$ is Artinian, and by assumption this forces $\ell_{\Ncrit(A)}(S)\ge 1$. So $S$
has a direct summand $P$ isomorphic to $\Cp$ for some non-critical
prime $p$. Let $H$ be a direct complement of $P$ in $S$. We have
$w_A(H)=w_A(S)-1$ and $\ell_{\Crit(A)}(H)=\ell_{\Crit(A)}(S)$.

Denote by $H_k$ the direct sum of $H$ with the subgroup of $P$ of
order $p^k$. Then $H_k$ is commensurable to $H$ and $H_k$ tends to
$S$ as $k$ goes to infinity.\qedhere
\end{itemize}\endproof

\subsection{Conclusion in the scattered case}

\begin{theorem}\label{thm:scat}
Let $A$ be a minimax group with no critical primes. Then $\SA$ is scattered, and the Cantor-Bendixson rank of $S\in\SA$ is given by $w_A(S)$. The Cantor-Bendixson rank of $\SA$ is $h(A)+1$, and $\SA$ is homeomorphic to $D^{h(A)}\times [n(A)]$, where $n(A)$ is the number of subgroups commensurable to $T_A$.
\end{theorem}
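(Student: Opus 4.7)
The plan is to combine the heredity and semi-continuity properties of $w_A$ already established with the abstract characterization Lemma \ref{carcb}, and then invoke Proposition \ref{mazur} to read off the homeomorphism type.

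First I apply Lemma \ref{carcb} with $X = \SA$, $I = \SA$, $C = \emptyset$ and $f = w_A$. Condition (i) is vacuous and (ii) is tautological; in (iii) the $I$-hereditary property reduces to $f(x) \le f(x)$ since $I = X$, while upper semi-continuity of $w_A$ is Lemma \ref{mapsusc}. For (iv), the hypothesis $\Crit(A) = \emptyset$ makes the set $I$ of Proposition \ref{PropHeredity0} coincide with all of $\SA$, which gives the strict heredity, and Corollary \ref{noncritssci} provides the strict upper semi-continuity. Lemma \ref{carcb} then yields $\mathcal{I}(\SA) = \SA$ (so $\SA$ is scattered) together with the identification $\CB_{\SA}(S) = w_A(S)$ for every $S$.

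Second, I determine the maximum of $w_A$. From the identity $w_A(S) = r(A/S) + \ell(A) - \tau(A/S)$ used in the proof of Lemma \ref{mapsusc}, together with the trivial bounds $r(A/S) \le r(A)$ and $\tau(A/S) \ge 0$, one has $w_A(S) \le h(A)$, with equality iff $S$ is torsion and $\tau(A/S) = 0$. For $S \le T_A$ one checks easily that $T_{A/S} = T_A/S$, so $\tau(A/S) = 0$ means $\ell_p(T_A/S) = 0$ for every prime $p$. Since $A$ is minimax, $T_A$ is Artinian (its intersection with a lattice is finite, and the quotient embeds in the Artinian group $A/Z$), hence so is $T_A/S$; but an infinite Artinian torsion abelian group has a non-trivial divisible part containing some $\Cp$ as a direct summand, contradicting $\ell_p = 0$. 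Thus $T_A/S$ is finite, i.e.\ $S$ has finite index in $T_A$, and conversely every such $S$ satisfies $w_A(S) = h(A)$. Any finite-index subgroup of $T_A$ must contain $\Div(A)$ (a divisible subgroup has no proper finite-index subgroup), so the rank-$h(A)$ points are in bijection with subgroups of the finite group $T_A/\Div(A)$, yielding exactly $n(A)$ of them.

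Finally, $\SA$ is compact Hausdorff as a closed subspace of $2^A$, and by the above it is scattered of Cantor-Bendixson rank $h(A)+1$ with precisely $n(A)$ points of maximal rank $h(A)$, so Proposition \ref{mazur} gives $\SA \cong D^{h(A)} \times [n(A)]$. The substance of the argument is the strict heredity of $w_A$ in Proposition \ref{PropHeredity0}, which does the real work of driving the Cantor-Bendixson rank down by one; the only other step requiring any genuine care is extracting the finiteness of $T_A/S$ from $\tau(A/S)=0$, which is where the Artinian nature of $T_A$ (inherited from $A$ being minimax) is essentially used.
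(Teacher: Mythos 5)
Your proof is correct and follows essentially the same route as the paper: apply Lemma \ref{carcb} with $C=\emptyset$ using Proposition \ref{PropHeredity0} and Corollary \ref{noncritssci} (both applicable to all of $\SA$ since $\Crit(A)=\emptyset$), then read off the maximum of $w_A$ from the identity $w_A(S)=h(A)-(r(S)+\tau(A/S))$ and conclude via Proposition \ref{mazur}. You merely make explicit two points the paper leaves implicit, namely the identification of the maximal-rank points with the finite-index subgroups of $T_A$ (hence with the $n(A)$ subgroups of $T_A/\Div(A)$) and the appeal to Proposition \ref{mazur}.
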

\proof Suppose that $A$ has no critical primes. Then the subset $I$ of Proposition \ref{PropHeredity0} coincides with all of $\SA$, so that $w_A$ is strictly hereditary on $\SA$. Moreover, $w_A$ is strictly upper semi-continuous on $\SA$ by Corollary \ref{noncritssci}. So we can apply Lemma \ref{carcb} (with $C=\emptyset$) to obtain that $\SA$ is scattered and that the Cantor-Bendixson rank of an element $S\in\SA$ is $w_A(S)$.

Writing $w_A(S)=h(A)-(r(S)+\tau(A/S))$, we see that the maximal value of $w_A$ is given by $h(A)$, and is attained exactly for subgroups commensurable to $T_A$.
\endproof


\section{The leveled weight and condensation on $\SA$}\label{lwei}

Again, in this section, all Artinian and minimax groups are assumed abelian.

\subsection{The invariant $\gamma$}

Let $V$ be a set of primes, and $\Z_V$ be the ring of rationals whose denominator has no divisor in $V$.

Let $A$ be an abelian group. Then $\Hom(A,\Z_V)$ is a torsion-free $\Z_V$-module. Its rank, i.e. the dimension of the $\Q$-vector space $\Hom(A,\Z_V)\otimes_{\Z_V}\Q$, is denoted by $\gamma_V(A)$. Note that $\gamma_\emptyset(A)=r(A)$.

\begin{lemma}
If $A$ has finite rank $r(A)$, then $\Hom(A,\Z_V)$ is a finitely generated $\Z_V$-module, free of rank $\gamma_V(A)$, and $\gamma_V(A)\le r(A)$.
\end{lemma}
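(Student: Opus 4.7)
The plan is to realize $\Hom(A,\Z_V)$ as a submodule of a finitely generated free $\Z_V$-module, exploiting that $\Z_V$ is a principal ideal domain (being a localization of $\Z$).

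First, since $A$ has finite rank $r=r(A)$, pick a free abelian subgroup $F\le A$ of rank $r$ with $A/F$ torsion. The restriction map
\[
\rho\colon \Hom(A,\Z_V)\longrightarrow \Hom(F,\Z_V)\cong \Z_V^{r}
\]
is injective: if $f\colon A\to \Z_V$ kills $F$, then $f$ factors through the torsion group $A/F$, and since $\Z_V$ is torsion-free as an abelian group, $f=0$. This exhibits $\Hom(A,\Z_V)$ as a $\Z_V$-submodule of $\Z_V^r$.

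Next, since $\Z_V$ is a PID, any submodule of a finitely generated free $\Z_V$-module is itself free of rank at most that of the ambient module. Hence $\Hom(A,\Z_V)$ is free of finite rank $\rho\le r$. It remains only to identify this rank $\rho$ with $\gamma_V(A)$, which is immediate from the definition: tensoring the free module $\Hom(A,\Z_V)\cong \Z_V^\rho$ with $\Q$ over $\Z_V$ yields a $\Q$-vector space of dimension $\rho$, and by definition $\gamma_V(A)=\dim_\Q\bigl(\Hom(A,\Z_V)\otimes_{\Z_V}\Q\bigr)$. Therefore $\gamma_V(A)=\rho\le r(A)$.

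There is no real obstacle here; the only point worth double-checking is that $\Z_V$ is indeed a PID so that the structure theorem applies, which follows because $\Z_V=S^{-1}\Z$ for $S$ the multiplicative set of integers not divisible by any prime in $V$, and localizations of PIDs are PIDs.
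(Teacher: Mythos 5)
Your proof is correct and is essentially the paper's argument: both embed $\Hom(A,\Z_V)$ into $\Z_V^{r(A)}$ via evaluation/restriction on a maximal $\Z$-free family and then invoke the fact that $\Z_V$ is a PID. You merely make explicit two points the paper leaves implicit, namely the injectivity of the restriction map (via torsion-freeness of $\Z_V$) and the identification of the free rank with $\gamma_V(A)$ by tensoring with $\Q$.
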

\begin{proof}
If $n=r(A)$ and $(e_1,\dots,e_n)$ is a maximal $\Z$-free family in $A$, then the mapping $f\mapsto (f(e_1),\dots,f(e_n))$ embeds $\Hom(A,\Z_V)$ as a submodule of the free module of rank $n$.
In particular, as $\Z_V$ is principal, $\Hom(A,\Z_V)$ is a free $\Z_V$-module of rank $\gamma_V(A)$ and $\gamma_V(A)\le r(A)$.
\end{proof}

Define $a_V(A)=r(A)-\gamma_V(A)$. Besides, define $U_V(A)$ as the intersection of all kernels of homomorphisms $A\to\Z_V$; this is a characteristic subgroup of $A$.

\begin{lemma} \label{LemAQuot}
Let $A$ be an abelian group of finite rank and let $S \le A$. We have then:
 $$\gamma_V(A)\ge\gamma_V(A/S) \ge \gamma_V(A)-\gamma_V(S);$$
$$ a_V(A)-r(S)\le a_V(A/S)\le a_V(A)-a_V(S).$$
In particular, we have $\gamma_V(A/S)=\gamma_V(A)$ and $a_V(A/S)=a_V(A)$ if $S$ is torsion.
\end{lemma}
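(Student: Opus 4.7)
The plan is to apply the left-exact contravariant functor $\Hom(-,\Z_V)$ to the short exact sequence
$$0\longrightarrow S\longrightarrow A\longrightarrow A/S\longrightarrow 0$$
and then read off the inequalities by comparing $\Z_V$-ranks (equivalently, $\Q$-dimensions after tensoring with $\Q$). Since $\Z_V$ is a principal ideal domain and all three of $A$, $S$, $A/S$ have finite $\Z$-rank, the preceding lemma guarantees that each of $\Hom(A,\Z_V)$, $\Hom(S,\Z_V)$, $\Hom(A/S,\Z_V)$ is a finitely generated free $\Z_V$-module, so ranks are well behaved.

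Left-exactness yields the exact sequence
$$0\longrightarrow \Hom(A/S,\Z_V)\longrightarrow \Hom(A,\Z_V)\longrightarrow \Hom(S,\Z_V).$$
Tensoring with $\Q$ over $\Z_V$ preserves exactness (flatness of $\Q$ over $\Z_V$), and taking $\Q$-dimensions immediately gives the two halves of the first chain of inequalities: the injectivity of the left arrow gives $\gamma_V(A/S)\le\gamma_V(A)$, while rank-nullity applied to the middle arrow gives $\gamma_V(A)\le\gamma_V(A/S)+\gamma_V(S)$, hence $\gamma_V(A/S)\ge\gamma_V(A)-\gamma_V(S)$.

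For the second chain, I will use additivity of the ordinary rank in short exact sequences, $r(A)=r(S)+r(A/S)$, together with $a_V(X)=r(X)-\gamma_V(X)$. Substituting and applying the two bounds already proved:
\begin{align*}
a_V(A/S) & = r(A)-r(S)-\gamma_V(A/S)\ge r(A)-r(S)-\gamma_V(A)=a_V(A)-r(S),\\
a_V(A/S) & = r(A)-r(S)-\gamma_V(A/S)\le r(A)-r(S)-\gamma_V(A)+\gamma_V(S)=a_V(A)-a_V(S).
\end{align*}

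For the last claim, assume $S$ is torsion. Since $\Z_V$ is torsion-free, every homomorphism $S\to\Z_V$ is trivial, so $\gamma_V(S)=0$ and the map $A\to A/S$ induces an isomorphism $\Hom(A/S,\Z_V)\cong\Hom(A,\Z_V)$, giving $\gamma_V(A/S)=\gamma_V(A)$. Together with $r(A/S)=r(A)$ this yields $a_V(A/S)=a_V(A)$. No serious obstacle is expected: the only mild subtlety is ensuring that tensoring with $\Q$ correctly computes the $\Z_V$-rank, which is immediate from the previous lemma.
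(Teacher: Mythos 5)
Your argument is correct and follows essentially the same route as the paper: apply the left-exact functor $\Hom(-,\Z_V)$ to the short exact sequence, compare ranks to get the first chain of inequalities, and deduce the second chain from $a_V=r-\gamma_V$ together with additivity of rank. The paper's proof is just a terser version of the same reasoning, noting that the second chain is "equivalent" to the first.
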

\begin{proof}
From the exact sequence $0\to S\to A\to A/S\to 0$, we get the exact sequence of $\Z_V$-modules $$0\to\Hom(A/S,\Z_V)\to\Hom(A,\Z_V)\to \Hom(S,\Z_V),$$ so that $\Hom(A,\Z_V)$ lies in an extension of $\Hom(A/S,\Z_V)$ by some submodule of $\Hom(S,\Z_V)$.
This gives the first inequality, and the second one is equivalent to it.
\end{proof}

\begin{lemma}
Let $A$ be an abelian group of finite rank. We have $a_V(A)=r(U_V(A))$ and $\gamma_V(A)=r(A/U_V(A))$.
\end{lemma}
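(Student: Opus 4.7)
The plan is to pivot on the previous lemma, which tells us that $\Hom(A,\Z_V)$ is a free $\Z_V$-module of rank $n:=\gamma_V(A)$. Once this is in hand, both equalities will reduce to showing $r(A/U_V(A))=\gamma_V(A)$, since rank is additive in short exact sequences (tensoring with $\Q$ is exact), whence
\[
r(U_V(A))=r(A)-r(A/U_V(A))=r(A)-\gamma_V(A)=a_V(A).
\]

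First I would fix a $\Z_V$-basis $f_1,\dots,f_n$ of $\Hom(A,\Z_V)$ and assemble them into a single homomorphism
\[
\Phi:A\longrightarrow \Z_V^n,\qquad \Phi(a)=(f_1(a),\dots,f_n(a)).
\]
The key point is that $\ker(\Phi)=U_V(A)$. The inclusion $U_V(A)\subseteq \ker(\Phi)$ is clear by definition of $U_V(A)$. For the reverse inclusion, any $f\in\Hom(A,\Z_V)$ is a $\Z_V$-linear combination $f=\sum c_i f_i$, hence $f$ vanishes on $\ker(\Phi)$; thus $\ker(\Phi)\subseteq\bigcap_{f}\ker(f)=U_V(A)$. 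This step uses crucially that $f_1,\dots,f_n$ span over $\Z_V$ (not merely over $\Q$), and I expect this is the only genuinely delicate point — it is precisely what the previous lemma (freeness) buys us.

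From the resulting injection $A/U_V(A)\hookrightarrow \Z_V^n$ we deduce $r(A/U_V(A))\le n=\gamma_V(A)$, since rank is monotone on subgroups of a torsion-free module of rank $n$. For the reverse inequality, each $f_i$ factors through $A/U_V(A)$, yielding $\Z_V$-linearly independent homomorphisms $\bar f_i:A/U_V(A)\to \Z_V$; hence $\gamma_V(A/U_V(A))\ge n$. Combining with the general bound $\gamma_V\le r$ from the previous lemma gives $r(A/U_V(A))\ge n$.

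Putting the two inequalities together yields $r(A/U_V(A))=\gamma_V(A)$, and the additivity of rank then gives $r(U_V(A))=a_V(A)$, completing the proof.
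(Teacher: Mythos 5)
Your proof is correct and follows essentially the same route as the paper: both reduce to showing $r(A/U_V(A))=\gamma_V(A)$ and identify $U_V(A)$ as the kernel of the evaluation map into $\Z_V^n$ built from a spanning family of $\Hom(A,\Z_V)$ (the paper uses a maximal $\Z$-free family plus torsion-freeness of $\Z_V$ where you invoke a genuine $\Z_V$-basis from the freeness lemma). The only cosmetic difference is that for the inequality $\gamma_V(A)\le r(A/U_V(A))$ you cite the bound $\gamma_V\le r$ applied to $A/U_V(A)$, whereas the paper re-runs the underlying lattice-restriction argument directly.
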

\begin{proof}
The two statements are obviously equivalent; let us prove that $\gamma_V(A)=r(A/U_V(A))$.

Set $B=A/U_V(A)$. If $i:\Z^{r(B)}\to B$ is an embedding as a lattice, then, as $\Hom(B/\text{Im}(i),\Z_V)=\{0\}$, it induces an embedding of $\Hom(B,\Z_V)$ into\\ $\Hom(\Z^{r(B)},\Z_V)={\Z_V}^{r(B)}$, which is a $\Z_V$-module homomorphism. So $\gamma_V(A)\le r(A/U_V(A))$.

Conversely, if $(f_1,\dots,f_m)$ is a maximal $\Z$-free family in $\Hom(A,\Z_V)$, with $m=\gamma_V(A)$, then $A/\bigcap\Ker(f_i)$ embeds into ${\Z_V}^m$. But $\bigcap\Ker(f_i)$ is reduced to $U_V(A)$, so $r(A/U_V(A))\le\gamma_V(A)$.
\end{proof}

\begin{corollary}
Let $A$ be a minimax group. We have $a_V(A)=0$ if and only if $\kappa_p(A)=0$ for every $p\in V$.\label{azero}
\end{corollary}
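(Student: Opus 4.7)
Since $\Z_V$ is torsion-free, $\Hom(A,\Z_V)=\Hom(A/T_A,\Z_V)$, hence $\gamma_V(A)=\gamma_V(A/T_A)$; combined with $r(A)=r(A/T_A)$ and $\kappa_p(A)=\kappa_p(A/T_A)$, this lets me assume $A$ torsion-free. Fix then a lattice $Z\cong\Z^n$ in $A$, so that $A/Z=\bigoplus_p(A/Z)_p$ with $(A/Z)_p=\Cp^{\kappa_p(A)}\oplus F_p$, $F_p$ finite, and only finitely many primes contributing.

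For the ``if'' direction, assume $\kappa_p(A)=0$ for every $p\in V$. Since $\Z_V$ is flat, tensoring $0\to Z\to A\to A/Z\to 0$ with $\Z_V$ preserves exactness. A summand $(A/Z)_p$ with $p\notin V$ dies (as $p$ is invertible in $\Z_V$ while $(A/Z)_p$ is $p$-primary), and with $p\in V$ remains $(A/Z)_p$ itself (every $b$ coprime to $V$ is coprime to $p$, hence invertible on the $p$-primary group $(A/Z)_p$, so $(A/Z)_p$ is already a $\Z_V$-module). Under the hypothesis the surviving summands are finite, so $A\otimes\Z_V$ is a finitely generated, torsion-free $\Z_V$-module of rank $n$; as $\Z_V$ is a PID, $A\otimes\Z_V\cong\Z_V^n$, whence $\Hom(A,\Z_V)=\Hom_{\Z_V}(A\otimes\Z_V,\Z_V)\cong\Z_V^n$ and $a_V(A)=0$.

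For the ``only if'' direction, where the real work lies, suppose $\kappa_p(A)\ge 1$ for some $p\in V$. Lift a compatible tower of generators of orders $p^k$ in the embedded $\Cp\hookrightarrow A/Z$ to elements $a_k\in A$, so that $pa_{k+1}-a_k\in Z$; set $w_k:=p^ka_k\in Z=\Z^n$. Multiplying the lift relation by $p^k$ gives $w_{k+1}-w_k\in p^k\Z^n$, so $(w_k)$ is $p$-adically Cauchy in $\Z^n$ and converges to some $w_\infty\in\hat\Z_p^n$. Moreover $w_k\notin p\Z^n$ (otherwise $p^{k-1}a_k\in Z$, contradicting that $a_k+Z$ has order $p^k$), so $w_\infty\equiv w_1\not\equiv 0\pmod p$, in particular $w_\infty\ne 0$.

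For any $f\in\Hom(A,\Z_V)$, its $\Q$-linear extension $\tilde f\colon\Q^n\to\Q$ satisfies $\tilde f(w_k)=p^kf(a_k)\in p^k\Z_V\subseteq p^k\hat\Z_p$, so $\tilde f(w_k)\to 0$ in $\hat\Z_p$. Writing $\tilde f=\sum\alpha_ie_i^*$ in the dual basis of $Z$ and passing to the limit yields the relation $\sum_i\alpha_iw_{\infty,i}=0$ in $\hat\Z_p$. Because $w_\infty\ne 0$ this is a nonzero $\hat\Z_p$-linear functional on $\Q^n$, so $\Hom(A,\Z_V)\otimes\Q$ is contained in its kernel, a proper $\Q$-subspace of $\Q^n$ of dimension at most $n-1$. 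Thus $\gamma_V(A)\le n-1$ and $a_V(A)\ge 1$. The main obstacle is precisely this last step: producing the nonzero $p$-adic limit $w_\infty$ and recognizing that the resulting single constraint, though only $\hat\Z_p$-linear a priori, forces a genuine drop in $\Q$-dimension.
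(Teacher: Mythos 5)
Your proof is correct, but it takes a genuinely different route from the paper's. The paper leans on two pieces of machinery established just before and just after the corollary: the identity $a_V(A)=r(U_V(A))$, $\gamma_V(A)=r(A/U_V(A))$, and Lemma~\ref{multiple} on rank-one minimax subgroups of $\Q$. With these, the direction you call the hard one ($a_V(A)=0\Rightarrow\kappa_p(A)=0$ for $p\in V$) is immediate for the paper: $U_V(A)$ torsion forces $U_V(A)=T_A$, so $A/T_A$ embeds into $\Z_V^{r(A)}$ and hence has $\ell_p=0$ for every $p\in V$; the paper's real work is the converse, where Lemma~\ref{multiple} is used to push the image of $A/T_A$ in $\Q^{r(A)}$ into a multiple of $\Z_V^{r(A)}$. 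You instead work directly with $\Hom(A,\Z_V)$ and avoid both auxiliary results: your ``if'' direction uses flatness of $\Z_V$ and the adjunction $\Hom_{\Z}(A,\Z_V)\cong\Hom_{\Z_V}(A\otimes\Z_V,\Z_V)$ to compute the rank outright, and your ``only if'' direction manufactures, from a copy of $\Cp$ in $A/Z$ with $p\in V$, a nonzero $p$-adic linear functional killing all of $\Hom(A,\Z_V)$ — essentially the classical $p$-adic description of finite-rank torsion-free groups, done by hand. All the steps check out (the key points being $w_\infty\not\equiv 0 \pmod p$, the inclusion $\Z_V\subset\Z_p$ for $p\in V$, and the observation that a nonzero $\Z_p$-linear functional restricts to a nonzero $\Q$-linear constraint on $\Q^n$, hence $\gamma_V(A)\le r(A)-1$). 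Your version is longer and more computational, but self-contained; the paper's is shorter at the cost of routing through $U_V(A)$ and the rank-one Lemma~\ref{multiple}, whose reduction from rank $r$ to rank one is itself left somewhat implicit there.
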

\proof If $a_V(A)=0$, then $U_V(A)$ is torsion and hence coincides with $T_A$. Therefore $A/T_A$ embeds into ${\Z_V}^{r(A)}$. As a result $\kappa_p(A)=\ell_p(A/T_A)=0$ for every $p\in V$. Conversely, suppose that $\kappa_p(A)=0$ for every $p\in V$. We have an embedding of $A/T_A$ into $\Q^{r(A)}$. The assumption implies, using Lemma \ref{multiple} below, that the image is contained in a multiple of ${\Z_V}^r(A)$. So $U_V(A)$ is torsion, i.e. $a_V(A)=0$.
\endproof

\begin{lemma}\label{multiple}
Let $A$ be a minimax subgroup of $\Q$ and $n\ge 1$. Suppose that $\ell_p(A)=0$ for every prime $p$ not dividing $n$. Then $A\le\lambda\Z[1/n]$ for some $\lambda\in\Q^*$. 
\end{lemma}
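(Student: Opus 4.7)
The plan is to reduce to the case $\Z \le A$ and then exploit the Artinian structure of $A/\Z$ to control the prime denominators uniformly by a single scalar.

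After handling the trivial case $A = 0$, I would pick any nonzero $a_0 \in A$ and replace $A$ by $a_0^{-1}A$, so that $\Z \le A \le \Q$. Since $A$ has rank one and $\Z$ is finitely generated of the same rank, $\Z$ is commensurable to any lattice of $A$, hence is itself a lattice, so $A/\Z$ is Artinian. An Artinian torsion abelian group decomposes as a finite direct sum of its primary components $(A/\Z)_p \le \Cp$, each of which is either equal to $\Cp$ or finite cyclic, and only finitely many of them are nonzero.

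The key step is to translate the hypothesis $\ell_p(A) = 0$ for $p \nmid n$ using the characterization recalled in the introduction: $\ell_p(A)$ equals the largest $k$ such that $\Cp^k$ embeds into $A/Z$ for any lattice $Z$. Applied to $Z = \Z$, this forces $(A/\Z)_p$ to be a proper subgroup of $\Cp$, hence finite cyclic, of the form $p^{-e_p}\Z/\Z$ for some integer $e_p \ge 0$, for every $p \nmid n$. In valuation-theoretic language, every $a \in A$ then satisfies $v_p(a) \ge -e_p$ at such primes.

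To finish, I would set $\lambda := \prod_{p \in P} p^{-e_p}$, where $P$ is the \emph{finite} set of primes $p \nmid n$ with $e_p > 0$. For any $a \in A$ and any prime $p \nmid n$ one checks $v_p(\lambda^{-1}a) \ge 0$: for $p \in P$ using $v_p(a) + e_p \ge 0$, and for $p \nmid n$ outside $P$ using $(A/\Z)_p = 0$, which gives $v_p(a) \ge 0$ directly. Thus $\lambda^{-1}a \in \Z[1/n]$, proving $A \le \lambda \Z[1/n]$. The only delicate point is that the hypothesis constrains $A$ prime by prime, so one must invoke the Artinian property to ensure that only finitely many primes contribute non-trivially, which is exactly what allows a \emph{single} scalar $\lambda$ to simultaneously absorb all the ``bad'' denominators.
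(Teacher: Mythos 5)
Your argument is correct and follows essentially the same route as the paper's: both reduce to the primary decomposition of the Artinian quotient $A/\Z$ (the paper phrases this as the image of $A$ in $\Q/\Z=\bigoplus_p\Cp$), use minimaxity to get finitely many nonzero components and the hypothesis $\ell_p(A)=0$ to get finiteness of the components at $p\nmid n$. The only difference is cosmetic: you produce $\lambda$ explicitly by bounding the $p$-adic valuations, whereas the paper first observes that $A$ is virtually contained in $\Z[1/n]$ and then invokes local cyclicity of subgroups of $\Q$ to absorb the finite quotient into a single scalar.
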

\proof Let $B=\bigoplus B_p$ be the image of $A$ in $\Q/\Z=\bigoplus_p\Cp$. As $A$ is a minimax group, $B_p=0$ for all but finitely many $p$'s, and $B_p$ is finite when $p$ does not divide $n$. Hence $B$ is virtually contained in $\bigoplus_{p|n}\Cp=\Z[1/n]/\Z$. So $A$ is virtually contained in $\Z[1/n]$. As $A$ is locally cyclic, $A$ is generated by $A\cap\Z[1/n]$ and some rational $u/v$. Then $A\le v^{-1}\Z[1/n]$.
\endproof

\begin{lemma}
Let $A$ be an abelian group with $r(A)<\infty$. The maps $S\mapsto a_V(A/S)$, $S\mapsto \gamma_V(A/S)$ are upper semi-continuous on $\mathcal{S}(A)$.\label{gammausc}
\end{lemma}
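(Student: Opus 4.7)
The plan is to reduce upper semi-continuity of $S \mapsto a_V(A/S)$ and $S \mapsto \gamma_V(A/S)$ to lower semi-continuity of two rank-valued maps that depend on $S$ only through its intersection with, and image modulo, the fixed characteristic subgroup $U_V(A)$. Concretely, the aim is to establish the identities
\begin{equation*}
\gamma_V(A/S) = \gamma_V(A) - r(\pi(S)) \quad \text{and} \quad a_V(A/S) = a_V(A) - r(S \cap U_V(A)),
\end{equation*}
where $\pi : A \to \Z_V^{\gamma_V(A)}$ is a fixed homomorphism with $\ker \pi = U_V(A)$. Once these formulas are in hand, upper semi-continuity will follow because both $S \mapsto r(\pi(S))$ and $S \mapsto r(S \cap U_V(A))$ are lower semi-continuous on $\mathcal{S}(A)$, by the same witness-tracking argument that underlies Lemma \ref{rusc0}.

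The key step, which I expect to be the main obstacle, is the derivation of the formula for $\gamma_V(A/S)$. I would first construct $\pi$: fix a maximal $\Z$-free family $f_1, \dots, f_m$ in $\Hom(A, \Z_V)$, with $m = \gamma_V(A)$. For any further $\phi \in \Hom(A, \Z_V)$, maximality gives $n\phi \in \sum_i \Z f_i$ for some $n \ge 1$, and since $\Z_V$ is torsion-free, $\ker \phi = \ker(n\phi) \supseteq \bigcap_i \ker f_i$. Hence $U_V(A) = \bigcap_i \ker f_i$, so that $\pi := (f_1, \dots, f_m) : A \to \Z_V^m$ has kernel exactly $U_V(A)$, realizing $B := A/U_V(A)$ as a subgroup of $\Z_V^m$ of rank $m$. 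Any subgroup $C$ of $\Z_V^m$ has $U_V(C) = 0$ via the coordinate projections, so $\gamma_V(C) = r(C)$; applied to $B$ and to $\pi(S) \le B$, Lemma \ref{LemAQuot} then forces $\gamma_V(B/\pi(S)) = r(B) - r(\pi(S))$. Since any homomorphism $A \to \Z_V$ vanishing on $S$ automatically vanishes on $U_V(A)$, we have $\Hom(A/S, \Z_V) = \Hom(B/\pi(S), \Z_V)$, whence the first identity. The second then follows by subtracting $\gamma_V(A/S)$ from $r(A/S) = r(A) - r(S)$ and using the exact sequence $0 \to S \cap U_V(A) \to S \to \pi(S) \to 0$, which gives $r(S) = r(S \cap U_V(A)) + r(\pi(S))$.

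It remains to check the lower semi-continuity of $S \mapsto r(\pi(S))$ and $S \mapsto r(S \cap U_V(A))$ at an arbitrary $S_0 \in \mathcal{S}(A)$. If $r(\pi(S_0)) \ge k$, choose $s_1, \dots, s_k \in S_0$ with $\pi(s_1), \dots, \pi(s_k)$ being $\Z$-independent in $\Z_V^m$; on the Chabauty-open neighborhood of $S_0$ where each $s_i$ still belongs to $S$, the images $\pi(s_i) \in \pi(S)$ remain $\Z$-independent, so $r(\pi(S)) \ge k$. The argument for $r(S \cap U_V(A))$ is identical, choosing $\Z$-independent witnesses directly in $S_0 \cap U_V(A)$. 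Together with the two identities of the first paragraph, this yields the desired upper semi-continuity of $\gamma_V(A/\cdot)$ and $a_V(A/\cdot)$.
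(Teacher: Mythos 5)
Your proof is correct, but it takes a genuinely different route from the paper's. The paper first reduces to the case where $S_0$ is torsion, by passing to the clopen neighbourhood $\{S \supseteq Z\}\cong\mathcal{S}(A/Z)$ for a finitely generated free subgroup $Z\le S_0$ of full rank (the same reduction as in Lemma \ref{taulsc0}), and then simply observes that by Lemma \ref{LemAQuot} both $S\mapsto\gamma_V(A/S)$ and $S\mapsto a_V(A/S)$ attain their global maxima $\gamma_V(A)$ and $a_V(A)$ at torsion subgroups, so upper semi-continuity at $S_0$ is automatic. You instead establish the exact identities $\gamma_V(A/S)=\gamma_V(A)-r(\pi(S))$ and $a_V(A/S)=a_V(A)-r(S\cap U_V(A))$ and deduce upper semi-continuity from the lower semi-continuity of the two rank maps, by the same witness-tracking argument as Lemma \ref{rusc0}. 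Your route is longer but yields strictly more than is needed: an explicit formula for the invariants of $A/S$ as functions of $S$, valid everywhere, rather than just the local comparison at each point. One small imprecision to fix: Lemma \ref{LemAQuot} by itself only supplies the lower bound $\gamma_V(B/\pi(S))\ge \gamma_V(B)-\gamma_V(\pi(S))=r(B)-r(\pi(S))$; the matching upper bound requires the general inequality $\gamma_V(X)\le r(X)$ applied to $X=B/\pi(S)$ (established in the first lemma of Section \ref{lwei}), so you should invoke that explicitly rather than attributing the full equality to Lemma \ref{LemAQuot}. With that reference added, every step checks out.
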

\begin{proof}
Take $S_0\in\mathcal{S}(A)$. By the same argument as in the proof of Lemma \ref{taulsc0}, we can suppose that $S_0$ is torsion.

By the first (resp. second) inequality in Lemma \ref{LemAQuot}, $S\mapsto\gamma_V(A/S)$ (resp. $a_V(A/S)$) is maximal at $S_0$, so is upper semi-continuous at $S_0$.
\end{proof}

\subsection{More maps on $\SA$}

Let $A$ be a minimax group, and recall that $\Crit(A)$, respectively $\Ncrit(A)$, denotes the set of critical primes of $A$ (resp. the set of non-critical primes of $A$).
We define three maps $\mathcal{S}(A) \rightarrow \mathbf{N}$
\begin{enumerate}
\item The \emph{level} of a subgroup $S$ is
$$
\lambda_A(S)=a_{\Crit(A)}(A/S)+\ell_{\Crit(A)}(S)+\kappa_{\Crit(A)}(A/S);
$$
\item The \emph{leveled weight} is
$$d_A(S)=\gamma_{\Crit(A)}(A/S)+\ell_{\Ncrit(A)}(S)+\kappa_{\Ncrit(A)}(A/S).$$
\end{enumerate}
Note that $w_A=\lambda_A+d_A$.

\begin{lemma}
Let $A$ be a minimax group. The maps $\lambda_A$ and $d_A$ are constant on each commensurability class in $\mathcal{S}(A)$.
\end{lemma}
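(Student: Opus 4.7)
The plan is to leverage the identity $w_A = \lambda_A + d_A$ together with the preceding lemma, which already establishes that $w_A$ is constant on each commensurability class. It therefore suffices to prove constancy of $\lambda_A$ on commensurability classes, and the corresponding statement for $d_A$ will follow automatically by subtraction. A preliminary reduction: if $S$ and $S'$ are commensurable, then $S \cap S'$ has finite index in both, so it is enough to treat the case of a finite-index inclusion $S' \le S$.

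Under this reduction, I will show that each individual summand appearing in $\lambda_A$ (and in fact also in $d_A$, verifying both directly) is invariant. The main technical input is the short exact sequence
$$0 \longrightarrow S/S' \longrightarrow A/S' \longrightarrow A/S \longrightarrow 0,$$
in which the kernel $S/S'$ is finite. From this, one obtains $r(A/S) = r(A/S')$ by additivity of the rank; applying $\Hom(-,\Z_V)$ and using that $\Z_V$ is torsion-free (hence $\Hom(S/S',\Z_V) = 0$) one gets $\Hom(A/S',\Z_V) \cong \Hom(A/S,\Z_V)$, so $\gamma_V(A/S) = \gamma_V(A/S')$, and consequently $a_V(A/S) = a_V(A/S')$. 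For $\kappa_p(A/S)$, one notes that the finite kernel $S/S'$ is torsion, so the projection descends to an isomorphism $(A/S')/T_{A/S'} \to (A/S)/T_{A/S}$ of torsion-free quotients, yielding $\kappa_p(A/S) = \kappa_p(A/S')$.

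The remaining piece is the invariance of $\ell_V(S)$ under the finite-index inclusion $S' \le S$. I would argue prime by prime, using divisibility of the quasi-cyclic group. A surjection $S \twoheadrightarrow \Cp^k$ restricts to $S' \to \Cp^k$ with image of finite index; since $\Cp^k$ is divisible and hence has no proper finite-index subgroup, the restriction is itself surjective, so $\ell_p(S) \le \ell_p(S')$. Conversely, any surjection $S' \twoheadrightarrow \Cp^k$ extends to $S \to \Cp^k$ because $\Cp^k$ is divisible and thus an injective $\Z$-module; the image of the extension contains $\Cp^k$, so the extension is also surjective, giving $\ell_p(S) \ge \ell_p(S')$. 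Summing over $p \in V$ gives invariance of $\ell_V(S)$, and combining all verifications yields constancy of $\lambda_A$ on commensurability classes.

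The argument is essentially routine; the only two slightly delicate points are the use of injectivity/divisibility of $\Cp^k$ in the $\ell_p$-argument, and the vanishing of $\Hom$ from finite groups into the torsion-free ring $\Z_V$ in the $\gamma_V$-argument. Neither presents a genuine obstacle, and there is no need to invoke the previous lemma on $w_A$ once all summands have been checked individually.
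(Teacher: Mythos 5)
Your proof is correct and follows essentially the same route as the paper: reduce to a finite-index inclusion via $S\cap S'$ and check invariance of each summand separately, with the $\gamma_V$ step being exactly the content of Lemma \ref{LemAQuot} (whose proof is the same left-exact $\Hom(-,\Z_V)$ sequence you write down) and $a_V$ recovered as $r-\gamma_V$. The paper merely dismisses the $\kappa_p$ and $\ell_p$ invariances as clear, whereas you supply the (correct) divisibility/injectivity arguments.
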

\begin{proof}This is clear for the maps $S\mapsto\kappa_p(A/S)$, $S\mapsto\ell_p(S)$. For $S\mapsto\gamma_V(A/S)$, let $S,S'$ be commensurable subgroups of $A$. We can deduce from \ref{LemAQuot} that $\gamma_V(A/S)=\gamma_V(A/(S\cap S'))=\gamma_V(A/S')$. Finally $a_V(A/S)=r(A/S)-\gamma_V(A/S)$ is settled.
\end{proof}

\begin{lemma}\label{mapsusc2}
Let $A$ be a minimax group. The maps $\lambda_A$ and $d_A$ are upper semi-continuous on $\mathcal{S}(A)$.
\end{lemma}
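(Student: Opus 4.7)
The plan is to mimic the derivation in Lemma~\ref{mapsusc}: rewrite each of $\lambda_A$ and $d_A$ as a sum of a constant and functions whose upper or lower semi-continuity is already available from Lemmas~\ref{gammausc} and \ref{taulsc0}. The engine is a per-prime analogue of the identity used there for $w_A$, namely that for every prime $p$ and every $S\le A$,
\[
\ell_p(A) \;=\; \ell_p(S) + \ell_p(A/S).
\]
Combined with $\ell_p(A/S) = \tau_p(A/S) + \kappa_p(A/S)$, this yields the basic identity
\[
\ell_p(S) + \kappa_p(A/S) \;=\; \ell_p(A) - \tau_p(A/S). \qquad (\ast)
\]

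My first step is to establish the per-prime additivity of $\ell_p$. Picking a lattice $Z_S$ of $S$, a finitely generated lift to $A$ of a lattice $Z_{A/S}$ of $A/S$, and letting $Z$ be their sum, one obtains a lattice $Z$ of $A$ together with a short exact sequence $0 \to S/Z_S \to A/Z \to (A/S)/Z_{A/S} \to 0$ of Artinian abelian groups. Since $\ell_p$ coincides with $\tau_p$ on an Artinian group, and $\tau_p$ is the rank of the maximal divisible subgroup, matters reduce to the additivity of divisible rank on a short exact sequence $0 \to S' \to A' \to Q' \to 0$ of Artinian abelian $p$-groups. For such a sequence, the maximal divisible subgroup $D_{A'}$ of $A'$ intersects $S'$ exactly in $D_{S'}$, and the equality $\bigcap_n(p^n A' + S') = D_{A'} + S'$ (valid because $p^nA'$ stabilizes to $D_{A'}$ in the Artinian group $A'$) shows that $D_{A'}$ surjects onto $D_{Q'}$; hence $r(D_{A'}) = r(D_{S'}) + r(D_{Q'})$.

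Summing $(\ast)$ over $p \in \Crit(A)$ and over $p \in \Ncrit(A)$ separately then turns the definitions into
\begin{eqnarray*}
\lambda_A(S) & = & a_{\Crit(A)}(A/S) + \ell_{\Crit(A)}(A) - \tau_{\Crit(A)}(A/S), \\
d_A(S) & = & \gamma_{\Crit(A)}(A/S) + \ell_{\Ncrit(A)}(A) - \tau_{\Ncrit(A)}(A/S).
\end{eqnarray*}
In each expression, the first summand is upper semi-continuous by Lemma~\ref{gammausc}, the middle term is a constant, and the last is the negative of a finite sum of maps $S \mapsto \tau_p(A/S)$, each lower semi-continuous by Lemma~\ref{taulsc0}. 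A finite sum of upper semi-continuous functions being upper semi-continuous, both $\lambda_A$ and $d_A$ are upper semi-continuous. The only non-routine step is the per-prime additivity of $\ell_p$: the argument for $w_A$ in Lemma~\ref{mapsusc} uses only the sum over all primes, but splitting the contributions from $\Crit(A)$ and $\Ncrit(A)$ forces us to isolate each prime.
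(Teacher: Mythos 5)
Your proof is correct and is essentially the paper's own argument: the paper's proof consists precisely of the two displayed identities $\lambda_A(S)=a_{\Crit(A)}(A/S)+\ell_{\Crit(A)}(A)-\tau_{\Crit(A)}(A/S)$ and $d_A(S)=\gamma_{\Crit(A)}(A/S)+\ell_{\Ncrit(A)}(A)-\tau_{\Ncrit(A)}(A/S)$ followed by the citations of Lemmas \ref{gammausc0}, \ref{taulsc0} and \ref{gammausc}, and you merely supply the per-prime additivity of $\ell_p$ that the paper leaves implicit. One small inaccuracy in that supplementary step: $D_{A'}\cap S'$ need not equal $D_{S'}$ (e.g.\ $S'$ the subgroup of order $p$ in $A'=\Cp$), only its maximal divisible subgroup is $D_{S'}$ and the index is finite, which is all the rank count requires.
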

\begin{proof}
Observe that
$$\lambda_A(S)=a_{\Crit(A)}(A/S)+\ell_{\Crit(A)}(A)-\tau_{\Crit(A)}(A/S);$$
$$d_A(S)=\gamma_{\Crit(A)}(A/S)+\ell_{\Ncrit(A)}(A)-\tau_{\Ncrit(A)}(A/S);$$
so they are upper semi-continuous as consequences of Lemmas \ref{gammausc0}, \ref{taulsc0}, and \ref{gammausc}.
\end{proof}

\begin{proposition}\label{LemWscsX0}
In restriction to the open subset $I=\{\lambda_A=0\}$ of $\mathcal{S}(A)
$, the map $w_A\;(=d_A)$ is strictly upper semi-continuous. 
\end{proposition}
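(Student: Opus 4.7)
The plan is to reduce the statement to Corollary \ref{cor:sscidle}, which has already pinned down when $w_A$ is strictly upper semi-continuous at a point, namely exactly at idle subgroups. Two preliminary remarks are needed: (a) that $I$ is genuinely an \emph{open} subset of $\SA$, and (b) that on $I$ one indeed has $w_A=d_A$.

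For (a), observe that all three summands $a_{\Crit(A)}(A/S)$, $\ell_{\Crit(A)}(S)$, $\kappa_{\Crit(A)}(A/S)$ defining $\lambda_A$ are non-negative, so $I=\{\lambda_A=0\}=\{\lambda_A\le 0\}$. Since $\lambda_A$ is upper semi-continuous by Lemma \ref{mapsusc2}, at any $S_0\in I$ there is a neighbourhood $V$ of $S_0$ with $\lambda_A(S)\le\lambda_A(S_0)=0$ for $S\in V$, forcing $\lambda_A\equiv 0$ on $V$. Hence $V\subset I$, and $I$ is open. For (b), the identity $w_A=\lambda_A+d_A$ noted just before Lemma \ref{mapsusc2} gives $w_A=d_A$ throughout $I$.

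Next I would check that every $S\in I$ is idle. Indeed, the defining condition $\ell_{\Crit(A)}(S)=0$ means $\ell_p(S)=0$ for every critical prime $p$, so the clause ``$\ell_p(S)>0$'' in Definition \ref{DefiStrongCritic} fails and $S$ cannot be strongly $p$-critical at any critical prime. By Lemma \ref{Lem parallnotstrong}, $S$ is idle. Applying Corollary \ref{cor:sscidle} pointwise on $I$ then yields that $w_A$ is strictly upper semi-continuous at each $S\in I$, which is exactly the proposition.

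The main ``obstacle'' is essentially cosmetic: one only has to confirm openness of $I$ so that the phrase ``strictly upper semi-continuous on $I$'' makes sense in the strong pointwise form of Definition \ref{def lwsemcont}; once that is observed, the characterization of idleness supplied by Lemma \ref{Lem parallnotstrong} and the dichotomy already established in Proposition \ref{prop:neigh} and Corollary \ref{cor:sscidle} do all the work.
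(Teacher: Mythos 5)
Your proof is correct and follows essentially the same route as the paper's: the paper's own (very terse) argument likewise observes that $\lambda_A(S)=0$ forces $\ell_p(S)=0$ for every critical prime $p$, hence $S$ is not strongly $p$-critical and is idle by Lemma \ref{Lem parallnotstrong}, and then invokes Corollary \ref{cor:sscidle}. Your additional verifications that $I$ is open and that $w_A=d_A$ on $I$ are correct bookkeeping that the paper leaves implicit.
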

\proof
If $\lambda_A(S)=0$, $\ell_p(S)=0$ for every critical prime $p$, then $S$ is not strongly $p$-critical for any $p$. By Lemma \ref{Lem parallnotstrong}, $S$ is idle.
\endproof

\subsection{Hereditary properties (second part)}

If $A$ is a minimax group, in view of Corollary \ref{azero}, the set $I$ of Proposition \ref{PropHeredity0} coincides with $\{S\in\SA|\,\lambda_A(S)=0\}$. Accordingly Proposition \ref{PropHeredity0} states that, on $I$, the function $w_A\,(=d_A)$ is strictly hereditary.


\begin{lemma}
Let $A$ be a minimax group and $S\in\SA$ with $\lambda_A(S)\le 1$. Then $a_{\Crit(A)}(A/S)=\kappa_{\Crit(A)}(A/S)=0$.\label{zeroboth}
\end{lemma}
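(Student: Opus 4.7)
My plan is to derive the lemma directly from Corollary \ref{azero} applied to the quotient $A/S$, by showing that the two quantities $a_{\Crit(A)}(A/S)$ and $\kappa_{\Crit(A)}(A/S)$ vanish simultaneously.

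First, I would note that a quotient of a minimax group is minimax, so Corollary \ref{azero} applies to $A/S$ with the set of primes $V=\Crit(A)$. The corollary asserts
$$a_{\Crit(A)}(A/S)=0\quad\Longleftrightarrow\quad\kappa_p(A/S)=0\text{ for every }p\in\Crit(A).$$
Since $\kappa_{\Crit(A)}(A/S)=\sum_{p\in\Crit(A)}\kappa_p(A/S)$ is a sum of non-negative integers, the right-hand condition is in turn equivalent to $\kappa_{\Crit(A)}(A/S)=0$. Consequently, $a_{\Crit(A)}(A/S)$ and $\kappa_{\Crit(A)}(A/S)$ are either both zero or both positive.

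To finish, I would feed this dichotomy into the definition $\lambda_A(S)=a_{\Crit(A)}(A/S)+\ell_{\Crit(A)}(S)+\kappa_{\Crit(A)}(A/S)$. If $a_{\Crit(A)}(A/S)\ge 1$, then by the previous step $\kappa_{\Crit(A)}(A/S)\ge 1$ as well, forcing $\lambda_A(S)\ge 2$, which contradicts the hypothesis $\lambda_A(S)\le 1$. The symmetric argument rules out $\kappa_{\Crit(A)}(A/S)\ge 1$. Therefore both vanish.

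There is no real obstacle here, since the entire content of the lemma has already been packed into Corollary \ref{azero}; the step that does the work is recognizing that the two integer invariants vanish together, so that a total contribution of $1$ to $\lambda_A$ from the pair $(a_{\Crit(A)},\kappa_{\Crit(A)})$ is impossible.
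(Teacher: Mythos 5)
Your proof is correct and follows exactly the paper's argument: the paper likewise applies Corollary \ref{azero} to the (minimax) quotient $A/S$ to conclude that $a_{\Crit(A)}(A/S)$ and $\kappa_{\Crit(A)}(A/S)$ vanish simultaneously, and then notes that since their sum contributes at most $1$ to $\lambda_A(S)$, both must be zero. You have merely spelled out the dichotomy step that the paper leaves implicit.
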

\proof
We have $\lambda_A(S)=a_{\Crit(A)}(A/S)+\ell_{\Crit(A)}(S)+\kappa_{\Crit(A)}(A/S) \le 1$. By Lemma \ref{azero},
$\kappa_{\Crit(A)}(A/S)=0$ if and only if $a_{\Crit(A)}(A/S)=0$, so they are both zero.
\endproof
Using this, Lemma \ref{LemDIsHered0} can be restated in the following form

\begin{lemma} \label{LemDIsHered}
Let $A$ be a minimax group and $S\le A$. Suppose that $\lambda_A(S)\le 1$ and $d_A(S)\ge 1$. Then there exists
$H$ with commensurable convergence $H\to S$, with $d_A(H)=d_A(S)-1$ and
$\lambda_A(H)=\lambda_A(S)$.
\end{lemma}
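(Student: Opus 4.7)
The plan is to obtain the statement as a direct repackaging of Lemma \ref{LemDIsHered0} in the language of $\lambda_A$ and $d_A$. First, I would combine $\lambda_A(S)\le 1$ with Lemma \ref{zeroboth} to deduce
$$a_{\Crit(A)}(A/S)=\kappa_{\Crit(A)}(A/S)=0,$$
so that $\lambda_A(S)=\ell_{\Crit(A)}(S)$ and, in particular, the first hypothesis $\kappa_{\Crit(A)}(A/S)=0$ of Lemma \ref{LemDIsHered0} is in hand.

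Next, I would verify the remaining hypothesis $r(A/S)+\ell_{\Ncrit(A)}(S)\ge 1$ by unpacking $d_A(S)\ge 1$ as $\gamma_{\Crit(A)}(A/S)+\ell_{\Ncrit(A)}(S)+\kappa_{\Ncrit(A)}(A/S)\ge 1$. If $\ell_{\Ncrit(A)}(S)\ge 1$ this is immediate. Otherwise $\gamma_{\Crit(A)}(A/S)+\kappa_{\Ncrit(A)}(A/S)\ge 1$; since $\gamma_{\Crit(A)}(A/S)\le r(A/S)$ and $\kappa_p(A/S)\le r(A/S)$ for each $p$ (using that $(A/S)/T_{A/S}$ is torsion-free of rank $r(A/S)$), at least one of these summands being positive forces $r(A/S)\ge 1$.

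Lemma \ref{LemDIsHered0} then furnishes $H$ with commensurable convergence $H\to S$, $w_A(H)=w_A(S)-1$, and $\ell_{\Crit(A)}(H)=\ell_{\Crit(A)}(S)$. Because $w_A=\lambda_A+d_A$, the desired $d_A(H)=d_A(S)-1$ will follow automatically once we establish $\lambda_A(H)=\lambda_A(S)$. The $\ell_{\Crit(A)}$ contribution to $\lambda_A$ is already preserved by construction, so it suffices to check that $a_{\Crit(A)}(A/H)=\kappa_{\Crit(A)}(A/H)=0$.

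This last verification is the main, though routine, bookkeeping step, and I would handle it by inspecting the two cases of the construction inside the proof of Lemma \ref{LemDIsHered0}. If $r(A)=0$, then $A/H$ is torsion of rank zero and both quantities vanish trivially. Otherwise $H=S\oplus Q$ with $Q\le A$ torsion-free of rank one and $\ell_{\Crit(A)}(Q)=0$; by Corollary \ref{azero} this gives $a_{\Crit(A)}(Q)=0$, and applying the second inequality of Lemma \ref{LemAQuot} to the short exact sequence $0\to Q\to A/S\to A/H\to 0$ yields
$$a_{\Crit(A)}(A/H)\le a_{\Crit(A)}(A/S)-a_{\Crit(A)}(Q)=0.$$
A final appeal to Corollary \ref{azero} converts this into $\kappa_{\Crit(A)}(A/H)=0$, closing the argument. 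No step should present a real obstacle; everything beyond citing Lemma \ref{LemDIsHered0} is dictionary translation between $(w_A,\ell_{\Crit(A)})$ and $(\lambda_A,d_A)$.
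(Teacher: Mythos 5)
Your proof is correct, but it diverges from the paper's argument at the key step of establishing $\lambda_A(H)=\lambda_A(S)$. The paper treats Lemma \ref{LemDIsHered0} as a black box and argues softly: since $S$ lies in the closure of the commensurability class of $H$ and $\lambda_A$ is both upper semi-continuous (Lemma \ref{mapsusc2}) and constant on commensurability classes, one gets $\lambda_A(H)\le\lambda_A(S)$; the reverse inequality follows from Lemma \ref{zeroboth}, which gives $\lambda_A(S)=\ell_{\Crit(A)}(S)=\ell_{\Crit(A)}(H)\le\lambda_A(H)$, the last step because $\ell_{\Crit(A)}(H)$ is one of the nonnegative summands defining $\lambda_A(H)$. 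You instead reopen the construction inside the proof of Lemma \ref{LemDIsHered0} and verify $a_{\Crit(A)}(A/H)=\kappa_{\Crit(A)}(A/H)=0$ by hand via Lemma \ref{LemAQuot} and Corollary \ref{azero}; this is a valid computation (your exact sequence should strictly feature $H/S\cong Q$, a harmless abuse), but it makes the lemma depend on the internals of an earlier proof rather than on its statement, so the paper's route is more robust and shorter. On the other hand, your explicit verification of the hypotheses of Lemma \ref{LemDIsHered0} --- deducing $\kappa_{\Crit(A)}(A/S)=0$ from Lemma \ref{zeroboth} and $r(A/S)+\ell_{\Ncrit(A)}(S)\ge 1$ from $d_A(S)\ge 1$ via the bounds $\gamma_{\Crit(A)}(A/S)\le r(A/S)$ and $\kappa_p(A/S)\le r(A/S)$ --- is carried out more carefully than in the paper, which leaves this check implicit; that part of your write-up is a genuine improvement in completeness.
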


\proof
By Lemma \ref{LemDIsHered0}, there exists $H$ with commensurable convergence $H\to S$, with $\ell_{\Crit(A)}(H)=\ell_{\Crit(A)}(S)$ and $w_A(H)=w_A(S)-1$. By upper semi-continuity of $\lambda_A$, we have $\lambda_A(H)\le\lambda_A(S)$. By Lemma \ref{zeroboth}, $\lambda_A(S)=\ell_{\Crit(A)}(S)=\ell_{\Crit(A)}(H)\le\lambda_A(H)$. So $\lambda_A(H)=\lambda_A(S)$, hence $d_A(H)=d_A(S)-1$.
\endproof

\begin{lemma} \label{LemLambdaIsHered}
Suppose that $\lambda_A(S)\ge 1$. Then there exists $H$ with
commensurable convergence $H\to S$, with $\lambda_A(H)=\lambda_A(S)-1$
and $d_A(H)=d_A(S)$.
\end{lemma}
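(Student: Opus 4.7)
The hypothesis $\lambda_A(S)\ge 1$, combined with Corollary~\ref{azero} (which makes the vanishing of $a_{\Crit(A)}(A/S)$ equivalent to that of $\kappa_{\Crit(A)}(A/S)$), splits into two sub-cases that I plan to handle separately: Case~A, where some critical prime $p$ satisfies $\ell_p(S)\ge 1$; or Case~B, where $\ell_{\Crit(A)}(S)=0$ and $\kappa_p(A/S)\ge 1$ for some critical prime $p$. Throughout I will track the parameter changes via the alternative expressions $\lambda_A=a_{\Crit(A)}(A/\cdot)+\ell_{\Crit(A)}(A)-\tau_{\Crit(A)}(A/\cdot)$ and $d_A=\gamma_{\Crit(A)}(A/\cdot)+\ell_{\Ncrit(A)}(A)-\tau_{\Ncrit(A)}(A/\cdot)$ from the proof of Lemma~\ref{mapsusc2}, so what I really need is $w_A(H)=w_A(S)-1$ together with the two equalities $\tau_{\Ncrit(A)}(A/H)=\tau_{\Ncrit(A)}(A/S)$ and $\gamma_{\Crit(A)}(A/H)=\gamma_{\Crit(A)}(A/S)$.

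For Case~A I will model the construction on the end of the proof of Lemma~\ref{LemDIsHered0}: reduce modulo a lattice $Z\le S$ to assume $S$ is torsion (hence Artinian), use $\ell_p(S)\ge 1$ to split off a Pr\"ufer summand $P\cong\Cp$ of $\Div(S)$ so that $S=P\oplus H$, and approach $S$ by the sequence $H_k:=H\oplus P[p^k]$, which is commensurable to $H$. Because $S/H=P$ is torsion, Lemma~\ref{LemAQuot} and the identification $(A/H)/T_{A/H}\cong(A/S)/T_{A/S}$ will force $r(A/H)=r(A/S)$, every $\gamma_V(A/H)=\gamma_V(A/S)$, and every $\kappa_q(A/H)=\kappa_q(A/S)$, while only $\ell_p(H)=\ell_p(S)-1$ changes; the closed-form expressions then deliver $\lambda_A(H)=\lambda_A(S)-1$ and $d_A(H)=d_A(S)$.

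For Case~B I will carry out the dual construction of adjoining to $S$ a torsion-free $\Z[1/p]$-subgroup of $A$. Fix a decomposition $A=T_A\oplus M$ with $M$ torsion-free (such a decomposition exists for any minimax $A$). Let $K\le M$ be the kernel of the induced map $\varphi:M\to N:=(A/S)/T_{A/S}$; then $K$ is pure in $M$ (because $N$ is torsion-free), and additivity of $\ell_p$ on the resulting torsion-free minimax short exact sequence $0\to K\to M\to N\to 0$ gives $\ell_p(K)=\ell_p(M)-\ell_p(N)=\ell_p(M)-\kappa_p(A/S)<\ell_p(M)$ by the case hypothesis. Consequently $K_{p\mathrm{-div}}=M_{p\mathrm{-div}}\cap K$ is a proper $\Z[1/p]$-direct summand of the free $\Z[1/p]$-module $M_{p\mathrm{-div}}=\bigcap_{n\ge 0}p^nM$, so I can pick a rank-one $\Z[1/p]$-direct summand $Q$ of $M_{p\mathrm{-div}}$ inside a $\Z[1/p]$-complement of $K_{p\mathrm{-div}}$. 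The assumption $\ell_{\Crit(A)}(S)=0$ implies $S\cap M\le K$, hence $Q\cap S\le Q\cap K=0$, and the image $\bar Q\le N$ will be a rank-one $\Z[1/p]$-direct summand of $N_{p\mathrm{-div}}$. Set $H:=S\oplus Q$; commensurable convergence $H\to S$ will follow from Proposition~\ref{PropComConv} (for any lattice $Z\le S$, $H/Z=S/Z\oplus Q$ and $T_{H/Z}=S/Z$), made explicit by the approximating sequence $H_k:=S+(q_1\cdots q_k)Q$ where $(q_i)$ enumerates the primes different from $p$, so that $(q_1\cdots q_k)Q$ has finite index in $Q$ and these finite-index subgroups intersect in $0$.

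The parameter verification in Case~B is the step I expect to require the most care. Because $\bar Q$ is a $\Z[1/p]$-direct summand of the pure subgroup $N_{p\mathrm{-div}}\le N$, the quotient $(A/S)/\bar Q$ acquires no new torsion, so $\tau_q(A/H)=\tau_q(A/S)$ for every prime $q$; rank drops by one, $r(A/H)=r(A/S)-1$, and since $\gamma_V(\Z[1/p])=0$ whenever $p\in V$, the first inequality in Lemma~\ref{LemAQuot} forces $\gamma_{\Crit(A)}(A/H)=\gamma_{\Crit(A)}(A/S)$; combined with $\ell_p(H)=\ell_p(S)+1$ and $\ell_q(H)=\ell_q(S)$ for $q\ne p$, the closed-form expressions then yield $\lambda_A(H)=\lambda_A(S)-1$ and $d_A(H)=d_A(S)$. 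The main obstacle will be the structural package about torsion-free minimax groups underlying Case~B---the splitting $A=T_A\oplus M$, the purity and free-$\Z[1/p]$-module structure of $M_{p\mathrm{-div}}$, the additivity of $\ell_p$ on torsion-free minimax extensions, and the consequent identification $\varphi(M_{p\mathrm{-div}})=N_{p\mathrm{-div}}$---which is what legitimizes the clean choice of $Q$ and the ``no new torsion'' statement on which the whole computation hinges.
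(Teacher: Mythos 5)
Your overall strategy --- split into a ``remove a critical Pr\"ufer summand'' case and an ``adjoin a torsion-free rank-one subgroup'' case, and track the invariants through the closed forms $\lambda_A=a_{\Crit(A)}(A/\cdot)+\ell_{\Crit(A)}(A)-\tau_{\Crit(A)}(A/\cdot)$ and $d_A=\gamma_{\Crit(A)}(A/\cdot)+\ell_{\Ncrit(A)}(A)-\tau_{\Ncrit(A)}(A/\cdot)$ --- is the paper's, and your Case~A is correct as written. The gap is in Case~B, precisely in the structural package you flag at the end. For a torsion-free minimax group $M$, the subgroup $M_{p\mathrm{-div}}=\bigcap_n p^nM$ is \emph{not} in general a $\Z[1/p]$-module of rank $\ell_p(M)$, so $\ell_p(K)<\ell_p(M)$ does \emph{not} force $K_{p\mathrm{-div}}\subsetneq M_{p\mathrm{-div}}$, and the summand $Q$ you need may simply not exist. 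Concretely, fix $\alpha\in\Z_p\setminus\Q$ and let $M=\{(a,b)\in\Z[1/p]^2:\ a\alpha-b\in\Z_p\}$. Then $\Z^2\le M$ and $M/\Z^2\cong\Cp$, so $M$ is torsion-free minimax of rank $2$ with $\ell_p(M)=1$; but $(a,b)\in\bigcap_np^nM$ forces $a\alpha-b\in\bigcap_np^n\Z_p=0$, hence $a=b=0$ because $\alpha\notin\Q$, so $M_{p\mathrm{-div}}=0$. Taking $A=(\Cp)^2\oplus M$ and $S=0$ puts you squarely in your Case~B ($p$ is critical, $\ell_{\Crit(A)}(S)=0$, $\kappa_p(A/S)=1$) with no $Q$ available; note also that any rank-one $H$ you could adjoin here necessarily has $\ell_p(H)=0$, so the mechanism ``$\ell_p$ goes up by one, $\kappa_p$ goes down by one'' cannot be realized at all in this example.

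The paper avoids this by never asking for a $p$-divisible direction. After reducing to $S$ torsion (which it does in \emph{both} cases), it observes that in your Case~B Corollary~\ref{azero} gives $a_{\Crit(A)}(A)\ge 1$, i.e.\ $U_{\Crit(A)}(A)$ is non-torsion; it then picks an infinite cyclic $Z$ inside $U_{\Crit(A)}(Q)$ for a torsion-free complement $Q$ of $T_A$, lets $L$ be the isolator of $Z$ in $Q$ (so $Q/L$ is torsion-free and no new torsion is created), and sets $H=S\oplus L$. Since $Z\le U_{\Crit(A)}(A)$ one has $\Hom(A/Z,\Z_{\Crit(A)})=\Hom(A,\Z_{\Crit(A)})$, whence $\gamma_{\Crit(A)}(A/H)=\gamma_{\Crit(A)}(A)$ while $r(A/H)=r(A/S)-1$; the decrease of $\lambda_A$ thus comes from $a_{\Crit(A)}(A/\cdot)$ dropping by one, with $\ell_{\Crit(A)}$ and $\kappa_{\Crit(A)}$ possibly unchanged (in the example above $\ell_p(L)=0$). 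Your bookkeeping in Case~B is internally consistent, but it rests on the existence of a $\Z[1/p]$-line in $A$, which is a strictly stronger (and sometimes unavailable) input than membership in $U_{\Crit(A)}(A)$.
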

\proof
We can suppose that $S$ is torsion (argue as in the beginning of the proof of Lemma \ref{LemDIsHered0}). We easily check from the definition that $\kappa_p(A/S)=\kappa_p(A)$ for any prime $p$. By
Lemma \ref{LemAQuot}, we also have $a_{\Crit(A)}(A/S)=a_{\Crit(A)}(A)$. Therefore
$\lambda_A(S)=a_{\Crit(A)}(A)+\ell_{\Crit(A)}(S)+\kappa_{\Crit(A)}(A)$.

Suppose that $a_{\Crit(A)}(A)\ge 1$, i.e. that $U_{\Crit(A)}$ is not torsion. Write $A=T_A\oplus Q$ with $Q$
torsion-free. We have $U_{\Crit(A)}(A)=U_{\Crit(A)}(T_A)\oplus U_{\Crit(A)}(Q)$, so $U_{\Crit(A)}(Q)$ is not torsion. 
Let $Z$ be an infinite cyclic subgroup of $U_{\Crit(A)}(Q)$, and
let $L$ be the inverse image in $Q$ of the torsion subgroup of $Q/Z$. Finally
set $H=S\oplus L$. We have (note that $\gamma_{\Crit(A)}(A/S)=\gamma_{\Crit(A)}(A)$ by Lemma \ref{LemAQuot}):
$$d_A(H)-d_A(S)=\gamma_{\Crit(A)}(A/H)-\gamma_{\Crit(A)}(A)-\tau_{\Ncrit(A)}(A/H)+\tau_{\Ncrit(A)}(A/S).$$

We claim that:
\begin{itemize}
\item[$(1)$] $\tau_p(A/H)=\tau_p(A/S)$ for every prime $p$;
\item[$(2)$] $\gamma_{\Crit(A)}(A/H)=\gamma_{\Crit(A)}(A)$ and $a_{\Crit(A)}(A/H)=a_{\Crit(A)}(A)-1$.
\end{itemize}

\emph{Proof of Claim $(1)$.} As $Q/L$ is torsion-free the groups $T_{A/H}$ and $T_{A/S}$ are both isomorphic to $T_A/S$. Hence $\tau_p(A/H)=\tau_p(A/S)$ for every $p$.

\emph{Proof of Claim $(2)$} By Lemma \ref{LemAQuot}, we have
$\gamma_{\Crit(A)}(A/H)=\gamma_{\Crit(A)}(A/Z)$ and $a_{\Crit(A)}(A/H)=a_{\Crit(A)}(A/Z)$. 
Since $Z \le U_{\Crit(A)}(A)$, we have $\Hom(A/Z,Z_{\Crit(A)})=\Hom(A,Z_{\Crit(A)})$ and hence $\gamma_{\Crit(A)}(A/Z)=\gamma_{\Crit(A)}(A)$, or equivalently $a_{\Crit(A)}(A/Z)=a_{\Crit(A)}(A)-r(Z)=a_{\Crit(A)}(A)-1$, which completes the proof
of claim $(2)$.

We deduce from the previous claims that $d_A(H)=d_A(S)$ and\\
$\lambda_A(H)=\lambda_A(S)-1$. Moreover, $S$ is a strict limit of
subgroups of finite index in $H$.

Finally, suppose that $a_{\Crit(A)}(A)=0$ (which implies $\kappa_{\Crit(A)}(A)=0$ and
$\tau_{\Crit(A)}(S)\ge 1$). Write $S=P\oplus L$, where $P$ is isomorphic to
$\Cp$ for some critical prime $p$. Then $d_A(S)=d_A(H)$ and
$\lambda_A(H)=\lambda_A(S)-1$. Moreover, $S$ is a strict limit of
subgroups containing $H$ as a subgroup of finite index.
\endproof

\begin{lemma} \label{lambda1}
Let $A$ be a minimax group and $S$ a subgroup, and $m\le\lambda_A(S)$. Then there exists $H$ with
commensurable convergence $H\to S$, with $\lambda_A(H)=m$
and $d_A(H)=d_A(S)$.
\end{lemma}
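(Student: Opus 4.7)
The plan is to prove this by induction on the non-negative integer $k = \lambda_A(S) - m$, iterating Lemma \ref{LemLambdaIsHered} which handles exactly the case $k = 1$.

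For the base case $k = 0$ (i.e.\ $m = \lambda_A(S)$), one simply takes $H = S$: commensurable convergence is trivially satisfied since $S$ lies in the closure of its own commensurability class, and both invariants are equal to those of $S$ by construction.

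For the inductive step, assume the statement holds whenever $\lambda_A(S') - m' < k$, and suppose $\lambda_A(S) - m = k \ge 1$, so in particular $\lambda_A(S) \ge 1$. First I would apply Lemma \ref{LemLambdaIsHered} to produce a subgroup $L$ with commensurable convergence $L \to S$, $\lambda_A(L) = \lambda_A(S) - 1$, and $d_A(L) = d_A(S)$. Now $m \le \lambda_A(L)$ and $\lambda_A(L) - m = k - 1$, so the induction hypothesis applied to $L$ yields $H$ with commensurable convergence $H \to L$, $\lambda_A(H) = m$, and $d_A(H) = d_A(L)$. Combining, $d_A(H) = d_A(L) = d_A(S)$, and the transitivity of commensurable convergence (Corollary \ref{cctrans}) gives commensurable convergence $H \to S$, completing the induction.

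There is essentially no obstacle here: the work has already been done in establishing Lemma \ref{LemLambdaIsHered} (the one-step descent producing $H$ with $\lambda_A(H) = \lambda_A(S) - 1$ while keeping $d_A$ constant) and in proving Corollary \ref{cctrans} (transitivity of commensurable convergence, which is exactly what allows the iteration to close up). Without transitivity, iterating Lemma \ref{LemLambdaIsHered} would only produce a chain of commensurable convergences rather than a single one from $H$ to $S$, so invoking Corollary \ref{cctrans} is the crucial (but painless) ingredient.
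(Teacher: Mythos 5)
Your proof is correct and is exactly the argument the paper intends: an induction on $\lambda_A(S)-m$ built on the one-step descent of Lemma \ref{LemLambdaIsHered} and closed up via the transitivity of commensurable convergence (Corollary \ref{cctrans}). The paper merely states this in one sentence; you have filled in the same details.
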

\proof This is a straightforward induction on $\lambda_A(S)-m$, based on Lemma \ref{LemLambdaIsHered} and making use of transitivity of commensurable convergence (Corollary \ref{cctrans}).
\endproof

\begin{proposition} \label{PropHereditybis} \noindent
Let $A$ be a minimax group. Again, set $I=\{S\in\SA|\lambda_A(S)=0\}$. Then the map $d_A$ is $I$-hereditary on $\SA$.
\end{proposition}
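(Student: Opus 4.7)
The plan is to reduce the statement to a single application of Lemma \ref{lambda1}, using the fact that both $\lambda_A$ and $d_A$ have already been shown to be constant on commensurability classes. What needs to be verified is the following: given $S \in \SA$ and a neighbourhood $V$ of $S$, we need a subgroup $H \in V \cap I$ with $d_A(H) \ge d_A(S)$.

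First I would apply Lemma \ref{lambda1} with the target value $m = 0 \le \lambda_A(S)$. This yields a subgroup $H_0 \in \SA$ with commensurable convergence $H_0 \to S$, $\lambda_A(H_0) = 0$ (so that $H_0 \in I$), and $d_A(H_0) = d_A(S)$. At this point $H_0$ itself need not lie in $V$; commensurable convergence only asserts that $S$ lies in the closure of the commensurability class $[H_0]$.

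To convert this into the hereditary statement, I would unfold the definition: since $S$ is in the closure of $[H_0]$, there is a net (in fact, since $A$ is countable and $\SA$ is metrizable, a sequence) $(H_\alpha)$ of subgroups commensurable to $H_0$ with $H_\alpha \to S$. By the lemma asserting that $\lambda_A$ and $d_A$ are constant on commensurability classes, each $H_\alpha$ satisfies $\lambda_A(H_\alpha) = 0$ and $d_A(H_\alpha) = d_A(S)$. Thus every $H_\alpha$ belongs to $I$, and eventually $H_\alpha \in V$. Any such $H_\alpha$ witnesses $\sup_{H \in V \cap I} d_A(H) \ge d_A(S)$, which is exactly the $I$-heredity of $d_A$ at $S$.

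There is no serious obstacle here: the proof is essentially a bookkeeping exercise that combines Lemma \ref{lambda1} with commensurability-invariance. All of the actual work — the construction of subgroups that lower $\lambda_A$ while preserving $d_A$ via commensurable convergence — was already carried out in Lemma \ref{LemLambdaIsHered} and Corollary \ref{cctrans}, on which Lemma \ref{lambda1} rests.
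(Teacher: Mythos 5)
Your proof is correct and follows essentially the same route as the paper: the paper also applies Lemma \ref{lambda1} with $m=0$ and immediately concludes that some $H\in I\cap V$ has $d_A(H)=d_A(S)$, leaving implicit the unfolding of commensurable convergence and the constancy of $\lambda_A$ and $d_A$ on commensurability classes that you spell out. Your version merely makes that bookkeeping explicit, which is harmless and arguably clearer.
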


\begin{proof}
First, the statement presupposes that $I$ is dense in $\SA$, which is a consequence of Proposition \ref{extreme_com_class} and Corollary \ref{cor:existisolated} (but also follows from the argument below).

Pick $S \le A$. Let $V$ be an open neighbourhood of $S$.
Using Lemma \ref{lambda1} with $m=0$, we obtain that
there exists $H \in I\cap V$ such that $d_A(H)=d_A(S)$. Hence $d_A$ is $I$-hereditary at $S$.
\end{proof}

\begin{lemma} \label{lambda1more}
Let $A$ be a minimax group and $S$ a subgroup of $A$ with $\lambda_A(S)\ge 1$, and $1\le n\le d_A(S)$. Then there exists $H$ with
commensurable convergence $H\to S$, with $\lambda_A(H)=1$
and $d_A(H)=n$.
\end{lemma}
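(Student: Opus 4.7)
The plan is to build $H$ from $S$ in two stages by composing the two ``one-step'' moves available from the preceding lemmas, then close up using transitivity of commensurable convergence.

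First I would apply Lemma \ref{lambda1} with $m=1$ (which is legitimate since $1\le\lambda_A(S)$) to produce $H_0\in\SA$ with commensurable convergence $H_0\to S$, $\lambda_A(H_0)=1$ and $d_A(H_0)=d_A(S)$. This step converts the problem of reaching $(\lambda_A,d_A)=(1,n)$ into the easier problem of reducing $d_A$ from $d_A(S)$ down to $n$ while keeping $\lambda_A=1$.

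Next, set $k=d_A(S)-n\ge 0$ and construct a finite chain $H_0,H_1,\dots,H_k$ by repeatedly invoking Lemma \ref{LemDIsHered}. At stage $i<k$, we have $\lambda_A(H_i)=1\le 1$ and $d_A(H_i)=d_A(S)-i\ge n\ge 1$, so the hypotheses of Lemma \ref{LemDIsHered} are met and the lemma yields $H_{i+1}$ with commensurable convergence $H_{i+1}\to H_i$, $\lambda_A(H_{i+1})=1$, and $d_A(H_{i+1})=d_A(S)-(i+1)$. Setting $H:=H_k$ gives $\lambda_A(H)=1$ and $d_A(H)=n$ as required.

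Finally, applying Corollary \ref{cctrans} ($k$ times, or once after observing the chain is finite) to the sequence of commensurable convergences $H=H_k\to H_{k-1}\to\dots\to H_0\to S$ yields commensurable convergence $H\to S$, completing the proof. There is essentially no obstacle here: the two heredity lemmas have already been engineered to preserve the complementary invariant, so the whole argument is just a clean iteration, and the only thing to check is that the hypothesis $d_A(H_i)\ge 1$ of Lemma \ref{LemDIsHered} never fails along the way, which is guaranteed by the assumption $n\ge 1$.
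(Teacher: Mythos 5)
Your proposal is correct and follows the paper's own proof essentially verbatim: first reduce $\lambda_A$ to $1$ via Lemma \ref{lambda1} with $m=1$, then decrement $d_A$ step by step via Lemma \ref{LemDIsHered}, and conclude by transitivity of commensurable convergence (Corollary \ref{cctrans}). The verification that the hypotheses of Lemma \ref{LemDIsHered} hold at each stage is a welcome (and correct) addition of detail that the paper leaves implicit.
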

\proof By Lemma \ref{lambda1}, there exists $H\in\SA$ with commensurable convergence $H\to S$, and $\lambda_A(H)=1$, $d_A(H)=d_A(S)$. Applying several times Lemma \ref{LemDIsHered} (using Corollary \ref{cctrans}), we find $H$ with commensurable convergence $H\to S$, $d_A(H)=n$ and $\lambda_A(H)=1$. Again with Corollary \ref{cctrans}, we have commensurable convergence $H\to S$.
\endproof

The following lemma gives somehow the ``smallest'' examples of
minimax groups whose space of subgroups is not scattered
(equivalently uncountable).
\begin{lemma} \label{LemCp2}
Let $A=(\Cp)^2$. The set $\mathcal{K}$ of subgroups of $A$ isomorphic to $\Cp$ is
homeomorphic to the Cantor set. In particular
$\mathcal{S}(A)$ is uncountable.
\end{lemma}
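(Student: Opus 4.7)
The plan is to show that $\mathcal{K}$ is a non-empty compact, metrizable, Hausdorff, totally disconnected, and perfect topological space, and then apply the topological characterization of the Cantor set recalled in Paragraph \ref{cha}. Since $\SA$ already enjoys all the listed properties except perfectness, the plan reduces to two points: (i) $\mathcal{K}$ is closed in $\SA$, and (ii) $\mathcal{K}$ has no isolated point. Throughout, I use the notation $A[p^n]:=\{a\in A\mid p^na=0\}$, a finite group of order $p^{2n}$.

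For (i), the key observation is the following intrinsic characterization: an infinite subgroup $H\le A$ lies in $\mathcal{K}$ if and only if $|H\cap A[p]|=p$. Indeed, any infinite subgroup of the Artinian group $A$ has non-trivial divisible part $\Div(H)$; writing $H=\Div(H)\oplus F$ with $F$ finite, the condition $|H\cap A[p]|=p$ forces $\Div(H)\cong\Cp$ and then $F=0$. Now let $(H_n)$ be a sequence in $\mathcal{K}$ converging to $H\in\SA$. The condition $|\,\cdot\,\cap A[p]|=p$ is clopen in $\SA$, hence passes to the limit; and for each fixed $m\ge 1$ the sequence $H_n\cap A[p^m]$ is drawn from the finite lattice of subgroups of $A[p^m]$, so is eventually constant, equal to some subgroup $F_m$ of order $p^m$ necessarily contained in $H$. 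Thus $H$ is infinite, and the above characterization gives $H\in\mathcal{K}$.

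For (ii), fix $H\in\mathcal{K}$ and a compatible system of generators $(y_n)_{n\ge 1}$ with $y_n\in H$ of order $p^n$ and $py_{n+1}=y_n$. For each $m\ge 1$ I would construct $H_m\in\mathcal{K}$ with $H_m\cap A[p^m]=\langle y_m\rangle=H\cap A[p^m]$ but $H_m\ne H$. Setting $z_m:=y_m$ and choosing inductively $z_{k+1}\in A[p^{k+1}]$ with $pz_{k+1}=z_k$ produces a $\Cp$-subgroup $H_m:=\bigcup_{k\ge m}\langle z_k\rangle$; at the initial step $k=m$ I would pick $z_{m+1}$ so that $\langle z_{m+1}\rangle\ne\langle y_{m+1}\rangle$, which is possible because a short counting argument (based on the fact that $p\colon A[p^{k+1}]\to A[p^k]$ is surjective with fibres of size $p^2$, and a cyclic subgroup of order $p^{k+1}$ has $p$ generators lying above a given generator of its index-$p$ subgroup) shows that there are exactly $p\ge 2$ cyclic subgroups of $A[p^{k+1}]$ of order $p^{k+1}$ containing a given cyclic subgroup of order $p^k$. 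Convergence $H_m\to H$ then follows: for any $x\in H$ of order $p^n$, one has $x\in\langle y_n\rangle\subset H_m$ for all $m\ge n$, and for any $x\in A\setminus H$ of order $p^n$, one has $x\notin\langle y_n\rangle=H_m\cap A[p^n]$ for all $m\ge n$.

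The final statement of the lemma is then immediate since the Cantor set has cardinality $2^{\aleph_0}$. The main obstacle is step (ii): one must check that the mere local divergence $\langle z_{m+1}\rangle\ne\langle y_{m+1}\rangle$ at level $m+1$ can be propagated to an entire compatible generating system of a genuine $\Cp$-subgroup, which is precisely guaranteed by the unobstructed nature of the tower $\cdots\to A[p^{k+1}]\to A[p^k]\to\cdots$ and by the fact that the ``choice set'' at each step has size $p\ge 2$.
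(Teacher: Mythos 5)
Your proof is correct, but it takes a genuinely different route from the paper's. The paper identifies $\mathcal{K}$ with the orbit of $\Cp\oplus\{0\}$ under $\Aut(A)=\GL_2(\Z_p)$ acting continuously on $\SA$, computes the stabilizer to be the upper triangular matrices, and concludes that $\mathcal{K}\cong\mathbf{P}^1(\Q_p)$, invoking the known fact that the $p$-adic projective line is a Cantor set. You instead verify Brouwer's characterization directly inside $\SA$: closedness via the clopen condition $|S\cap A[p]|=p$ together with the eventual stabilization of $S\cap A[p^m]$ in the finite lattice of subgroups of $A[p^m]$, and perfectness by perturbing the tower $(\langle y_k\rangle)_k$ above an arbitrarily high level $m$, using that exactly $p\ge 2$ cyclic subgroups of order $p^{k+1}$ lie above a given one of order $p^k$. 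Both arguments are sound; the paper's is shorter but leans on the identification of $\Aut((\Cp)^2)$ with $\GL_2(\Z_p)$ and on the homeomorphism type of $\mathbf{P}^1(\Q_p)$ as known inputs, whereas yours is self-contained and elementary, needing only the Chabauty topology and the subgroup structure of $(\Z/p^k\Z)^2$ (in effect you exhibit $\mathcal{K}$ as the boundary of a rooted tree with constant branching $p$). Your intrinsic characterization of $\mathcal{K}$ as the infinite subgroups $H$ with $|H\cap A[p]|=p$, and the counting of lifts, are both verified correctly; the only point worth making explicit is that $F_m\le H$ because the limit of a Chabauty-convergent sequence contains every element eventually belonging to all terms, which you implicitly use.
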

\proof By direct computation $\Aut(A)=\GL_2(\Z_p)$, the group of $2
\times 2$ matrices with coefficients in the $p$-adics, acting on $(\Cp)^2$ through its identification with $(\Q_p/\Z_p)^2$. The action on
the set of subgroups is easily checked to be continuous. The
stabilizer of the ``line" $\Cp\oplus\{0\}$ is the set of upper
triangular matrices $T_2(\Z_p)$. The quotient can be identified on
the one hand with the projective line $\mathbf{P}^1(\Q_p)$, which is
known to be homeomorphic to the Cantor space, and on the other hand
with the orbit of $\Cp\oplus\{0\}$.

Now let $P\in \mathcal{K}$. Being
divisible, it has a direct complement $Q$ in $A$. Necessarily, $Q$ is
isomorphic to $\Cp$. Therefore there exists an automorphism of $A$
mapping $\Cp\oplus\{0\}$ to $P$. Thus $\mathcal{K}$ coincides with the 
orbit of $\Cp \oplus \{0\}$, which completes the proof.
\endproof

\begin{lemma} \label{LemCritCantor}
Suppose that $S$ is strongly $p$-critical. Then there exists a
Cantor set $\mathcal{K}\subset\SA$ such that $S\in\mathcal{K}$, and
every $K\in\mathcal{K}-\{S\}$ is parallel and non-commensurable to
$S$.\end{lemma}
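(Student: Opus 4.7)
The strategy is to reduce to an Artinian divisible setting for $S$ and then transplant a Cantor family of $\Cp$-subgroups of $(\Cp)^2$, produced by Lemma \ref{LemCp2}, into $\SA$.

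First I will reduce to the case where $S$ is divisible and Artinian. Pick a lattice $Z$ of $S$ and a finite-index subgroup $F_0$ of $S$ containing $Z$ with $S/F_0$ divisible. Subgroups of $A$ containing $F_0$ correspond bijectively and homeomorphically to subgroups of $A/F_0$, and the image $\bar S := S/F_0$ is a divisible Artinian subgroup with $\ell_p(\bar S) \geq 1$ and $\tau_p((A/F_0)/\bar S) \geq 1$. Any Cantor set in $\mathcal{S}(A/F_0)$ containing $\bar S$ with all other members parallel and non-commensurable to $\bar S$ pulls back to a Cantor set in $\SA$ with the analogous properties at $S$, because all lifted subgroups contain $F_0$ and hence share the lattice $Z$ with $S$, and the $\ell_q$-invariants are unchanged upon quotienting by the finite group $F_0/Z$. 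So I assume henceforth that $S$ is divisible Artinian.

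Using $\ell_p(S) \geq 1$ and divisibility, I split $S = P \oplus S'$ with $P \cong \Cp$. Using $\tau_p(A/S) \geq 1$, I lift a $\Cp$-subgroup of $A/S$ to a subgroup $Q$ of $A$ with $S \leq Q$ and $Q/S \cong \Cp$. Since $S$ and $Q/S$ are both divisible, so is $Q$, and $S$ has a direct complement $P_0 \cong \Cp$ inside $Q$. Setting $H := P \oplus P_0 \cong (\Cp)^2$, Lemma \ref{LemCp2} supplies a Cantor set $\mathcal{K}_0$ of $\Cp$-subgroups of $H$ containing $P$. I then define
$$\mathcal{K} := \{K \oplus S' : K \in \mathcal{K}_0\} \subset \SA,$$
which contains $S = P \oplus S'$, where the sum $K + S'$ is indeed direct because $H \cap S' \leq H \cap S = P$ while $P \cap S' = 0$.

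Finally I will show that $K \mapsto K \oplus S'$ is a homeomorphism of $\mathcal{K}_0$ onto $\mathcal{K}$ and that each $K \oplus S'$ with $K \neq P$ is parallel but non-commensurable to $S$. Parallelism is immediate since $K \oplus S'$ is divisible Artinian and isomorphic to $S$; non-commensurability amounts to $K \oplus S' \neq S$, which holds iff $K \neq P$. The main technical point---and the part I expect to require the most care, since Chabauty continuity of subgroup addition is not automatic---is continuity of $K \mapsto K \oplus S'$. I handle it by using the decomposition $A = H \oplus S' \oplus A'$ obtained from the divisibility of $Q$: for $y = y_H + y_{S'} + y_{A'}$, one has $y \in K \oplus S'$ iff $y_{A'} = 0$ and $y_H \in K$, which yields both injectivity and Chabauty continuity at once. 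As a continuous injection from a compact Hausdorff space into a Hausdorff space, $K \mapsto K \oplus S'$ is then a homeomorphism onto its image, so $\mathcal{K}$ is a Cantor set.
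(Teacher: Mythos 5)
Your proof is correct and rests on the same engine as the paper's, namely Lemma \ref{LemCp2} applied to a copy of $(\Cp)^2$ assembled from a $\Cp$ inside $S$ and a $\Cp$ sitting over $T_{A/S}$; the two arguments differ only in how they reduce to that situation. The paper mods out by the kernel $W$ of a surjection $S\to\Cp$, so that $S$ itself becomes isomorphic to $\Cp$ and the Cantor set is literally the one produced by Lemma \ref{LemCp2}; since $W$ need not be finitely generated, this identifies $\mathcal{S}(A/W)$ with a closed (rather than open) subspace of $\SA$, and the transfer of parallelism and non-commensurability back to $A$ is left implicit. You instead quotient only by a finitely generated subgroup, keep the complement $S'$ of a $\Cp$-summand of $S$, and form $K\oplus S'$; this buys a cleaner reduction (an open subspace of $\SA$) at the price of having to check that $K\mapsto K\oplus S'$ is a homeomorphism onto its image, which your decomposition $A=H\oplus S'\oplus A'$ handles correctly. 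Two small points to repair: $F_0$ is not a finite-index subgroup of $S$ (that would force $S/F_0=0$ once $S/F_0$ is divisible) but a finitely generated one, namely a subgroup containing the lattice $Z$ with $F_0/Z$ finite; and the assertion that non-commensurability amounts to $K\oplus S'\neq S$ deserves the one-line computation $(K\oplus S')\cap(P\oplus S')=(K\cap P)\oplus S'$ with $K\cap P$ finite for $K\neq P$, so that the intersection has infinite index in $K\oplus S'$.
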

\proof Let $W$ be the kernel of a map of $S$ onto $\Cp$. Working
inside $A/W$, we can suppose that $W=0$, i.e. $S$ is isomorphic to
$\Cp$. As $S$ is strongly $p$-critical, there exists another
subgroup $P$ which is isomorphic to $\Cp$ and such that $P\cap S=0$.
Then the set of subgroups of $P \oplus S$ that are isomorphic to
$\Cp$ is homeomorphic to a Cantor set, by Lemma \ref{LemCp2}. They
are all parallel to $S$ and pairwise non-commensurable.
\endproof

\begin{lemma} \label{LemLambda1}
Suppose that $\lambda_A(S)=1$. Then $S$ belongs to a Cantor set whose
points are subgroups $H$ with $d_A(S)=d_A(H)$ and $\lambda_A(H)=1$.
\end{lemma}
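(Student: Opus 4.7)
The plan is three steps: first unpack the hypothesis $\lambda_A(S) = 1$, then verify that $S$ is strongly $p$-critical for some critical prime $p$, and finally invoke Lemma \ref{LemCritCantor} and check that the resulting Cantor set lies in the level set $\{\lambda_A = 1\} \cap \{d_A = d_A(S)\}$. Step one is immediate: Lemma \ref{zeroboth} applied to $\lambda_A(S) \le 1$ forces $a_{\Crit(A)}(A/S) = \kappa_{\Crit(A)}(A/S) = 0$, so $\ell_{\Crit(A)}(S) = 1$; fix the unique critical prime $p$ with $\ell_p(S) = 1$.

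For step two, I would establish $\tau_p(A/S) \ge 1$ via the subadditivity $\ell_p(A) \le \ell_p(S) + \ell_p(A/S)$. Given any surjection $\phi : A \twoheadrightarrow \Cp^{\ell_p(A)}$, decompose the image as $\phi(S) = \Div(\phi(S)) \oplus F$ with $F$ finite and $\Div(\phi(S)) \cong \Cp^m$; since $\Cp^m$ is divisible, it splits off as a summand of $\Cp^{\ell_p(A)}$, producing surjections $S \twoheadrightarrow \Cp^m$ and $A/S \twoheadrightarrow \Cp^{\ell_p(A)}/\phi(S) \cong \Cp^{\ell_p(A)-m}$, which together yield the desired inequality. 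Combining $\ell_p(S) = 1$, $\ell_p(A) \ge 2$ (since $p$ is critical), and $\kappa_p(A/S) = 0$, I deduce $\tau_p(A/S) = \ell_p(A/S) \ge 1$, so $S$ is strongly $p$-critical. This is the step I expect to be the main obstacle, since the subadditivity of $\ell_p$ is nowhere formally isolated earlier in the paper and must be proved here on the fly.

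For step three, Lemma \ref{LemCritCantor} now produces a Cantor set $\mathcal{K} \ni S$ in which every other element is parallel and non-commensurable to $S$. To check constancy of $\lambda_A$ and $d_A$ on $\mathcal{K}$, I would unpack the construction: setting $W := \Ker(S \twoheadrightarrow \Cp)$, every $K \in \mathcal{K}$ contains $W$, and $K/W$ is a copy of $\Cp$ inside $(S/W) \oplus P \le \Div(A/W)$. Injectivity of $\Cp$ splits both extensions $S = W \oplus (S/W)$ and $K = W \oplus (K/W)$, so $K \cong S$ as abstract groups and $\ell_q(K) = \ell_q(S)$ for every prime $q$. Writing $A/W = \Div(A/W) \oplus N$ with $N$ reduced, quotienting a divisible abelian group by any of its $\Cp$-subgroups produces an abstract group depending only on the multiplicities $\ell_q$ of the divisible part, whence $A/K \cong A/S$ as abstract groups. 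The intrinsic invariants $\gamma_V$, $\kappa_p$, $\tau_p$, $a_V$ therefore agree for $A/K$ and $A/S$, which combined with $K \cong S$ gives $\lambda_A(K) = \lambda_A(S) = 1$ and $d_A(K) = d_A(S)$, completing the proof.
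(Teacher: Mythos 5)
Your skeleton matches the paper's proof exactly: Lemma \ref{zeroboth} gives $a_{\Crit(A)}(A/S)=\kappa_{\Crit(A)}(A/S)=0$, hence $\ell_{\Crit(A)}(S)=1$; subadditivity of $\ell_p$ then gives $\tau_p(A/S)=\ell_p(A/S)\ge\ell_p(A)-\ell_p(S)\ge 1$ for the unique critical prime $p$ with $\ell_p(S)=1$, so $S$ is strongly $p$-critical and Lemma \ref{LemCritCantor} applies. Your on-the-fly proof of subadditivity is correct, and it is indeed the step the paper leaves implicit in the line ``$\tau_{\Crit(A)}(A/S)=\ell_{\Crit(A)}(A/S)\ge 1$''.

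There is, however, one genuinely false step in your verification that $\lambda_A$ and $d_A$ are constant on $\mathcal{K}$: the claim that ``injectivity of $\Cp$ splits both extensions $S=W\oplus(S/W)$ and $K=W\oplus(K/W)$''. Injectivity (divisibility) splits an extension in which the divisible group sits as a \emph{subgroup}, not as a quotient; the extension $0\to\Z\to\Z[1/p]\to\Cp\to 0$ does not split, and a subgroup $S$ with $\lambda_A(S)=1$ can perfectly well be isomorphic to $\Z[1/p]$. Consequently ``$K\cong S$ as abstract groups'' fails in general, and distinct members of $\mathcal{K}$ need not even be pairwise isomorphic. The damage is local: the only consequence you draw from $K\cong S$ is $\ell_q(K)=\ell_q(S)$ for all $q$, and that is precisely the parallelism already asserted in the conclusion of Lemma \ref{LemCritCantor} (alternatively, it follows from additivity of $\ell_q$ along $0\to W\to K\to\Cp\to 0$). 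With that substitution, your (correct) computation $A/K\cong A/S$ via $A/W=\Div(A/W)\oplus N$ does the rest, and the proof closes. The paper itself handles this final point even more cheaply, and only implicitly: parallel subgroups share a lattice $Z$ and have equal $\ell_q$, while every other term occurring in $\lambda_A$ and $d_A$ depends only on $A/Z$, by Lemma \ref{LemAQuot} and the identification of $(A/K)/T_{A/K}$ with $(A/Z)/T_{A/Z}$.
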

\proof By Lemma \ref{zeroboth}, we have $a_{\Crit(A)}(A/S)=\kappa_{\Crit(A)}(A/S)=0$.
So $\ell_{\Crit(A)}(S)=1$ and $\tau_{\Crit(A)}(A/S)=\ell_{\Crit(A)}(A/S) \ge 1$. Let $p$ be the critical prime such that
$\ell_p(S)=1$. Then $S$ is strongly $p$-critical. Thus we conclude
by Lemma \ref{LemCritCantor}.
\endproof

Let $A$ be a minimax group. For $n\ge 0$, define the subset $C_n(A)=\{S|\lambda_A(S)\ge 1,d_A(S)\ge n\}$. By upper semi-continuity (Lemma \ref{mapsusc2}), it is closed.

\begin{proposition}\label{cperfect}
Let $A$ be a minimax group and $n\ge 0$. The subset $C_n(A)\subset\SA$ is perfect.
\end{proposition}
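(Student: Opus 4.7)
The plan is to show that $C_n(A)$ is closed with no isolated points. Closedness is immediate from Lemma \ref{mapsusc2}, since both $\lambda_A$ and $d_A$ are upper semi-continuous and $C_n(A) = \{\lambda_A \ge 1\} \cap \{d_A \ge n\}$. To rule out isolated points, I would fix an arbitrary $S \in C_n(A)$ and exhibit elements of $C_n(A) \setminus \{S\}$ in every neighbourhood of $S$, splitting the argument according to the value of $\lambda_A(S)$.

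If $\lambda_A(S) = 1$, I would apply Lemma \ref{LemLambda1} directly: it places $S$ inside a Cantor set $\mathcal{K} \subset \SA$ all of whose points $H$ satisfy $\lambda_A(H) = 1$ and $d_A(H) = d_A(S) \ge n$. Hence $\mathcal{K} \subset C_n(A)$, and since $\mathcal{K}$ is perfect, $S$ is not isolated there, hence not in $C_n(A)$.

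If instead $\lambda_A(S) \ge 2$, I would apply Lemma \ref{lambda1} with $m = 1$ to obtain $H \in \SA$ with commensurable convergence $H \to S$, $\lambda_A(H) = 1$, and $d_A(H) = d_A(S) \ge n$. Since $\lambda_A$ and $d_A$ are commensurability invariants, the entire commensurability class of $H$ lies in $C_n(A)$; commensurable convergence means this class accumulates at $S$; and the strict inequality $\lambda_A(H) = 1 < \lambda_A(S)$ ensures that no member of the class equals $S$, yielding the desired approximating sequence. The only point requiring vigilance throughout is this distinctness verification, and it is handled uniformly by using $\lambda_A$ as a separating invariant: perfectness of the Cantor set in the first case, and the strict drop of $\lambda_A$ along the approximants in the second.
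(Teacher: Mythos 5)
Your proposal is correct and uses the same two ingredients as the paper (Lemma \ref{lambda1} with $m=1$ and Lemma \ref{LemLambda1}); the paper merely packages your two cases as the single statement that $C_n^1(A)=\{S\in C_n(A)\mid\lambda_A(S)=1\}$ is a dense perfect subset of the closed set $C_n(A)$. Your explicit check that the approximants differ from $S$ (via $\lambda_A$ as a commensurability invariant) is a worthwhile detail the paper leaves implicit.
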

\proof
By Lemma \ref{lambda1} with $m=1$, its subset $C_n^1(A)=\{S\in C_n|\lambda_A(S)=1\}$ is dense in $C_n(A)$. By Lemma \ref{LemLambda1}, $C_n^1(A)$ is perfect, so $C_n(A)$ is perfect as well. 
\endproof

\begin{proposition}\label{emptint}
Let $A$ be a minimax group and $n\ge 0$. Then $C_{n+1}(A)$ has empty interior in $C_n(A)$.
\end{proposition}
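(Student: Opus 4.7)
The plan is to show that every neighbourhood $V$ of any $S\in C_{n+1}(A)$ meets $C_n(A)\setminus C_{n+1}(A)$. Unpacking the definitions, a subgroup $H$ lies in this difference precisely when $\lambda_A(H)\ge 1$ and $d_A(H)=n$. Since both $\lambda_A$ and $d_A$ are constant on commensurability classes, it will suffice to produce, for each such $S$, a subgroup $H\le A$ satisfying these two equalities and with commensurable convergence $H\to S$: the commensurability class of such an $H$ then meets $V$ in a subgroup $H'$ which automatically belongs to $C_n(A)\setminus C_{n+1}(A)$.

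The workhorse is Lemma~\ref{lambda1more}: applied to $S$, which by hypothesis satisfies $\lambda_A(S)\ge 1$ and $d_A(S)\ge n+1$, it produces, for any integer $m$ with $1\le m\le d_A(S)$, a subgroup $H$ with commensurable convergence $H\to S$, $\lambda_A(H)=1$ and $d_A(H)=m$. For $n\ge 1$ I would simply take $m=n$ and obtain the required $H$ at once.

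The only step that needs a little care is the edge case $n=0$, which is the main (minor) obstacle, because Lemma~\ref{lambda1more} only produces subgroups with $d_A\ge 1$. To handle it, I would first invoke Lemma~\ref{lambda1more} with $m=1$ to obtain $H_1$ with $\lambda_A(H_1)=1$, $d_A(H_1)=1$ and commensurable convergence $H_1\to S$, and then apply Lemma~\ref{LemDIsHered} to $H_1$ (whose hypotheses $\lambda_A\le 1$ and $d_A\ge 1$ are both satisfied) to produce $H$ with $\lambda_A(H)=1$, $d_A(H)=0$ and commensurable convergence $H\to H_1$. Transitivity of commensurable convergence, Corollary~\ref{cctrans}, then yields commensurable convergence $H\to S$, completing the argument.
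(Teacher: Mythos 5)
Your proof is correct and follows essentially the same route as the paper: the paper's own proof consists precisely of applying Lemma~\ref{lambda1more} to get $H$ with commensurable convergence $H\to S$, $\lambda_A(H)=1$ and $d_A(H)=n$, and then noting that $H$ and its commensurability class lie in $C_n(A)\setminus C_{n+1}(A)$. Your extra treatment of the case $n=0$ (via Lemma~\ref{LemDIsHered} and Corollary~\ref{cctrans}) is a welcome refinement, since the paper invokes Lemma~\ref{lambda1more} there even though that lemma is only stated for $1\le n\le d_A(S)$.
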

\proof Let $S$ belong to $C_{n+1}(A)$. By Lemma \ref{lambda1more} there exists $H\in\SA$ with commensurable convergence $H\to S$, $\lambda_A(H)=1$ and $d_A(H)=n$. So $H$ and its commensurable subgroups belong to $C_n(A)$ but not to $C_{n+1}(A)$, and therefore $S$ is not in the interior of $C_{n+1}(A)$.
\endproof

\subsection{Maximal values}

\begin{lemma}
Let $A$ be a minimax group. On the set $I=\{S|\lambda_A(S)=0\}$, the maximal value of $w_A(=d_A)$ is $\gamma_{\Crit(A)}(A)+\ell_{\Ncrit(A)}(A)$.\label{maxsigma}
\end{lemma}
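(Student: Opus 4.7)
The plan is to verify the inequality $d_A(S) \le \gamma_{\Crit(A)}(A) + \ell_{\Ncrit(A)}(A)$ on all of $\SA$ (which gives one half of the claim on $I$) and then exhibit an $S \in I$ attaining it.

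For the upper bound, I would start from the identity
\[
d_A(S) = \gamma_{\Crit(A)}(A/S) + \ell_{\Ncrit(A)}(A) - \tau_{\Ncrit(A)}(A/S)
\]
recorded in the proof of Lemma \ref{mapsusc2}. Since $\gamma_{\Crit(A)}(A/S) \le \gamma_{\Crit(A)}(A)$ by Lemma \ref{LemAQuot} and $\tau_{\Ncrit(A)}(A/S) \ge 0$, the bound follows at once.

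For attainability, the natural first candidate is $S = T_A$. Using the identity above together with $\gamma_{\Crit(A)}(A/T_A) = \gamma_{\Crit(A)}(A)$ (Lemma \ref{LemAQuot}, applicable since $T_A$ is torsion) and $\tau_{\Ncrit(A)}(A/T_A) = 0$ (since $A/T_A$ is torsion-free), one computes $d_A(T_A) = \gamma_{\Crit(A)}(A) + \ell_{\Ncrit(A)}(A)$. However, $T_A$ itself need not lie in $I$, since typically $\lambda_A(T_A) = a_{\Crit(A)}(A) + \ell_{\Crit(A)}(A)$ is positive. To remedy this, I would apply Lemma \ref{lambda1} with the subgroup $T_A$ and $m = 0$ (the hypothesis $m \le \lambda_A(T_A)$ being trivial): this delivers an $H \in \SA$ with $\lambda_A(H) = 0$, so $H \in I$, and $d_A(H) = d_A(T_A)$, which is precisely the maximum sought.

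Once the expression for $d_A$ from Lemma \ref{mapsusc2} is at hand the calculations are essentially formal; the only conceptual point to notice is that the most natural maximizer $T_A$ may lie outside $I$, and that Lemma \ref{lambda1} is exactly the device that lets one slide across commensurability classes into $I$ without perturbing the value of $d_A$.
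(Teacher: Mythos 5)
Your proposal is correct and follows essentially the same route as the paper: the same identity $d_A(S)=\gamma_{\Crit(A)}(A/S)+\ell_{\Ncrit(A)}(A)-\tau_{\Ncrit(A)}(A/S)$ for the upper bound, sharpness at $S=T_A$, and Lemma \ref{lambda1} to produce a maximizer inside $I$. No gaps.
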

\proof
We have $$d_A(S)=\gamma_{\Crit(A)}(A/S)+\ell_{\Ncrit(A)}(A)-\tau_{\Ncrit(A)}(A/S)\le \gamma_{\Crit(A)}(A)+\ell_{\Ncrit(A)}(A)$$
and
$$\lambda_A(S)=a_{\Crit(A)}(A/S)+\ell_{\Crit(A)}(A)-\tau_{\Crit(A)}(A/S)\le a_{\Crit(A)}(A)+\ell_{\Crit(A)}(A).$$
Both inequalities are sharp, as they become equalities when $S=T_A$.

Moreover by Lemma \ref{lambda1}, we get the existence of a subgroup $S$ of $A$ with $\lambda_A(S)=0$ and $d_A(S)=\gamma_{\Crit(A)}(A)+\ell_{\Ncrit(A)}(A)$.
\endproof

\subsection{Conclusion in the non-scattered case}

Define $$\sigma(A)=\gamma_{\Crit(A)}(A)+\ell_{\Ncrit(A)}(A)=h(A)-(a_{\Crit(A)}(A)+\ell_{\Crit(A)}(A)).$$

\begin{theorem}\label{thm:nonscat}
Let $A$ be a minimax group. If $S\in\SA$, then
\begin{itemize}
\item $S$ belongs to the scattered part of $\SA$ if and only if $\lambda_A(S)=0$;
\item the extended Cantor-Bendixson rank of $S$ in $\SA$ is $d_A(S)$;
\item the Cantor-Bendixson rank of $\SA$ is $\sigma(A)+1$.
\end{itemize}
If moreover $A$ has at least one critical prime, then $\SA$ is homeomorphic to $D^{\sigma(A)}\times W$.
\end{theorem}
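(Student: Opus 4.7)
The plan is to combine everything established so far via Lemma~\ref{carcb}, applied to the partition $\SA = I \cup C$ with $I = \{S : \lambda_A(S) = 0\}$ and $C = \{S : \lambda_A(S) \geq 1\}$, and with candidate rank function $f = d_A$. Each of the four hypotheses has been separately prepared in the preceding sections: $C = C_0(A)$ has no isolated point by Proposition~\ref{cperfect}; $d_A$ is upper semi-continuous on $\SA$ by Lemma~\ref{mapsusc2} and $I$-hereditary by Proposition~\ref{PropHereditybis}; and on $I$, where $d_A = w_A$, it is strictly upper semi-continuous by Proposition~\ref{LemWscsX0} and strictly hereditary by Proposition~\ref{PropHeredity0}. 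Invoking Lemma~\ref{carcb} then yields $\mathcal{I}(\SA) = I$, $\Cond(\SA) = C$, and $d_A = \ovCB_{\SA}$ on $\overline{I}$.

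To extend this identity to all of $\SA$, I would observe that every lattice $Z$ in $A$ lies in $I$ (being finitely generated with $A/Z$ Artinian forces $\ell_{\Crit(A)}(Z) = \kappa_{\Crit(A)}(A/Z) = a_{\Crit(A)}(A/Z) = 0$), and lattices are dense in $\SA$ by Proposition~\ref{extreme_com_class}; hence $\overline{I} = \SA$. This settles the first two bullets, and the third follows by reading off $\max d_A = \sigma(A)$ from Lemma~\ref{maxsigma}, which forces $\CB(\SA) = \sigma(A) + 1$.

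For the last assertion, assume $A$ has a critical prime; then $\SA$ is a metrizable Boolean space since minimax abelian groups are countable, and the plan is to apply Proposition~\ref{PropHomeo}. The identifications above make the set $C_i(\SA)$ of Proposition~\ref{PropHomeo} coincide with the previously defined $C_i(A) = \{S : \lambda_A(S) \geq 1,\ d_A(S) \geq i\}$. For $0 \leq i \leq \sigma(A)$, each $C_i(A)$ is closed by upper semi-continuity of $\lambda_A$ and $d_A$, perfect by Proposition~\ref{cperfect}, totally disconnected metrizable, and non-empty: a direct bookkeeping calculation gives $d_A(T_A) = \sigma(A)$ and $\lambda_A(T_A) \geq \ell_{\Crit(A)}(A) \geq 2$, so $T_A \in C_i(A)$. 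Hence $C_i(A)$ is a Cantor space by the standard characterization. The empty-interior condition is Proposition~\ref{emptint}, and Proposition~\ref{PropHomeo} then delivers $\SA \cong D^{\sigma(A)} \times W$. No single step should present a serious obstacle; the most delicate bookkeeping will be the density of $I$ via the lattice calculation, and the final check that $T_A$ realizes the maximum of $d_A$ while keeping $\lambda_A \geq 1$ in the critical case.
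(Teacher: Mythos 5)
Your proposal is correct and follows essentially the same route as the paper: apply Lemma~\ref{carcb} to $I=\{\lambda_A=0\}$, $C=\{\lambda_A\ge 1\}$ with $f=d_A$, citing Proposition~\ref{cperfect}, Lemma~\ref{mapsusc2} and Propositions~\ref{PropHereditybis}, \ref{LemWscsX0}, \ref{PropHeredity0} for the hypotheses, then Lemma~\ref{maxsigma} for the rank and Proposition~\ref{PropHomeo} (with $T_A\in C_{\sigma(A)}$ and Proposition~\ref{emptint}) for the homeomorphism type. The extra details you supply — the density of $I$ via lattices and the identification $C_i(\SA)=C_i(A)$ — are points the paper leaves implicit, so nothing is different in substance.
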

\proof
If $I=\{\lambda_A=0\}$ and $C=\{\lambda_A\ge 1\}$, then by Proposition \ref{cperfect} (for $n=0$), Lemma \ref{mapsusc2}, Propositions \ref{PropHereditybis}, \ref{LemWscsX0} and \ref{PropHeredity0}, $I$, $C$ and $d_A$ satisfy the hypotheses of Lemma \ref{carcb}. This settles the first two assertions, and the third follows from Lemma \ref{maxsigma}.

For the last statement, we appeal to Proposition \ref{PropHomeo}, with $C_i=C_i(A)$ as above. By Proposition \ref{cperfect}, $C_i$ is perfect. As $d_A(T_A)=\sigma(A)$ and $\lambda_A(T_A)=a_{\Crit(A)}(A)+\ell_{\Crit(A)}(A) \ge 1$  (see the proof of Lemma \ref{maxsigma}), we have $T_A\in C_{\sigma(A)}$. Thus $C_i$ is homeomorphic to the Cantor set for all $i\le\sigma(A)$. Finally $C_{i+1}$ has empty interior in $C_i$ for all $i$, by Proposition \ref{emptint}. 
\endproof

\subsection{Example: Artinian groups}

For an Artinian group $A$, the invariant $\gamma_V$ vanishes for every set of primes $V$. Thus, by definition

$$\sigma(A)=\ell_{\Ncrit(A)}(A),$$
and, for $S\in\SA$,
$$w_A(S)=\ell(S);\quad \lambda_A(S)=\ell_{\Crit(A)}(S);\quad d_A(S)=\ell_{\Ncrit(A)}(S).$$

The properties of these maps established above are even easier to obtain in this particular case. Indeed, if $A$ is decomposed as a direct sum of its $p$-components: $A=\bigoplus A_p$, then $\SA=\bigoplus\mathcal{S}(A_p)$.
Then, given that $W\times [n]$ for $n\ge 1$ and $W\times W$ are homeomorphic to $W$, we are reduced to study $\SA$ when $A$ is an Artinian $p$-group.

If $\ell(A)=0$ then $A$ is finite and $\SA$ is homeomorphic to $[n]$ for $n=n(A)$, the number of subgroups of $A$.

If $\ell(A)=1$ then finite subgroups of $A$ are isolated, and form a dense subset, while there are only finitely many infinite subgroups, namely finite index subgroups of $A$. If there are $n$ many such subgroups, then $\SA$ is homeomorphic to $D\times [n]$.

If $\ell(A)=\ell_p(A)\ge 2$, again finite subgroups form a dense subset consisting of isolated points. If $C$ is the set of infinite subgroups, it is then closed. Now $C$ contains a dense subset, namely the set $L_1$ of subgroups $S$ with $\ell(S)=1$. The set $L_1$ is perfect, by Lemma \ref{LemLambda1}. So $C$ is perfect. Accordingly, $\SA$ is homeomorphic to $W$.

%
%
%
%
%
%
%
%
%
%
%
%
%
%
%
%
%
%
%
%
%
%
%
%
%
%
%
%
%
%
%
%
%
%
%

\bibliographystyle{alpha}

\baselineskip=16pt




\bigskip
\footnotesize

\end{document}